\ifdef{\crop}{%
\usepackage[includeheadfoot,twoside=False,paperwidth=448pt,paperheight=587pt,rmargin=15pt,lmargin=15pt,tmargin=15pt,bmargin=15pt]{geometry}%
}{%
\setlength{\topmargin}{20mm}
\addtolength{\topmargin}{-1in}
\setlength{\oddsidemargin}{27mm}
\addtolength{\oddsidemargin}{-1in}
\setlength{\evensidemargin}{27mm}
\addtolength{\evensidemargin}{-1in}
\setlength{\textwidth}{156mm}
\setlength{\textheight}{242mm}
}%
\newcolumntype{C}{>{\centering\arraybackslash}X} 
\theoremstyle{plain}
\newtheorem{thm}{Theorem}[section]
\newtheorem*{thm*}{Theorem}
\newaliascnt{prop}{thm}
\newaliascnt{cor}{thm}
\newaliascnt{lem}{thm}
\newaliascnt{claim}{thm}
\newaliascnt{defn}{thm}
\newaliascnt{ques}{thm}
\newaliascnt{conj}{thm}
\newaliascnt{fact}{thm}
\newaliascnt{rem}{thm}
\newaliascnt{ex}{thm}
\newtheorem{prop}[prop]{Proposition}
\newtheorem{cor}[cor]{Corollary}
\newtheorem{lem}[lem]{Lemma}
\newtheorem{claim}[claim]{Claim}
\newtheorem*{prop*}{Proposition}
\newtheorem*{cor*}{Corollary}
\newtheorem*{lem*}{Lemma}
\newtheorem*{claim*}{Claim}
\theoremstyle{definition}
\newtheorem{conj}[conj]{Conjecture}
\newtheorem*{defn*}{Definition}
\newtheorem*{ques*}{Question}
\newtheorem*{conj*}{Conjecture}
\newtheorem*{prob*}{Problem}
\newtheorem{rem}[rem]{Remark}
\newtheorem{ex}[ex]{Example}
\newtheorem*{fact*}{Fact}
\newtheorem*{rem*}{Remark}
\newtheorem*{ex*}{Example}
\def\textsectionN~{\textsection{}}
\renewcommand\phi{\varphi}
\renewcommand\epsilon{\varepsilon}
\renewcommand\leq{\leqslant}
\renewcommand\geq{\geqslant}
\newcommand{\set}{  \@ifstar{\@setstar}{\@set}}\newcommand{\@setstar}[2]{\{\, #1 \mid #2 \,\}}
\newcommand{\@set}[1]{\{ #1 \}}
\newcommand{\trans}[1][1]{\raisebox{#1ex}{\scriptsize\kern0.1em$t$\kern-0.1em}}
\DeclareMathOperator{\codim}{codim}
\DeclareMathOperator{\Proj}{Proj}
\DeclareMathOperator{\Pic}{Pic}
\DeclareMathOperator{\Sym}{Sym}
\DeclareMathOperator{\Ext}{Ext}
\DeclareMathOperator{\Char}{char}
\def\Z{\mathbb{Z}}
\def\Q{\mathbb{Q}}
\def\R{\mathbb{R}}
\def\C{\mathbb{C}}
\def\r+{\mathbb{R}_{\geq 0}}
\def\ep{\varepsilon}
\def\r+{{\R}_{\geq 0}}
\def\q+{{\Q}_{\geq 0}}
\def\P{\mathbb{P}}
\def\*c{\C^{\times}}
\def\dX{\widehat{X}}
\def\C{\mathbb {C}}
\def\K{\mathbb {K}}
\def\Q{\mathbb {Q}}
\def\R{\mathbb {R}}
\def\Z{\mathbb {Z}}
\newcommand{\cale}{\mathcal {E}}
\newcommand{\calf}{\mathcal {F}}
\newcommand{\calg}{\mathcal {G}}
\newcommand{\cali}{\mathcal {I}}
\newcommand{\calo}{\mathcal {O}}
\newcommand{\calp}{\mathcal {P}}
\title[M-regularity and linear systems on abelian varieties]{M-regularity of $\mathbb{Q}$-twisted sheaves and its application to linear systems on abelian varieties}
\author[A.~Ito]{Atsushi~Ito}
\address{Graduate School of Mathematics,
Nagoya University,
Nagoya, Japan}
\email{atsushi.ito@math.nagoya-u.ac.jp}
\subjclass[2010]{14C20,14K99}
\keywords{Abelian variety, M-regularity, Linear system}
\begin{document}

\maketitle

\begin{abstract}
G.~Pareschi and M.~Popa give criterions for global generations and surjectivity of multiplication maps of global sections of coherent sheaves on abelian varieties
in the theory of M-regularity.
In this paper,
we generalize some of their criterions via the M-regularity of $\mathbb{Q}$-twisted sheaves.
As an application,
we show that the M-regularity of a suitable $\Q$-twisted sheaf
implies property $(N_p)$ and jet-ampleness for ample line bundles on abelian varieties.
\end{abstract}

\section{Introduction}\label{sec_intro}

Throughout this paper we work over an algebraically closed field $\K$.
In \cite{MR1949161}, 
G.~Pareschi and M.~Popa introduce the notion of \emph{M-regularity} 
as follows:

For a coherent sheaf $\calf$ on an abelian variety $X$ defined over $\K$,
set 
\[
V^i(\calf) =  \{ \alpha \in \widehat{X} \, | \,  h^i(X,\calf \otimes P_{\alpha}) >0 \},
\]
where $\dX$ is the dual abelian variety of $X$ and $P_{\alpha}$ is the algebraically trivial line bundle on $X$ corresponding to $\alpha \in \dX$.
Then $\calf$ is said to be \emph{GV} if 
$\codim_{\widehat{X}} V^i(\calf) \geq i$ for any $i >0$.
It is said to be \emph{M-regular} if $\codim_{\widehat{X}} V^i(\calf) > i$ for any $i >0$.
It is said to be \emph{IT(0)} if $V^i(\calf) =\emptyset$ for any $i >0$.

In \cite{MR1949161}, \cite{MR2008719}, \cite{MR2807853}, etc.,
Pareschi and Popa develop the theory of M-regularity and give many applications.
M-regularity is useful since it implies suitable globally generation and surjectivity of multiplication maps of global sections,
as in the case of Castelnuovo-Mumford regularity on projective spaces.
Among others, they prove the following results:

\begin{thm}[{\cite[Theorem 6.3]{MR1949161}, \cite[Theorem 7.34]{MR2807853}}]\label{thm_Pareschi-Popa_criterion}
Let $A$ be an ample line bundle on an abelian variety $X$
and $\cale, \calf  $ be coherent sheaves on $X$.
\begin{enumerate}
\item If $\calf \otimes A^{-1}$ is M-regular, then $\calf$ is globally generated.
\item If $\calf \otimes A^{-2}$ is M-regular, then the natural map 
$H^0(A^n) \otimes H^0(\calf) \rightarrow H^0(A^n \otimes \calf) $ is surjective for any $n \geq 2$.
\item If $\cale, \calf$ are locally free and $\cale \otimes A^{-2}, \calf \otimes A^{-2}$ are M-regular,
then the natural map $H^0(\cale) \otimes H^0( \calf ) \rightarrow H^0(\cale \otimes \calf)$ is surjective.
\end{enumerate}
\end{thm}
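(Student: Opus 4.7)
The plan is to prove all three parts uniformly via the intermediate notion of \emph{continuous global generation} (CGG): a coherent sheaf $\calg$ on $X$ is CGG if for every non-empty open subset $U \subset \widehat{X}$ the sum of evaluation maps
\[
\bigoplus_{\alpha \in U} H^0(X, \calg \otimes P_\alpha) \otimes P_\alpha^{-1} \longrightarrow \calg
\]
is surjective. The core of the argument is a two-step bridge: (a) M-regularity implies CGG, and (b) a CGG sheaf twisted by an ample line bundle is globally generated.

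For (a), I would analyze the Fourier--Mukai transform $R\widehat{S}(\calg)$. The numerical M-regularity condition $\codim V^i(\calg) > i$ for $i>0$ translates, via cohomology-and-base-change, into a statement about the codimension of the supports of the higher direct images of $R\widehat{S}(\calg)$; dualising and applying Grothendieck--Serre duality then produces a torsion-free object on $\widehat{X}$. The cokernel of the CGG evaluation-sum restricted to any non-empty $U$ would, after Fourier--Mukai, yield a torsion subsheaf of this torsion-free object, forcing it to vanish. For (b), fix $x\in X$; since $A$ is ample, for general $\alpha \in \widehat{X}$ the ample line bundle $A \otimes P_\alpha^{-1}$ admits a section non-vanishing at $x$, and combined with a section of $\calg \otimes P_\alpha$ non-vanishing at $x$ furnished by CGG, the tensor product yields the desired section of $\calg \otimes A$. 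Part~(1) follows by applying (a) and (b) to $\calg = \calf \otimes A^{-1}$.

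For parts~(2) and~(3), I would upgrade the CGG mechanism to multiplication maps. The idea is that, via pullback along the difference map $X \times X \to X$, surjectivity of $H^0(\cale) \otimes H^0(\calf) \to H^0(\cale \otimes \calf)$ reduces to global generation of an auxiliary sheaf on $X \times X$ whose fibres involve twists $\calf \otimes P_\alpha^{-1}$ and $\cale \otimes P_\alpha$. A CGG-for-products statement, derived again from M-regularity of $\cale \otimes A^{-2}$ and $\calf \otimes A^{-2}$, then produces enough sections. For~(2) the special case $\cale = A^n$ removes the need for local freeness on that factor and uses only the hypothesis on $\calf \otimes A^{-2}$; for~(3) local freeness of both $\cale$ and $\calf$ is needed so that the relevant sheaf on $X \times X$ remains locally free and its generation on the diagonal can be analysed fibrewise.

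The main obstacle is step~(a): converting the purely numerical hypothesis on $\codim V^i(\calg)$ into the genuine surjectivity statement of CGG. This is the heart of Pareschi--Popa's original argument and requires the full Fourier--Mukai/Grothendieck duality machinery, together with the identification of the dual Fourier--Mukai transform of an M-regular sheaf as a torsion-free sheaf on $\widehat{X}$. A secondary difficulty in parts~(2) and~(3) is organizing the multiplication argument so that, after averaging over $P_\alpha$-twists, one can strip the twists to recover the ordinary multiplication map; this step is where local freeness of $\cale$ enters in~(3), ensuring that the relevant sheaves behave well under the Fourier--Mukai transform applied in the product direction.
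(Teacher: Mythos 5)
Your outline is precisely the Pareschi--Popa argument that the paper relies on for this theorem: M-regularity implies continuous global generation via the Fourier--Mukai/duality characterization, a continuously globally generated sheaf twisted by an ample line bundle is globally generated (using that $x \notin \Bs\lvert A\otimes P_\alpha^{-1}\rvert$ for general $\alpha$), and the multiplication statements reduce to global generation of the skew Pontrjagin product $p_{1*}((p_1+p_2)^*\cale\otimes p_2^*\calf)$ built on $X\times X$. The paper does not reprove the statement from scratch --- it quotes it from \cite{MR1949161,MR2807853} and notes that it is the integral special case ($A=L,\ x=1$; $L=A^n,\ x=1/n$; $s=t=2$) of \autoref{thm_criterion}, whose proof in \autoref{lemma_globally_generated} and \autoref{prop_surjectiveity_E,F} rests on the same two pillars you describe (the finite form of continuous global generation from \cite[Proposition~2.13]{MR1949161} and the Pontrjagin-product/Fourier--Mukai mechanism), so your approach is essentially the same.
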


Recently 
Z.~Jiang and G.~Pareschi \cite{MR4157109} extend 
the notions such as GV, M-regular, IT(0) to a \emph{$\Q$-twisted sheaf} $\calf \langle xl \rangle $,
where 
$x \in \Q$ and $l \in \Pic X /\Pic^0 X$ is the class of an ample line bundle $L$ (see \autoref{subsec_notion_GV} for the definition of $\Q$-twisted sheaves).
In \cite{MR4157109},
the authors also define an invariant $0 < \beta(l) \leq 1$, which is characterized as
\[
\beta(l) < x \quad  \Longleftrightarrow  \quad \cali_o \langle x l \rangle \text{ is IT(0)}
\]
for $x \in \Q$, 
where $ \cali_o \subset \calo_X$ is the maximal ideal corresponding to the origin $o \in X$.

The first purpose of this paper is to generalize \autoref{thm_Pareschi-Popa_criterion} to the $\Q$-twisted setting as follows:

\begin{thm}[{Propositions \ref{lemma_globally_generated}, \ref{prop_surjectiveity_E,F}}]\label{thm_criterion}
Let $L$ be an ample line bundle,
$\cale$ be a locally free sheaf, and $\calf  $ be a coherent sheaf on an abelian variety $X$.
Take 
$x \in \Q$ such that $x \geq \beta(l)$.
\begin{enumerate}
\item 
%
If $ \calf \langle -x l \rangle$ is M-regular,
then $\calf$ is globally generated.
\item If $x <1$ and $\calf \langle -\frac{x}{1-x} l\rangle$ is M-regular,  
then the natural map $H^0(L) \otimes H^0(\calf) \rightarrow H^0(L \otimes \calf)$ is surjective.
\item 
If there exist rational numbers $s,t >0$ such that 
$\cale  \langle -s l \rangle, \calf  \langle -t l \rangle$ are M-regular and $st/(s+t) \geq \beta(l) $,
then the natural map $H^0(\cale) \otimes H^0( \calf ) \rightarrow H^0(\cale \otimes \calf)$ is surjective.
\end{enumerate}
\end{thm}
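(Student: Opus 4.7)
The plan is to mimic the Pareschi--Popa strategy, systematically replacing their use of an ample line bundle $A$ by the $\Q$-twist $\calo\langle xl\rangle$ (respectively $\calo\langle sl\rangle,\calo\langle tl\rangle$). Two ingredients are needed: (a) a $\Q$-twisted analogue of continuous global generation (CGG) under which $\Q$-twisted M-regularity of $\calf\langle -xl\rangle$ implies the $\Q$-twisted CGG property; and (b) a tensor principle saying that a CGG $\Q$-twisted sheaf tensored with a second CGG $\Q$-twisted sheaf whose twist is the opposite sign yields an ordinary globally generated sheaf. Both are straightforward transcriptions of PP's Propositions~2.12--2.13 into the $\Q$-twisted language introduced by Jiang--Pareschi.

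For part~(1), after translation I reduce to generating $\calf$ at the origin $o$. Using the exact sequence $0\to\cali_o\otimes\calf\to\calf\to\calf|_o\to 0$, surjectivity of $H^0(\calf)\to\calf|_o$ is deduced from the decomposition $\cali_o\otimes\calf=(\cali_o\langle xl\rangle)\otimes(\calf\langle -xl\rangle)$ as a tensor of $\Q$-twisted sheaves with cancelling twists. The first factor is IT$(0)$ for $x>\beta(l)$ by the defining property of $\beta(l)$, while the second is M-regular, hence CGG, by hypothesis and~(a). Ingredient~(b) then forces $\calf$ to be globally generated at $o$. The boundary value $x=\beta(l)$ is recovered by a limit argument using that $V^i$ of a $\Q$-twisted sheaf is upper semicontinuous in the twist parameter, so M-regularity persists after a small increase in the twist.

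Parts~(2) and~(3) are handled through the product $X\times X$ with the difference morphism $d\colon X\times X\to X$, $(x,y)\mapsto x-y$. Since $\Delta=d^{-1}(o)$, surjectivity of $H^0(\cale)\otimes H^0(\calf)\to H^0(\cale\otimes\calf)$ translates to surjectivity of $H^0(\cale\boxtimes\calf)\to(\cale\boxtimes\calf)|_\Delta$. The exact sequence $0\to d^*\cali_o\to\calo_{X\times X}\to\calo_\Delta\to 0$ tensored with $\cale\boxtimes\calf$ reduces this to a CGG-type statement on $X\times X$, endowed with the polarization $s\,p_1^*l+t\,p_2^*l$. A direct computation via the isogeny $(p_1,d)\colon X\times X\to X\times X$ shows that $d^*\cali_o$ becomes IT$(0)$ with respect to this polarization precisely when $st/(s+t)\ge\beta(l)$; combined with the K\"unneth-type preservation of M-regularity for external products, this yields~(3). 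Part~(2) is then the specialization $\cale=L$: since $L$ is IT$(0)$, $L\langle -sl\rangle$ is IT$(0)$ for every $s<1$, and with $t=x/(1-x)$ one checks $st/(s+t)\to x\ge\beta(l)$ as $s\to 1^-$, so the hypothesis of~(3) is satisfied for $s$ close enough to $1$.

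The main technical hurdle will be the twist bookkeeping in~(3): identifying the behaviour of $d^*\cali_o$ as a $\Q$-twisted sheaf on $(X\times X,\,s\,p_1^*l+t\,p_2^*l)$ precisely enough to extract the harmonic-mean condition $st/(s+t)\ge\beta(l)$, and verifying that this is exactly the threshold for the IT$(0)$ property needed to run the PP-style argument. Once this identification is established, the remaining steps are essentially routine transcriptions of PP's CGG arguments into the $\Q$-twisted setting.
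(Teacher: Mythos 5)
Your part (1) follows the paper's strategy in spirit: the paper's Proposition \ref{lemma_globally_generated} is exactly a transcription of the Pareschi--Popa continuous-global-generation argument, carried out after pulling back by $\mu_b$ so that all objects become honest sheaves. However, your treatment of the boundary case $x=\beta(l)$ does not work. M-regularity of $\calf\langle -xl\rangle$ does not persist under a small increase of $x$ (a more negative twist can only enlarge the loci $V^i$; compare $\calo_X\langle \epsilon l\rangle$, which is IT(0) for $\epsilon>0$ but not even GV for $\epsilon<0$), and if instead you decrease $x$ you leave the range $x\geq\beta(l)$. The correct observation, which the paper uses, is that $x\geq\beta(l)$ already gives that $L^{ab}\otimes\mu_b^*\cali_p$ is GV, and GV is all the diagram chase needs, since one only has to pick the general $\alpha_j$ outside the proper closed subset $V^1$. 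Note also that one must work with the image $\cali_p{\cdot}\calf$ rather than $\cali_p\otimes\calf$, and that you cannot invoke Proposition \ref{prop_GV+IT(0)} for the pair $\cali_o\langle xl\rangle$, $\calf\langle -xl\rangle$, since neither factor is locally free; this is precisely why the explicit surjectivity argument is needed rather than the bare tensor principle.

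For parts (2) and (3) you propose a genuinely different route, via $X\times X$, the difference map $d$ and the polarization $s\,p_1^*l+t\,p_2^*l$, in place of the paper's skew Pontrjagin products combined with the Fourier--Mukai exchange of Proposition \ref{lemma_I_o_to_M_L}, under which $\calf\langle -yl\rangle$ is M-regular iff $\phi_l^*\Phi_{\calp}(\calf)\langle \frac1y l\rangle$ is, so that the harmonic mean $1/x=1/s+1/t$ appears for free after applying $\Phi_{\calp}(\cale * \calg)=\Phi_{\calp}(\cale)\otimes\Phi_{\calp}(\calg)$. Your route might be made to work, but as written it has two genuine gaps. First, the central claim --- that $d^*\cali_o=\cali_\Delta$ is IT(0) with respect to $s\,p_1^*l+t\,p_2^*l$ precisely when $st/(s+t)\geq\beta(l)$ --- is asserted rather than proved and is flagged by you as the main hurdle; at minimum the boundary behaviour is wrong as stated (a threshold of the form $\geq\beta$ characterizes GV, not IT(0)), and the same local-freeness obstruction as in (1) blocks a direct application of Proposition \ref{prop_GV+IT(0)} to $\cali_\Delta\otimes(\cale\boxtimes\calf)$. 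Second, the deduction of (2) from (3) by letting $s\to 1^-$ fails exactly at the boundary: with $t=x/(1-x)$ one computes $st/(s+t)=sx/(s(1-x)+x)<x$ for every $s<1$, so when $x=\beta(l)$ the hypothesis $st/(s+t)\geq\beta(l)$ is never satisfied. This is why the paper proves (2) separately: in the Fourier--Mukai picture $\phi_l^*\Phi_{\calp}(L)=H^0(L)\otimes L^{-1}$, so the $\cale=L$ factor contributes a trivial twist and only the M-regularity of $\calf\langle-\frac{x}{1-x}l\rangle$ is needed.
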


We note that $\calf \langle ml \rangle$ is M-regular if and only if so is $\calf \otimes L^{m}$ for $m \in \Z$.
Hence \autoref{thm_Pareschi-Popa_criterion} (1) is nothing but the case $A=L, x=1$ of \autoref{thm_criterion} (1).
Similarly, \autoref{thm_Pareschi-Popa_criterion} (2) follows from the case $L=A^n, x= 1/n$ of \autoref{thm_criterion} (2),
and \autoref{thm_Pareschi-Popa_criterion} (3) follows from the case $A=L, s=t=2$ of \autoref{thm_criterion} (3).

\vspace{2mm}
The second purpose of this paper is to study linear systems on abelian varieties by using \autoref{thm_criterion}
as in \cite{MR2008719}.
For an ample line bundle $A$ on an abelian variety $X$,
\begin{itemize}
\item $A^n$ is basepoint free if $n \geq 2$ (Lefschetz Theorem),
\item $A^n$ is projectively normal if $n \geq 3$ (\cite{MR0282975}, \cite{MR480543}),
\item the homogeneous ideal of $X$ embedded by $|A^n|$ is generated by quadrics if $n \geq 4$ (\cite{MR204427}, \cite{MR980300}).
\end{itemize}

As a generalization of these results,
R.~Lazarsfeld conjectures that $A^n$ satisfies \emph{property} $(N_p)$ if $n \geq p+3$ for $p \geq 0$. 
See \autoref{subsec_N_p} for the definition of $(N_p)$.
We just note here that 
($N_0$) holds for an ample line bundle $L$ if and only if $L$ defines a projectively normal embedding,
and ($N_1$) holds if and only if ($N_0$) holds and the homogeneous ideal of the embedding is generated by quadrics.

Lazarsfeld's conjecture is proved by Pareschi \cite{MR1758758} in 
$\mathrm{char} (\K)=0$.
In \cite{MR2008719},
Pareschi and Popa strengthen Lazarsfeld's conjecture
when $A$ has no base divisor, that is, when $\codim_X \mathrm{Bs} |A| \geq 2$, by using \autoref{thm_Pareschi-Popa_criterion}:

\begin{thm}[{\cite[Theorem 6.2]{MR2008719}}]\label{thm_Pareschi-PopaII}
Let $p \geq 1$ 
be an integer such that $\mathrm{char} (\K)$ does not divide $p+1$ and $p+2$. 
Let $A $ be an ample line bundle with no base divisor on an abelian variety $X$.
\begin{enumerate}
\item If $n \geq p+2$, then $A^n$ satisfies $(N_p)$.
\item More generally, if  $n \geq (p+r+2)/(r+1)$, 
 then $A^n$ satisfies $(N_p^r)$ for $r \geq 0$.
\end{enumerate}
\end{thm}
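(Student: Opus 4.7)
The plan is to reduce property $(N_p^r)$ for $L = A^n$ to a family of surjectivity statements for multiplication maps of global sections, and then establish those surjections by invoking \autoref{thm_Pareschi-Popa_criterion}~(3). The Koszul-cohomological reformulation of Green and Lazarsfeld gives the equivalence: letting $M_L$ be the kernel bundle defined by
\[
0 \to M_L \to H^0(L) \otimes \mathcal{O}_X \to L \to 0,
\]
property $(N_p^r)$ for $L$ is equivalent to surjectivity of
\[
H^0(L) \otimes H^0(\wedge^{p+1} M_L \otimes L^k) \twoheadrightarrow H^0(\wedge^{p+1} M_L \otimes L^{k+1})
\]
for every $k \geq r+1$, together with projective normality of $L$. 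The hypothesis that $\Char(\Bbbk)$ divides neither $p+1$ nor $p+2$ is precisely what ensures that $\wedge^{p+1} M_L$ (rather than a divided power) correctly computes the relevant Koszul cohomology.

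Apply \autoref{thm_Pareschi-Popa_criterion}~(3) with the ample line bundle $A$ (not $A^n$), locally free sheaves $\mathcal{E} = L = A^n$ and $\mathcal{F} = \wedge^{p+1} M_L \otimes L^k$. The hypothesis $\mathcal{E} \otimes A^{-2} = A^{n-2}$ M-regular is automatic via IT(0), since $A^{n-2}$ is ample whenever $n \geq 3$, and the numerical condition $n(r+1) \geq p+r+2$ with $p \geq 1$ forces $n \geq 3$ (residual boundary cases are handled directly via part (2) of the same theorem). It therefore suffices to verify that
\[
\mathcal{F} \otimes A^{-2} \;=\; \wedge^{p+1} M_{A^n} \otimes A^{nk-2}
\]
is M-regular for every $k \geq r+1$.

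This last M-regularity statement is the heart of the argument. One proceeds by induction on $p$, tensoring the defining sequence of $M_{A^n}$ with $\wedge^p M_{A^n} \otimes A^{nk-2}$ and cohomologically relating the $V^i$-loci of $\wedge^{p+1} M_{A^n} \otimes A^{nk-2}$ to those of $\wedge^p M_{A^n} \otimes A^{n(k+1)-2}$; each step uses the IT(0) property of suitable twists of $A^n$. The no-base-divisor hypothesis enters decisively at the base case $p = 0$: the codimension of $\mathrm{Bs}|A|$ in $X$ translates into codimension bounds for $V^i(M_{A^n} \otimes A^{nk-2})$ in $\widehat{X}$, producing the strict inequality $\codim V^i > i$ demanded by M-regularity. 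The main obstacle is carrying out this induction with sharp numerics: the precise bound $n(r+1) \geq p+r+2$ emerges from balancing, over $p+1$ inductive stages, the $-2$ twist arising from \autoref{thm_Pareschi-Popa_criterion}~(3) against the $+nk$ twist available at each stage, while maintaining the strict (rather than weak) codimension inequality at every intermediate sheaf.
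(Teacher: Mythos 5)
Your proposal does not follow the paper's proof at all: the paper deduces this theorem in a few lines from \autoref{thm_M-regular_is_enough}, using that ``$A$ has no base divisor'' is equivalent to the M-regularity of $\cali_o\langle a\rangle$ (\cite[Remark 3.6]{MR2008719}), that $\cali_o\langle xa\rangle$ then stays M-regular for all $x\geq 1$, hence $\cali_o\langle\frac{1}{p+2}l\rangle=\cali_o\langle\frac{n}{p+2}a\rangle$ is M-regular once $n\geq p+2$; the underlying Theorem~\ref{thm_M-regular_is_enough} works with tensor powers $M_L^{\otimes (p+1)}$ and the $\Q$-twisted criteria, and is characteristic-free. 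You instead attempt to re-derive the original Pareschi--Popa argument (wedge powers of $M_L$, the characteristic hypothesis to split $\wedge^{p+1}M_L$ off $M_L\otimes\wedge^{p}M_L$, and \autoref{thm_Pareschi-Popa_criterion}~(3)). That route can in principle be completed, but as written it has a gap exactly where all the work is.

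The M-regularity of $\wedge^{p+1}M_{A^n}\otimes A^{nk-2}$ \emph{is} the theorem, and your justification of its base case --- ``the codimension of $\Bs|A|$ in $X$ translates into codimension bounds for $V^i(M_{A^n}\otimes A^{nk-2})$'' --- names no mechanism for the translation. The base-divisor hypothesis directly controls only $V^1(\cali_o\otimes A)$, which is an isogeny image of $\Bs|A|$; converting that into statements about the kernel bundle $M_{A^n}$ twisted by the unrelated power $A^{nk-2}$ is precisely what requires the Fourier--Mukai/Pontrjagin-product machinery (in this paper, \autoref{lemma_I_o_to_M_L} combined with \autoref{lem_pushforward} and \autoref{prop_GV+IT(0)}; in Pareschi--Popa, the analogous integral statements), and none of it appears in your sketch. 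Two further points would need repair. First, the inductive step does not preserve ``$\codim V^i>i$'' for free: the connecting maps in the long exact sequence shift cohomological degree, so for $i=1$ one is led back to the surjectivity of multiplication maps twisted by arbitrary $P_\alpha$ --- the circle has to be broken by the machinery above, and that is where the numerology $n(r+1)\geq p+r+2$ must be earned rather than announced. Second, your reduction asserts that $(N_p^r)$ requires projective normality of $L$; this is false for $r\geq 1$ (only $(N_0^r)$ is required) and would actually destroy the boundary cases of part~(2): for $p=r=1$ the theorem allows $n=2$, and $A^2$ need not be projectively normal even when $A$ has no base divisor (\autoref{ex_not_proj_normal}).
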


Property $(N_p^r)$ for $p,r \geq 0$ is introduced in \cite{MR1758758},
where it is proved that $A^n$ satisfies $(N_p^r)$ if $  n \geq (p+r+3)/(r+1)$ 
without the assumption on the base divisors of $A$.
See \autoref{subsec_N_p} for the definition of $(N_p^r)$.
We just note here that $(N_p^0)$ is equivalent to $(N_p)$ and
$(N_p^r) $ is a property of ``being off'' by $r$ from $(N_p)$.

On the other hand,
the following theorem by Jiang-Pareschi and F.~Caucci generalizes Lazarsfeld's conjecture  to 
ample line bundles which is  not necessarily multiples of another line bundles:

\begin{thm}[{\cite[Section 8]{MR4157109},\cite[Theorem 1.1]{MR4114062}}]\label{thm_JP_Caucci}
Let $L$ be an ample line bundle on an abelian variety $X$ and $p \geq 0$. 
Let $\cali_o \subset \calo_X$ be the maximal ideal sheaf corresponding to the origin  $o \in X$.
Then 
\begin{enumerate}
\item $\cali_o \langle xl \rangle $ is IT(0) for $ 1 < x \in \Q$, and
$\cali_o \langle l \rangle$ is IT(0) if and only if $L$ is basepoint free.
\item If $ \cali_o \langle \frac{1}{p+2} l \rangle$ is IT(0),
then $L$ satisfies $(N_p)$.
\end{enumerate}
\end{thm}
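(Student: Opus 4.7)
The proof splits into the two parts of the statement.

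For Part (1), the plan is to reduce the $\Q$-twisted IT(0) condition to a classical cohomology vanishing via isogeny pullback. Writing $x=a/b$ in lowest terms and using the multiplication-by-$b$ map $[b]\colon X\to X$, the $\Q$-twisted sheaf $\cali_o\langle xl\rangle$ is IT(0) if and only if the ordinary sheaf $\cali_{X[b]}\otimes L^{ab}$ is IT(0) on $X$, where $X[b]$ is the $b$-torsion subgroup scheme. Twisting the exact sequence $0\to\cali_{X[b]}\to\calo_X\to\calo_{X[b]}\to 0$ by $L^{ab}\otimes P_\alpha$ shows that IT(0) is equivalent to surjectivity of $H^0(X,L^{ab}\otimes P_\alpha)\to H^0(X[b],(L^{ab}\otimes P_\alpha)|_{X[b]})$ for every $\alpha\in\dX$, since the higher cohomologies of $L^{ab}\otimes P_\alpha$ vanish by Mumford's vanishing for ample line bundles on abelian varieties. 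For $x=1,b=1$ this is exactly the assertion that every translate $L\otimes P_\alpha$ has a section nonvanishing at the origin, which is equivalent to basepoint-freeness of $L$. For $x>1$, the inequality $ab>b^2$ gives extra ampleness, and the argument proceeds via a theta-group / Fourier--Mukai analysis of the restriction map, exploiting the identification of $X[b]$ as the kernel of the isogeny defining the $\Q$-twist.

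For Part (2), the plan is to combine the Koszul-cohomology characterization of $(N_p)$ with \autoref{thm_criterion}. Setting $M_L:=\ker(H^0(L)\otimes\calo_X\twoheadrightarrow L)$, the standard criterion (see \cite{MR1758758}, \cite{MR2008719}) identifies $(N_p)$ with the vanishings $H^1(X,\wedge^{p+1}M_L\otimes L^k)=0$ for all $k\geq 1$, equivalently the surjectivity of $H^0(L)\otimes H^0(\wedge^p M_L\otimes L^k)\to H^0(\wedge^p M_L\otimes L^{k+1})$ for $k\geq 1$ (granted the analogous statement for $p-1$). Setting $x=\frac{1}{p+2}$, one has $\frac{x}{1-x}=\frac{1}{p+1}$, so \autoref{thm_criterion}(2) reduces the required surjectivity to M-regularity of $(\wedge^p M_L\otimes L^k)\langle -\tfrac{1}{p+1}l\rangle$ for every $k\geq 1$. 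The bridge between the hypothesis on $\cali_o$ and this M-regularity is a Fourier--Mukai-type identification: $M_L$ arises, via a continuous-evaluation construction involving the Poincar\'e bundle on $X\times\dX$, as a pushforward associated to $\cali_o$, and iterating produces wedge powers of $M_L$ in terms of ideal sheaves on products of $X$. Under this dictionary, IT(0) of $\cali_o\langle\tfrac{1}{p+2}l\rangle$ transports to M-regularity of $(\wedge^p M_L\otimes L^k)\langle -\tfrac{1}{p+1}l\rangle$, closing the inductive loop.

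The principal obstacle is making the Fourier--Mukai dictionary in Part (2) precise and quantitatively correct. The numerology, whereby a twist by $\tfrac{1}{p+2}$ on the $\cali_o$ side corresponds to a twist by $\tfrac{1}{p+1}$ on the $\wedge^p M_L$ side, reflects the combinatorial interplay between the $p+1$ wedge factors and the single extra copy of $L$ used in the multiplication. The base case of projective normality ($p=0,k=1$) requires a direct verification of $H^1(X,M_L\otimes L)=0$ through the same Fourier--Mukai setup rather than through \autoref{thm_criterion}(2), since $\wedge^0 M_L=\calo_X$ is not itself M-regular.
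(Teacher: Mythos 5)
First, note that the paper does not prove \autoref{thm_JP_Caucci}: it is quoted from \cite[Section 8]{MR4157109} and \cite[Theorem 1.1]{MR4114062}, so there is no in-paper proof to match your argument against. Judged on its own, your outline has the right skeleton (reduce the $\Q$-twist to $\cali_{\mu_b^{-1}(o)}\otimes L^{ab}$, analyze the restriction map for the basepoint-free equivalence, and run the Koszul/$M_L$ machine for $(N_p)$), and your computation $\tfrac{x}{1-x}=\tfrac{1}{p+1}$ at $x=\tfrac{1}{p+2}$ is the correct numerology. But at the two places where the real content lives, you substitute a gesture for an argument.

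In part (1), the case $x=1$ is complete and correct, but for $x>1$ ``a theta-group / Fourier--Mukai analysis of the restriction map'' is not a proof: a direct attack on surjectivity of $H^0(L^{ab}\otimes P_\alpha)\to (L^{ab}\otimes P_\alpha)|_{X[b]}$ for arbitrary $a/b>1$ is hard and you give no mechanism. The actual proof is indirect: one first shows $\cali_o\otimes L$ is \emph{GV} (the same restriction-map analysis shows $V^1(\cali_o\otimes L)=\phi_l(\Bs|L|)\subsetneqq\dX$), and then invokes the nontrivial equivalence \ref{eq_GV_IT(0)} (GV plus any positive twist is IT(0), \cite[Theorem 5.2]{MR4157109}), or equivalently \autoref{prop_GV+IT(0)}(i) applied to $\mu_b^*(\cali_o\otimes L)\otimes L^{b(a-b)}$. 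That external input, resting on generic vanishing theory, is the whole point of the $x>1$ statement and is absent from your sketch.

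In part (2), the ``Fourier--Mukai dictionary'' that is supposed to transport IT(0) of $\cali_o\langle\tfrac{1}{p+2}l\rangle$ to the needed positivity of $\wedge^pM_L\otimes L^k$ is asserted, not established, and the mechanism you describe (iterated ideal sheaves on products of $X$ producing wedge powers of $M_L$) is not how the numerology arises. The correct bridge is the single identity $\phi_l^*\Phi_{\calp}(\cali_o\otimes L)\simeq M_L\otimes L^{-1}$ together with the transformation formula for cohomological rank functions under $\Phi_{\calp}$ --- exactly the paper's \autoref{lemma_I_o_to_M_L} --- which converts the hypothesis into: $M_L\langle\tfrac{1}{p+1}l\rangle$ is IT(0). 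The passage from one copy of $M_L$ to $M_L^{\otimes p+1}\otimes L^h$ (or its wedge-power summand) is then handled by the tensor-product stability \autoref{prop_GV+IT(0)}, giving $M_L^{\otimes p+1}\otimes L^h$ IT(0) for $h\geq 1$ and hence $(N_p)$ via \cite[Proposition 4.1]{MR4114062}; this route does not actually need \autoref{thm_criterion}(2) at all. You do correctly flag that the case $p=0$, $k=1$ escapes a naive application of \autoref{thm_criterion}(2) because $\calo_X$ is only GV and not M-regular, which is a genuine subtlety; but without \autoref{lemma_I_o_to_M_L} and \autoref{prop_GV+IT(0)} stated and used explicitly, both halves of your argument remain open.
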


\autoref{thm_JP_Caucci} gives a quick and characteristic-free proof of Lazarsfeld's conjecture as follows:
If $L=A^n$ for some $A$ and $n \geq 1$, then
$ \cali_o \langle \frac{1}{p+2} l \rangle  = \cali_o \langle \frac{n}{p+2} a \rangle $ is IT(0) if $n/(p+2) >1$ by \autoref{thm_JP_Caucci} (1).
Hence 
$A^n$ satisfies $(N_p)$ if $n \geq p+3$ by \autoref{thm_JP_Caucci} (2).
Furthermore, the proof of 
\autoref{thm_JP_Caucci} shows that $L$ satisfies $(N_p^r)$ for $r \geq 0$ if $\cali_o \langle \frac{r+1}{p+r+2} l \rangle $ is IT(0).

Applying \autoref{thm_criterion},
we obtain the following theorem,
which contains \autoref{thm_Pareschi-PopaII} and
the case $p \geq 1$ of \autoref{thm_JP_Caucci} (2):

\begin{thm}\label{thm_M-regular_is_enough}
Let $L$ be an ample line bundle on an abelian variety $X$ and  $p \geq 1$.
\begin{enumerate}
\item 
If $\cali_o \langle \frac{1}{p+2} l \rangle $ is M-regular, then $L$ satisfies $(N_p)$.
\item 
More generally, if $\cali_o \langle \frac{r+1}{p+r+2} l \rangle $ is M-regular, then $L$ satisfies $(N_p^r)$ for $r \geq 0$.
\end{enumerate}
\end{thm}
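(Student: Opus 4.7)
The plan is to reduce property $(N_p^r)$ to a single multiplication-map surjectivity and then apply the new criterion \autoref{thm_criterion}(2) with a carefully chosen $\Q$-twist of the kernel bundle. First, the hypothesis for $p=r=0$ implies, via \autoref{thm_criterion}(1) applied to $\cali_o$, that $L$ is basepoint free, so the kernel bundle
$$0 \to M_L \to H^0(L) \otimes \calo_X \to L \to 0$$
is well defined. Property $(N_p^r)$ is equivalent to the surjectivity of the multiplication maps $H^0(L) \otimes H^0(\wedge^i M_L \otimes L^j) \to H^0(\wedge^i M_L \otimes L^{j+1})$ for all $0 \leq i \leq p$ and $j \geq r+1$. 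By induction on $i$ together with the analogous surjectivities for lower $j$, it suffices to treat the top case $i=p$, $j=r+1$.

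Next, apply \autoref{thm_criterion}(2) with $\calf = \wedge^p M_L \otimes L^{r+1}$ and $x = \frac{r+1}{p+r+2}$. The hypothesis that $\cali_o\langle xl\rangle$ is M-regular forces $\beta(l) \leq x$, and $x < 1$ since $p \geq 1$. A direct computation gives $x/(1-x) = (r+1)/(p+1)$, so the required surjectivity follows once we establish the M-regularity of
$$\calf\bigl\langle -\tfrac{r+1}{p+1}\,l\bigr\rangle \;=\; \wedge^p M_L\bigl\langle \tfrac{p(r+1)}{p+1}\,l\bigr\rangle.$$
Thus the entire problem is reduced to proving this single $\Q$-twisted M-regularity of $\wedge^p M_L$ from the assumed M-regularity of $\cali_o\langle \tfrac{r+1}{p+r+2}\,l\rangle$.

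To establish that M-regularity, I would induct on $p$, using either the Koszul-type exact sequence
$$0 \to \wedge^p M_L \to \wedge^p H^0(L) \otimes \calo_X \to \wedge^{p-1} M_L \otimes L \to 0$$
to propagate twisted M-regularity from $\wedge^{p-1} M_L$ to $\wedge^p M_L$, or the identification --- going back to Kempf --- of $M_L$ with the pushforward from $X \times X$ of the ideal sheaf of a translate of $\cali_o$, so that $\wedge^p M_L$ is governed by $\cali_o^{\boxtimes p}$ on $X^p$ via the summation map. The principal obstacle is that, unlike the IT(0) setting of \cite{MR4157109, MR4114062}, M-regularity is more delicate: it is not automatically preserved by box products, pushforwards along isogenies, or the long exact sequences of cohomology attached to the complexes above. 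Making the bookkeeping of exponents match exactly --- so that the input twist $\tfrac{r+1}{p+r+2}$ on $\cali_o$ produces precisely the needed twist $\tfrac{p(r+1)}{p+1}$ on $\wedge^p M_L$ --- will likely require a careful Fourier-Mukai analysis together with an inductive argument interleaving the vanishing statements at different stages.
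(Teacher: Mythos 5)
There is a genuine gap, and it sits exactly where you flag it yourself: the entire argument is reduced to the M-regularity of $\wedge^p M_L\bigl\langle \tfrac{p(r+1)}{p+1}l\bigr\rangle$, and then no proof of that statement is given --- the two routes you sketch (propagating M-regularity through the Koszul-type sequences, or through box products and pushforwards) are not carried out, and as you concede, M-regularity is not preserved by these operations, so it is not clear either one can be made to work. Ironically, you overlook the short route that the paper's own tools provide: by \autoref{lemma_I_o_to_M_L}, the hypothesis that $\cali_o\langle \tfrac{r+1}{p+r+2}l\rangle$ is M-regular is \emph{equivalent} to the M-regularity of $M_L\langle\tfrac{r+1}{p+1}l\rangle$, whence $M_L^{\otimes p}\langle\tfrac{p(r+1)}{p+1}l\rangle$ is M-regular by \autoref{prop_GV+IT(0)}~(ii); in characteristic zero $\wedge^pM_L$ is a direct summand of $M_L^{\otimes p}$ and your missing step would follow. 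But that rescue is characteristic-bound (the antisymmetrizer needs $p!$ invertible), as is your opening reduction of $(N_p^r)$ to multiplication maps on the wedge powers $\wedge^iM_L$ --- which is itself only asserted, and whose precise Koszul-cohomology form (the standard criteria are phrased via $H^1(\wedge^{p+1}M_L\otimes L^j)=0$, not via the maps $H^0(L)\otimes H^0(\wedge^iM_L\otimes L^j)\to H^0(\wedge^iM_L\otimes L^{j+1})$ you write) needs to be quoted or proved. The theorem, and the paper's proof, are characteristic-free.

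The paper avoids wedge powers entirely: it invokes Caucci's characteristic-free criterion that $(N_p^r)$ holds once $h^1(M_L^{\otimes p+1}\otimes L^h)=0$ for all $h\geq r+1$, converts the hypothesis into the M-regularity of $M_L\langle\tfrac{r+1}{p+1}l\rangle$ via \autoref{lemma_I_o_to_M_L}, and then does the real work in \autoref{lem_M^2}: upgrading the M-regularity of $M_L^{\otimes m}\langle\tfrac{m(r+1)}{p+1}l\rangle$ to IT(0) for $m\geq 2$ (this upgrade is itself an application of \autoref{thm_criterion}~(2), after pulling back by an isogeny). The upgrade is indispensable because at the boundary value $h=r+1$ the residual twist $L^{h}\langle-(r+1)l\rangle$ is only GV, not IT(0), so tensoring with a merely M-regular power of $M_L$ would not give the vanishing; this is also precisely where the hypothesis $p\geq 1$ (i.e.\ $m=p+1\geq 2$) enters, consistent with \autoref{rem_p=0}. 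Your proposal contains no counterpart to this boundary analysis, and without it --- or without a complete proof of the wedge-power M-regularity valid in all characteristics --- the argument does not close.
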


By \cite[Remark 3.6]{MR2008719},
$\cali_o \langle l \rangle$ is M-regular if and only if $L$ has no base divisor.
Hence \autoref{thm_M-regular_is_enough} gives a characteristic-free proof of \autoref{thm_Pareschi-PopaII}:
If $A$ has no base divisor and
$L=A^n$,
then 
$\cali_o \langle x a \rangle$ is M-regular for $x \geq 1$ and hence
$\cali_o \langle \frac{1}{p+2} l \rangle = \cali_o \langle \frac{n}{p+2} a \rangle$ is M-regular if $n/(p+2)  \geq 1$.
Thus 
\autoref{thm_Pareschi-PopaII} (1) holds in any characteristic.
Similarly, \autoref{thm_M-regular_is_enough} (2) recovers \autoref{thm_Pareschi-PopaII} (2) in any characteristic.



\vspace{2mm}

Other than $(N_p)$,
we can study \emph{jet-ampleness} via M-regularity as in \cite{MR2008719}.
Recall that a line bundle $L$ is called $k$-\emph{jet ample} for $k \geq 0$
if the restriction map 
\[
H^0 ( L ) \rightarrow H^0 ( L \otimes \calo_X/ \cali_{p_1}\cali_{p_2} \cdots \cali_{p_{k+1}}) 
\]
is surjective for any (not necessarily distinct) $k+1$ points $p_1,\dots,p_{k+1} \in X$.
In particular,
$0$-jet ampleness is equivalent to basepoint freeness and $1$-jet ampleness is equivalent to very ampleness.

For an ample line bundle $A$ on an abelian variety,
\cite{MR1439201} proves that $A^{n}$ is $k$-jet ample if $n \geq k+2$,
and the same holds if $A$ has no base divisor and $n \geq k+1 \geq 2$.
In \cite[Theorem 3.8]{MR2008719},
the authors generalize this result using the theory of M-regularity.
On the other hand, Caucci shows that $L $ is $k$-jet ample if $ \cali_o \langle \frac{1}{k+1} l \rangle$ is IT(0) \cite[Theorem D]{CaucciThesis},
which generalizes the first statement of the above result in \cite{MR1439201}. 
The following theorem generalizes the result in \cite{MR1439201} and improves \cite[Theorem D]{CaucciThesis}.
See \autoref{prop_jet_ampleness2} for a generalization of \cite[Theorem 3.8]{MR2008719}.

\begin{thm}\label{thm_jet-ample}
Let $L$ be an ample line bundle on an abelian variety $X$ and $k \geq 1$ be an integer.
If $ \cali_o \langle \frac{1}{k+1} l \rangle$ is M-regular,
then $L$ is $k$-jet ample.
\end{thm}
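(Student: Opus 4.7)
The plan is to proceed by induction on $k$. The first reformulation I would use is that, under the inductive assumption that $L$ is $(k-1)$-jet ample, $L$ is $k$-jet ample if and only if $L \otimes \cali_{p_1}\cdots\cali_{p_k}$ is globally generated for every $p_1,\dots,p_k \in X$. This follows from the short exact sequence
\[
0 \to L \otimes \cali_{p_1}\cdots\cali_{p_{k+1}} \to L \otimes \cali_{p_1}\cdots\cali_{p_k} \to L \otimes \bigl(\cali_{p_1}\cdots\cali_{p_k}/\cali_{p_1}\cdots\cali_{p_{k+1}}\bigr) \to 0
\]
together with the vanishings $H^1(L) = 0$ (ampleness on an abelian variety) and $H^1(L \otimes \cali_{p_1}\cdots\cali_{p_k}) = 0$ (from $(k-1)$-jet ampleness), which reduce the surjectivity of $H^0(L) \to H^0(L \otimes \calo_X/\cali_{p_1}\cdots\cali_{p_{k+1}})$ to the generation at $p_{k+1}$ of $L \otimes \cali_{p_1}\cdots\cali_{p_k}$ by its global sections.

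For the base case $k=1$, I would apply \autoref{thm_criterion}(1) to $\calf = L \otimes \cali_p$ with $x = \tfrac{1}{2}$: the twist $\calf\langle -\tfrac{1}{2}l\rangle = \cali_p\langle \tfrac{1}{2}l\rangle$ is M-regular by translation invariance of the hypothesis, and $x = \tfrac{1}{2} \geq \beta(l)$ since M-regularity of $\cali_o\langle \tfrac{1}{2}l\rangle$ forces $\beta(l) \leq \tfrac{1}{2}$. For the inductive step ($k \geq 2$), the hypothesis gives $\beta(l) \leq \tfrac{1}{k+1}$, so $\cali_o\langle \tfrac{1}{k}l\rangle$ is IT(0) (in particular M-regular), and by induction $L$ is $(k-1)$-jet ample. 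It then remains to prove global generation of $L \otimes \cali_{p_1}\cdots\cali_{p_k}$, for which I would again apply \autoref{thm_criterion}(1), this time with $\calf = L \otimes \cali_{p_1}\cdots\cali_{p_k}$ and $x = \tfrac{1}{k+1}$, reducing the goal to the M-regularity of $\cali_{p_1}\cdots\cali_{p_k}\langle \tfrac{k}{k+1}l\rangle$.

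The hard part will be establishing this last M-regularity: deducing the M-regularity of a product of $k$ translated ideal sheaves twisted by $\tfrac{k}{k+1}l$ from the single-point hypothesis on $\cali_o\langle \tfrac{1}{k+1}l\rangle$. My approach would be an auxiliary induction on $k$ using the short exact sequence
\[
0 \to T \to \cali_{p_1}\cdots\cali_{p_{k-1}} \otimes \cali_{p_k} \to \cali_{p_1}\cdots\cali_{p_k} \to 0,
\]
where $T$ is the kernel of the natural multiplication surjection, a torsion sheaf supported on the coincidences of $p_k$ with the other $p_i$'s, so that $H^i(T \otimes P_\alpha) = 0$ for all $i > 0$ and all $\alpha \in \widehat X$: this transfers M-regularity from the tensor product to the product ideal. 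The needed M-regularity of the tensor product $\cali_{p_1}\cdots\cali_{p_{k-1}} \otimes \cali_{p_k}$ (suitably $\mathbb{Q}$-twisted) should then follow by combining the inductive M-regularity of $\cali_{p_1}\cdots\cali_{p_{k-1}}$ with the single-factor M-regularity of $\cali_{p_k}\langle \tfrac{1}{k+1}l\rangle$, either via a direct $\mathbb{Q}$-twisted tensor-product principle or by a careful application of \autoref{thm_criterion}(3) relating the relevant multiplication maps; balancing the twist exponents so that they sum to $\tfrac{k}{k+1}$ is where the main technical care will be required.
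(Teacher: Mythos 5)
Your overall scheme --- adding one ideal sheaf of a point at a time, each addition costing $\frac{1}{k+1}$ of the twist, until you reach $\cali_{p_1}\cdots\cali_{p_{k+1}}\otimes L$ --- has exactly the shape of the paper's argument, and your reformulation of $k$-jet ampleness via global generation of $L\otimes\cali_{p_1}\cdots\cali_{p_k}$ is a legitimate variant of the equivalence the paper cites from Pareschi--Popa. The genuine gap is in the step you yourself flag as the hard part: producing the M-regularity of $\cali_{p_1}\cdots\cali_{p_k}\langle\frac{k}{k+1}l\rangle$ from the single-point hypothesis. Both tools you propose for it fail for the same reason. The tensor-product preservation statement (\autoref{prop_GV+IT(0)}) requires one of the two factors to be locally free, and neither $\cali_{p_1}\cdots\cali_{p_{k-1}}$ nor $\cali_{p_k}$ is locally free when $\dim X\geq 2$; this hypothesis is not cosmetic, as the Pareschi--Popa proof of that preservation theorem uses it essentially. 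Likewise \autoref{thm_criterion}(3) requires $\cale$ locally free, and in any case its conclusion is the surjectivity of a multiplication map on global sections, not the M-regularity of a tensor product, so it does not feed back into your induction. Your auxiliary exact sequence with the torsion kernel $T$ would indeed transfer M-regularity from $\cali_{p_1}\cdots\cali_{p_{k-1}}\otimes\cali_{p_k}$ to the product ideal, but you never actually obtain the M-regularity of that tensor product.

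The paper closes exactly this gap with \autoref{lemma_globally_generated}(i),(ii): if $\calf\langle yl\rangle$ is M-regular and $x\geq\beta(l)$, then $(\cali_p{\cdot}\calf)\langle(x+y)l\rangle$ is IT(0). The proof does not go through tensor products at all. It uses the characterization of M-regularity by continuous global generation (\cite[Proposition 2.13]{MR1949161}) to write $\mu_b^*\calf\otimes L^{ab}$ as a quotient of finitely many sheaves of the form $L^{ab}\otimes P_{\alpha_j}^{\vee}$ with $\alpha_j$ general, and uses the GV-ness of $L^{ab}\otimes\cali_{\mu_b^{-1}(p)}$ (this is where $x\geq\beta(l)$ enters) to see that each of these surjects onto its restriction to $\mu_b^{-1}(p)$ on global sections; combining the two gives the surjectivity of $H^0(\calf\otimes P_{\alpha})\to\calf\otimes P_{\alpha}|_{\mu_b^{-1}(p)}$, which is precisely the required IT(0) statement. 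With this proposition in hand the theorem is immediate by iterating $k$ times starting from the M-regularity of $\cali_{p_1}\langle\frac{1}{k+1}l\rangle$ (each output is IT(0), hence M-regular, so the proposition applies again), and one concludes by the Pareschi--Popa criterion that IT(0) of $\cali_{p_{k+1}}\cdots\cali_{p_1}\otimes L$ for all $(k+1)$-tuples of points is equivalent to $k$-jet ampleness. You would need to prove or cite something equivalent to \autoref{lemma_globally_generated}(ii) for your plan to go through.
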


\begin{rem}\label{rem_p=0}
We note that the statement of \autoref{thm_Pareschi-PopaII} (1) does not hold for $p=0$ as we will see \autoref{ex_not_proj_normal}.
Hence \autoref{thm_M-regular_is_enough} (1) also fails for $p=0$,
that is, 
the M-regularity of $\cali_{o} \langle \frac12 l \rangle$ does not imply the projective normality of $L$ in general.
However,
the M-regularity of $\cali_{o} \langle \frac12 l \rangle$ implies the very ampleness of $L$ by \autoref{thm_jet-ample}.
See \autoref{sec_proj_normal} for the relations between these notions.

\autoref{thm_jet-ample} fails for $k =0$ as well
since the M-regularity of $ \cali_o \langle  l \rangle$, which is equivalent to say that $L$ has no base divisor,
is strictly weaker than the $0$-jet ampleness $=$ basepoint freeness. 
\end{rem}

This paper is organized as follows. 
In \autoref{sec_preliminary}, we recall some notation.
In \autoref{sec_global_generation},
we show 
\autoref{thm_criterion} (1). 
In \autoref{sec_M_L},
we show \autoref{thm_criterion} (2), (3).
In \autoref{sec_jet-ample},
we prove \autoref{thm_jet-ample}.
We also give an alternative proof of a characterization of polarized abelian varieties whose Seshadri constants are one by M.~Nakamaye \cite{MR1393263}.
In \autoref{sec_proof of theorem},
we prove \autoref{thm_M-regular_is_enough}.
In \autoref{sec_proj_normal},
we study projective normality.

\subsection*{Acknowledgments}
The author would like to thank Dr.\ Federico Caucci for useful communication and valuable comments.
The author was supported by JSPS KAKENHI Grant Number 17K14162.

\section{Preliminaries}\label{sec_preliminary}

Throughout this paper,
$X$ is an abelian variety of dimension $g$.
We denote the origin of $X$ by $o_X$ or $o \in X$.
Let $L$ be an ample line bundle on $X$ and $l \in \mathrm{NS}(X)=\Pic (X) / \Pic^0 (X)$ be the class of $L$ in the Neron-Severi group.
Then we have an isogeny
\[
\phi_l : X \rightarrow \dX:= \Pic^0 (X),  \quad p \mapsto t_p^* L \otimes L^{-1}
\]
which depends only on the class $l$,
where $t_p : X \rightarrow X$ is the translation by $p$.

For $b \in \Z$, we denote the multiplication-by-$b$ isogeny by
\[
\mu_b=\mu^X_b : X \rightarrow X , \quad p \mapsto bp.
\]
It holds that $\mu_b^* l=b^2 l$.
Since $\phi_l$ is a group homomorphism,
we have $\phi_{bl} = \hat{\mu}_b \circ \phi_l =\phi_l \circ \mu_b $,
where $ \hat{\mu}_{b} $ is the multiplication-by-$b$ isogeny on $\dX$.

For two line bundles $L,L'$,
$L \equiv L'$ means that $L,L'$ are algebraically equivalent.

\subsection{Properties $(N_p)$ and $(N_p^r)$}\label{subsec_N_p}

Let $L$ be a basepoint free ample line bundle $L$ on a projective variety $Y$
and set 
$S_L:=\Sym H^0(Y,L)$.
Take a minimal free resolution of $R_L:=\bigoplus_{n \geq 0} H^0(Y, L^{n})$ as an $S_L$-module
\begin{align}\label{eq_free_resolution}
\cdots \rightarrow E_p \rightarrow \dots \rightarrow E_1 \rightarrow E_0 \rightarrow R_L \rightarrow 0 \quad \text{with} \quad E_i \simeq \bigoplus_{j} S_L(-a_{ij}).
\end{align}
Then 
$L$ is said to \emph{satisfy property} $(N_p) $ if $E_0=S_L$ and $a_{ij} = i+1$ for any $1 \leq i \leq p$ and any $j$.
For example, 
($N_0$) holds for $L$ if and only if $L$ defines a projectively normal embedding,
and ($N_1$) holds if and only if ($N_0$) holds and the homogeneous ideal of the embedding is generated by quadrics.

More generally,
$L$ is said to \emph{satisfy property} $(N_0^r) $ if $ a_{0j} \leq 1+r$ for any $j$ in \ref{eq_free_resolution}.
Inductively,
$L$ is said to \emph{satisfy property} $(N_p^r) $ if $L$ satisfies $(N_{p-1}^r)  $ and $ a_{pj} \leq p+1+r$ for any $j$.
For example, $(N_p^0)$ is equivalent to $(N_p)$
and $L$ satisfies $(N_0^r)$ if and only if $H^0(L) \otimes H^0(L^n) \rightarrow H^0(L^{n+1}) $ is surjective for $n \geq r +1$.
If $L$ is projectively normal,
$L$ satisfies $(N_1^r)$ if and only if the homogeneous ideal of $X$ embedded by $|L|$ is generated by homogeneous polynomials of degree at most $r+2$.

We note that this equivalence about $(N_1^r)$  and the degrees of generators of the homogeneous ideal
does not hold in general without assuming the projectively normality.
For example, there exists a polarized abelian surface $(X,A) $ of type $(1,2)$ over the complex number field $\C$ such that
$A$ has no base divisor, 
$A^{ 2}$ is very ample, and the homogeneous ideal of $X$ embedded by $|A^{2}|$ is generated by quadrics and quartics \cite[Remark 3]{MR3656291} (see also \cite[\S2]{MR946234}).
Since $A$ has no base divisor, $A^{ 2}$ satisfies $(N_1^1)$ by \autoref{thm_Pareschi-PopaII}.
However, the homogeneous ideal of $X$ is not generated by polynomials of degree at most $3$.

In particular,
we need to assume that $A^{2}$ is projectively normal in \cite[Theorem 6.1 (b)]{MR2008719}.

\subsection{Generic vanishing, M-regularity and IT(0) of $\Q$-twisted sheaves}\label{subsec_notion_GV}

Let $l \in \mathrm{NS}(X)$ be an ample class.
For a coherent sheaf $\calf$ on $X$ and $x \in \Q$,
a \emph{$\Q$-twisted coherent sheaf} $\calf \langle xl \rangle$ is the equivalence class of the pair $(\calf,xl)$,
where the equivalence is defined by
\[
(\calf \otimes L^{m} , xl) \sim (\calf  , (x+m)l)
\]
for any line bundle $L$ representing $l$ and any $m \in \Z$.

As explained in Introduction,
a coherent sheaf $\calf$ on $X$ is said to be \emph{GV} (resp.\ \emph{M-regular}, resp.\  \emph{IT(0)})
if $\codim_{\widehat{X}} V^i(\calf) \geq i$ (resp.\ $\codim_{\widehat{X}} V^i(\calf) > i$, resp.\ $V^i(\calf) =\emptyset$) for any $i >0$,
where
$
V^i(\calf) =  \{ \alpha \in \widehat{X} \, | \,  h^i(X,\calf \otimes P_{\alpha}) >0 \}.
$

In \cite{MR4157109}, 
these notions are extended to the $\Q$-twisted setting.
A $\Q$-twisted coherent sheaf $\calf \langle xl \rangle$ for $x=\frac{a}{b}$ with $b >0$ is said to be GV, M-regular, or IT(0) 
if so is $\mu_b^* \calf \otimes L^{ab}$.
This definition does not depend on the representation $x=\frac{a}{b}$ nor the choice of $L$ representing $l$.
We note that this is true even in $\mathrm{char} (\K) \geq 0$ by \cite[Remark 3.2]{MR4114062}.
Furthermore, for an isogeny $f : Y \rightarrow X$,
\begin{align}\label{eq_pullback}
\text{$\calf \langle x l \rangle$ is GV, M-regular, or IT(0) \ $\Longleftrightarrow$ \ so is $f^*(\calf \langle x l \rangle):=f^* \calf \langle x f^*l \rangle$}
\end{align}
by \cite[Proposition 1.3.3]{CaucciThesis} or arguments in \cite[Remark 3.2]{MR4114062}.
%
%

By \cite[Theorem 5.2]{MR4157109},
we also have an equivalence
\begin{align}\label{eq_GV_IT(0)}
\text{$\calf \langle xl \rangle$ is GV \ $\Longleftrightarrow$ \
$\calf \langle (x+x')l \rangle$ is IT(0) for any rational number $x' >0$.}
\end{align}

\begin{ex}\label{ex_line bundle}
For a line bundle $B$ on $X$,
$B \langle xl \rangle$ is M-regular if and only if $B \langle xl \rangle$ is IT(0) if and only if $B+xL$ is ample by \cite[Example 3.10 (1)]{MR2435838}.
Hence 
$B \langle xl \rangle$ is GV if and only if $B+xL$ is nef.
\end{ex}

\begin{ex}\label{ex_line bundle2}
For an ample line bundle $L$ on $X$,
$\cali_o \otimes L =\cali_o \langle l \rangle$ is GV, and 
$\cali_o \otimes L $ is IT(0) if and only if $ L$ is basepoint free
by \autoref{thm_JP_Caucci} (1).
By \cite[Remark 3.6]{MR2008719},
$\cali_o \otimes L $ is M-regular if and only if $L$ has no base divisor.
\end{ex}

For a polarized abelian variety $(X,l)$,
Jiang and Pareschi introduce an invariant $\beta(l) \in \R$. 
It is defined by using cohomological rank functions, which is also introduced by them, but
$\beta(l)$ is characterized by the notion IT(0) as follows:


\begin{lem}[{\cite[Section 8]{MR4157109},\cite[Lemma 3.3]{MR4114062}}]\label{lem_beta_IT(0)}
Let $(X,l)$ be a polarized abelian variety and $x \in \Q$.
Then $\beta(l) < x$ if and only if $\cali_p \langle xl \rangle$ is IT(0) for some (and hence for any) $p \in X$.
\end{lem}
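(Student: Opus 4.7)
My proof will split the equivalence into two pieces. The equivalence $\beta(l) < x \iff \cali_o\langle xl\rangle$ is IT(0) is precisely the characterization of $\beta(l)$ recalled in the introduction, established in \cite{MR4157109}, so I will take it as known. The only substantive statement left is translation-invariance: for any $p \in X$, I claim $\cali_p\langle xl\rangle$ is IT(0) if and only if $\cali_o\langle xl\rangle$ is IT(0), and I would prove this directly.

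To establish translation-invariance, I would write $x = a/b$ with $b > 0$ and choose $q \in X$ with $bq = p$, which exists since $\mu_b$ is surjective. The elementary identity $\mu_b \circ t_{-q} = t_{-p} \circ \mu_b$ on points, combined with $t_{-p}^*\cali_o = \cali_p$, yields
\[
\mu_b^* \cali_p \;=\; \mu_b^*(t_{-p}^*\cali_o) \;=\; t_{-q}^*(\mu_b^*\cali_o).
\]
Since $t_{-q}$ preserves numerical equivalence classes, the theorem of the square gives $t_{-q}^* L^{ab} \simeq L^{ab} \otimes P$ for some $P \in \Pic^0(X)$, and therefore
\[
\mu_b^*\cali_p \otimes L^{ab} \;\simeq\; t_{-q}^*\bigl(\mu_b^*\cali_o \otimes L^{ab}\bigr) \otimes P^{-1}.
\]

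By the very definition of IT(0) for a $\Q$-twisted sheaf, it now suffices to verify that the IT(0) property is preserved under (i) pullback along a translation and (ii) tensoring with a line bundle in $\Pic^0(X)$. For (i), any $P_\alpha \in \Pic^0(X)$ is translation-invariant, so $H^i(X, t^*\calg \otimes P_\alpha) = H^i(X, \calg \otimes P_\alpha)$ for any $\calg$, giving $V^i(t^*\calg) = V^i(\calg)$. For (ii), one has $V^i(\calg \otimes P_\beta) = V^i(\calg) - [\beta]$ in $\widehat{X}$, so emptiness of $V^i$ is preserved. Applying both observations to the displayed isomorphism would finish the proof.

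The only potential obstacle is the $\Pic^0$-bookkeeping between the $\mu_b$-pullback, the translation pullback, and the resulting twist; but each of these interactions is routine once the key identity $\mu_b \circ t_{-q} = t_{-p} \circ \mu_b$ is in hand, so I do not expect any serious difficulty.
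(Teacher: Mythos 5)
Your proposal is correct. Note that the paper itself gives no proof of this lemma: it is quoted verbatim from \cite[Section 8]{MR4157109} and \cite[Lemma 3.3]{MR4114062}, and within the paper the equivalence $\beta(l)<x \Leftrightarrow \cali_o\langle xl\rangle$ is IT(0) is simply the characterization of $\beta(l)$ recalled in the introduction. So the only content you actually need to supply is the translation-invariance, and your argument for it is sound: the identity $\mu_b\circ t_{-q}=t_{-p}\circ\mu_b$ with $bq=p$ gives $\mu_b^*\cali_p=t_{-q}^*(\mu_b^*\cali_o)$, the theorem of the square converts $t_{-q}^*L^{ab}$ into $L^{ab}$ up to a $\Pic^0$ twist, and both a translation pullback and a $\Pic^0$ twist merely fix or translate the loci $V^i$ inside $\widehat{X}$, hence preserve their emptiness. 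This is essentially the argument in the cited references; an equivalent shortcut available inside this paper would be to invoke \ref{eq_pullback} together with the observation that $t_{-q}$ is an automorphism commuting with the $\Pic^0$-action, but your direct computation is just as clean. One small point worth making explicit is that you are checking IT(0) for a single representation $x=a/b$, which suffices because the paper records that the $\Q$-twisted notions are independent of the chosen representation.
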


By \autoref{lem_beta_IT(0)} and \ref{eq_GV_IT(0)},
it also holds that
$\beta(l) \leq x$ if and only if $\cali_p \langle xl \rangle$ is GV for $x \in \Q$.

We note that \autoref{thm_JP_Caucci} is stated by using $\beta(l)$ in \cite{MR4157109},\cite{MR4114062},
but the original statement is equivalent to that in \autoref{thm_JP_Caucci} by \autoref{lem_beta_IT(0)}.

\begin{ex}\label{rem_beta=1/2}
If $\beta(l) < 1/2$, $L$ satisfies $(N_0)$ by \autoref{thm_JP_Caucci} (2).
Contrary to \autoref{thm_JP_Caucci} (1),
the converse does not hold in general.
For example, let $(X,A) $ be a general polarized abelian variety of type $(1,\dots,1,d)$ with $3 \leq d \leq g$ and set $L:=A^2$.
Since $h^0(A) =d \leq g$, $A$ is not basepoint free, which is equivalent to $\beta(a) =1$ by \autoref{thm_JP_Caucci} (1).
Hence we have $\beta(l) =\beta(a)/2 =1/2$.
On the other hand, $L$ is projectively normal by \cite{MR1638159}.

\end{ex}

\begin{ex}\label{ex_beta=1/2}
If $\beta(l) < 1/2$, 
$L$ satisfies not only $(N_0)$ but also $(N_1^1)$ as in the following paragraph of \autoref{thm_JP_Caucci}
(see also the proof of \autoref{thm_M-regular_is_enough} in \autoref{sec_proof of theorem}).
Hence the homogeneous ideal of $X$ embedded by $|L|$ is generated by quadrics and cubics if $\beta(l) < 1/2$.

It might be interesting to ask if the homogeneous ideal of $X$ is generated by quadrics and cubics 
under the condition that $L$ is projectively normal, which is weaker than the condition $\beta(l) < 1/2$.
At least, the answer is yes if $\dim X=2$ by \cite[Lemma 2.2]{MR3656291}.
The answer is yes as well when $L=A^2$ for some ample line bundle $A$.
In fact, $A^2$ is very ample if and only if 
$A$ has no base divisor by \cite{MR871633}.
Thus if $A^2$ is projectively normal, then $A$ has no base divisor and hence $A^2$ satisfies $(N_1^1)$ by \autoref{thm_Pareschi-PopaII} (2).
Therefore the homogeneous ideal of $X$ embedded by $|A^2|$ is generated by quadrics and cubics.
We note that \autoref{thm_Pareschi-PopaII} is true in any characteristic by \autoref{thm_M-regular_is_enough},
which we will prove in \autoref{sec_proof of theorem}.


\end{ex}

\vspace{1mm}
We will use the following proposition.

\begin{prop}[{\cite[Proposition 3.1]{MR2807853},\cite[Theorem 3.2]{MR2807853},\cite[Proposition 3.4]{MR4114062}}]\label{prop_GV+IT(0)}
Assume that $\calf$ and $\calg$ are coherent sheaves on $X$, and that one of them is locally free.
\begin{itemize}
\item[(i)] If $\calf \langle xl \rangle$ is IT(0) and $\calg \langle yl \rangle$ is GV,
then $\calf \langle xl \rangle \otimes \calg \langle yl \rangle := (\calf \otimes \calg )\langle (x+y)l \rangle$ is IT(0).
\item[(ii)] If $\calf \langle xl \rangle$ and $\calg \langle yl \rangle$ are M-regular,
then $\calf \langle xl \rangle \otimes \calg \langle yl \rangle $ is M-regular.
\end{itemize}

\end{prop}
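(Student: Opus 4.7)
The plan is to reduce the $\Q$-twisted statement to its classical integer counterpart, which is precisely \cite[Proposition 3.1]{MR2807853} and \cite[Theorem 3.2]{MR2807853} (namely: the tensor product of an IT(0) sheaf with a GV sheaf is IT(0), and the tensor product of two M-regular sheaves is M-regular, provided that one of the factors is locally free). Once the twists are reduced to integers, everything is a direct consequence of these Fourier--Mukai based results, together with the pullback equivalence \ref{eq_pullback}.

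First, I would choose a common denominator. Write $x = a/N$ and $y = c/N$ with $N > 0$ a positive integer. By the very definition of $\Q$-twisted GV/M-regular/IT(0) from \autoref{subsec_notion_GV}, the hypotheses in (i) are equivalent to saying that $\mu_N^{*}\calf \otimes L^{aN}$ is IT(0) and $\mu_N^{*}\calg \otimes L^{cN}$ is GV on $X$; similarly for (ii) both $\mu_N^{*}\calf \otimes L^{aN}$ and $\mu_N^{*}\calg \otimes L^{cN}$ are M-regular. Since $\mu_N$ is étale (an isogeny), local freeness is preserved under $\mu_N^{*}$, so exactly one of the two pulled-back twisted sheaves is locally free.

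Next, I would apply the classical untwisted statements from Pareschi--Popa to the pair
\[
\bigl(\mu_N^{*}\calf \otimes L^{aN},\ \mu_N^{*}\calg \otimes L^{cN}\bigr).
\]
In case (i), their tensor product
\[
\mu_N^{*}(\calf \otimes \calg) \otimes L^{(a+c)N}
\]
is IT(0); in case (ii) it is M-regular. But by definition the latter sheaf is a representative of the $\Q$-twisted sheaf $(\calf \otimes \calg)\langle \tfrac{a+c}{N}\, l\rangle = (\calf \otimes \calg)\langle (x+y)l\rangle$, and by \ref{eq_pullback} the property of being IT(0), respectively M-regular, descends from the $\mu_N$-pullback to the $\Q$-twisted sheaf itself. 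This yields the conclusion in both (i) and (ii).

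The only non-formal point is the classical integer case: the fact that IT(0)$\,\otimes\,$GV is IT(0) and M-regular$\,\otimes\,$M-regular is M-regular whenever one factor is locally free. This is genuinely the substantive input, since it requires the Fourier--Mukai transform together with the fact that twisting by an IT(0) line bundle is an exact functor preserving the relevant cohomological support conditions; I would simply quote it from \cite[Proposition 3.1, Theorem 3.2]{MR2807853} (and \cite[Proposition 3.4]{MR4114062} for the characteristic-free version), which is exactly the role this proposition plays as a black box in the rest of the paper. Thus the whole proof amounts to the observation that the reduction via $\mu_N$ is compatible both with tensor products and with the three cohomological conditions, which is immediate from the definitions.
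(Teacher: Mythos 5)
Your reduction is correct, and it is essentially the argument behind the result as it appears in the literature: the paper itself states this proposition as a quoted black box (it gives no proof, only the citations to \cite{MR2807853} and \cite[Proposition 3.4]{MR4114062}, the latter being exactly the $\Q$-twisted form). Clearing denominators with a single $\mu_N$, noting that pullback and twisting by $L^{aN}$, $L^{cN}$ preserve local freeness and are compatible with tensor products, and then invoking the untwisted Pareschi--Popa statements is precisely the intended route, so there is nothing to object to.
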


We also note that for an isogeny $f :Y \rightarrow X$ and
a coherent sheaf $\calf$ on $Y$,
\begin{align}\label{eq_pushforward_of_F}
f_* \calf \text{ is GV, M-regular, or IT(0) } \Longleftrightarrow \text{ so is } \calf
\end{align}
since 
$
h^i(f_* \calf \otimes P_{\alpha}) =h^i(\calf \otimes f^* P_{\alpha}) =h^i(\calf \otimes P_{\hat{f}(\alpha)})
$
holds for any $\alpha \in \dX $, 
where $\hat{f} : \dX \rightarrow \widehat{Y}$ is the dual isogeny.
We see a $\Q$-twisted version of this fact in \autoref{lem_pushforward}.

\subsection{Fourier-Mukai transforms}

Let $\calp$ be the Poincar\'{e} line bundle on $X \times \dX$.
Let $D^b(X)$ be the bounded derived category of coherent sheaves on $X$ and 
\[
\Phi_{\calp} =\Phi_{\calp}^X : D^b(X) \rightarrow D^b(\dX)
\]
be the Fourier-Mukai functor associated to $\calp$.
We note that $\Phi_{\calp} (\calf)$ is a locally free sheaf (concentrated in degree $0$) 
for an IT(0) sheaf $\calf$.
For an isogeny $f : Y \rightarrow X$,
\begin{align}\label{eq_Phi_mu}
\hat{f}^* \circ \Phi_{\calp}^Y \simeq \Phi_{\calp}^X \circ f_* , \quad 
\hat{f}_* \circ  \Phi_{\calp}^{X}  \simeq \Phi_{\calp}^Y \circ  f^*
\end{align}
holds by  \cite[(3.4)]{MR607081}.



\subsection{(Skew) Pontrjagin products}\label{subsec_Pontrjagin product}

For coherent sheaves $\cale,\calf$ on $X$,
their \emph{Pontrjagin product} $\cale {*} \calf$ is defined as
\[
\cale {*} \calf = {(p_1+p_2)}_* (p_1^* \cale \otimes p_2^* \calf),
\]
where $p_i$ is the natural projection from $X \times X$ to the $i$-th factor for $i=1,2$.
By definition,
$\cale {*} \calf  =\calf {*} \cale$ holds.
Similarly,
their \emph{skew Pontrjagin product} $\cale \hat{*} \calf$ is defined as
\[
\cale  \hat{*}  \calf = {p_1}_* ((p_1+p_2)^* \cale \otimes p_2^* \calf).
\]
As in \cite[Remark 1.2]{MR1758758},
$\cale  \hat{*}  \calf  \simeq \cale {*} (-1_X)^* \calf $
and $\calf  \hat{*}  \cale  \simeq (-1_X)^* (\cale  \hat{*}  \calf  ) $,
where $-1_X=\mu^X_{-1}$. 


We will use the following properties of (skew) Pontrjagin products.
For simplicity,
we assume locally freeness or IT(0) for some sheaves.
In particular,
all the objects are sheaves and $\otimes$ is the usual (non-derived) tensor product in the following proposition.
  

\begin{prop}[{\cite[(3.7),(3.10)]{MR607081},\cite[Proposition 1.1]{MR1758758},\cite[Proposition 5.2]{MR2008719}}]\label{prop_Pontrjagin}
Let $L$ be an ample line bundle,
$\cale$ be a vector bundle and $\calf $ be a coherent sheaf on $X$.
\begin{itemize}
\item[(i)] If $ \cale, \calf$ are IT(0), then $\Phi_{\calp} ( \cale {*} \calf) = \Phi_{\calp}  (\cale) \otimes \Phi_{\calp}  (\calf)$.
\item[(ii)]  Assume that  $ h^i( (t_q^*\cale) \otimes \calf)=0  $ for any $q \in X$ and $i >0$.
For $p \in X$, the natural map
$
H^0(t_p^* \cale ) \otimes H^0(\calf) \rightarrow H^0((t_p^* \cale )\otimes \calf) 
$
is surjective if and only if 
$\cale \hat{*} \calf $ is generated by global sections at $p$.
\item[(iii)] If $L \otimes \calf $  is IT(0),
then
$L {*} \calf = L \otimes \phi_l^* \Phi_{\calp} (((-1_X)^* \calf ) \otimes L)$ and $L \hat{*} \calf = L \otimes \phi_l^* \Phi_{\calp} ( \calf \otimes L)$.  
\end{itemize}
\end{prop}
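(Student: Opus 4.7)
The plan is to handle the three parts separately, since each uses rather different techniques, and then quote/derive the pieces that appear naturally in each.

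For (i), the strategy is to establish the ``exchange formula'' of Mukai: under $\Phi_\calp$, the Pontrjagin product goes to the tensor product. The key geometric input is the bilinearity of the Poincar\'e bundle, namely the isomorphism on $X \times X \times \dX$
\[
(m \times \mathrm{id}_{\dX})^* \calp \simeq p_{13}^* \calp \otimes p_{23}^* \calp,
\]
where $m \colon X \times X \to X$ is the addition. Combining this with flat base change along $m$ (which is smooth) and the projection formula, I would compute
\[
\Phi_\calp(\cale * \calf) = Rp_{\dX *}\bigl(m_*(p_1^* \cale \otimes p_2^* \calf)\boxtimes \calp\bigr)
\]
and identify the right-hand side with $\Phi_\calp(\cale)\otimes^L \Phi_\calp(\calf)$. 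The IT(0) hypothesis ensures that all the derived functors collapse: $\Phi_\calp(\cale),\Phi_\calp(\calf)$ are locally free, so their derived tensor product is an honest sheaf, and thus $\cale * \calf$ is itself IT(0) with the asserted Fourier transform.

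For (ii), I would first use the cohomological hypothesis $h^i((t_q^*\cale)\otimes \calf)=0$ for $q\in X$, $i>0$, together with cohomology and base change applied to $p_1 \colon X \times X \to X$, to conclude that $R^ip_{1*}((p_1+p_2)^*\cale\otimes p_2^*\calf)=0$ for $i>0$ and that the fiber of $\cale \hat{*} \calf$ at $p$ is $H^0(t_p^*\cale\otimes\calf)$. Next, I would use the automorphism $\sigma \colon X \times X \to X \times X$, $(x,y)\mapsto (x+y,y)$, under which $\sigma^*(\cale\boxtimes\calf)=(p_1+p_2)^*\cale\otimes p_2^*\calf$, to get $H^0(\cale\hat{*}\calf)\cong H^0(\cale)\otimes H^0(\calf)$ via K\"unneth. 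Finally, I would trace a pure tensor $s\otimes t$ through the identification: its value at $(p,y)$ is $s(p+y)\otimes t(y)=(t_p^*s)(y)\cdot t(y)$, so the evaluation map at $p$ coincides with the multiplication map $H^0(t_p^*\cale)\otimes H^0(\calf)\to H^0(t_p^*\cale\otimes \calf)$ after the translation isomorphism $H^0(\cale)\cong H^0(t_p^*\cale)$. Global generation of $\cale\hat{*}\calf$ at $p$ is exactly surjectivity of this evaluation, giving the equivalence.

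For (iii), I would begin from the Theorem of the Square, which gives
\[
(\mathrm{id}_X \times \phi_l)^*\calp \simeq m^* L \otimes p_1^* L^{-1} \otimes p_2^* L^{-1}
\]
on $X\times X$. Applying flat base change to the square built from $\phi_l \colon X \to \dX$ and $p_{\dX}\colon X\times \dX \to \dX$ yields
\[
\phi_l^*\Phi_\calp(\calg) = p_{2*}\bigl(p_1^*\calg\otimes (\mathrm{id}_X\times\phi_l)^*\calp\bigr).
\]
Substituting $\calg=\calf\otimes L$ and using the formula above, an immediate cancellation gives $L\otimes \phi_l^*\Phi_\calp(\calf\otimes L)=p_{1*}(m^*L\otimes p_2^*\calf)=L\hat{*}\calf$, where I use the symmetry swap $(x,y)\mapsto (y,x)$ (under which $m$ is invariant) to exchange the projection. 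The formula for $L*\calf$ then follows from the identity $\cale\hat{*}\calf\simeq \cale*(-1_X)^*\calf$ recorded in \S\ref{subsec_Pontrjagin product}.

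I expect (i) to be the main technical hurdle: one has to be careful that the Pontrjagin product is an underived sheaf, and the bilinearity identity for $\calp$ has to be set up with the right projection indices for the base-change argument to go through. By contrast, (ii) is essentially bookkeeping on $X \times X$ via the single change-of-variables $\sigma$, and (iii) is a short computation once the Theorem of the Square is in hand.
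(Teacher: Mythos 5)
The paper does not prove this proposition: it is quoted verbatim from Mukai \cite{MR607081} and Pareschi(--Popa) \cite{MR1758758,MR2008719}, so there is no internal proof to compare against. Your sketch correctly reproduces the standard arguments from those sources --- the see-saw identity $(m\times\id)^*\calp\simeq p_{13}^*\calp\otimes p_{23}^*\calp$ plus projection formula for (i), the Kempf-style change of variables $\sigma(x,y)=(x+y,y)$ with cohomology and base change for (ii), and $(\id\times\phi_l)^*\calp\simeq m^*L\otimes p_1^*L^{-1}\otimes p_2^*L^{-1}$ with flat base change along $\phi_l$ for (iii). The one place where you are loose is in (i): local freeness of $\Phi_{\calp}(\cale)$ and $\Phi_{\calp}(\calf)$ does not by itself force $Rm_*$ to collapse, since $\Phi_{\calp}$ does not preserve the standard t-structure; you need the fiberwise vanishing $h^i(\cale\otimes(-1_X)^*t_z^*\calf)=0$ for $i>0$, which follows from the IT(0) hypotheses via the tensor statement in \autoref{prop_GV+IT(0)}, to conclude that the underived Pontrjagin product computes the derived one.
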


\section{Criterion for global generations}\label{sec_global_generation}


In this section, we prove \autoref{thm_criterion} (1),
which is nothing but (iii) in the following proposition.

\begin{prop}\label{lemma_globally_generated}
Let $L$ be an ample line bundle on an abelian variety $X$.
Let $\calf$ be a coherent sheaf on $X$ and let $x=a/b,y=a'/b$ be rational numbers. 
Assume that $x \geq \beta(l)$. 
\begin{itemize}
\item[(i)] If $\calf \otimes L^{-ab}$ is M-regular,
$ \cali_{ \mu_b^{-1}(p)} {\cdot} \calf$ is IT(0) for any $p \in X$,
where $\mu_b^{-1}(p) \subset X$ is the scheme-theoretic fiber over $p$
and $ \cali_{ \mu_b^{-1}(p)}{\cdot}\calf$ is the image of the natural homomorphism $  \cali_{ \mu_b^{-1}(p)} \otimes \calf \rightarrow \calf$.
\item[(ii)] If $\calf \langle yl \rangle$ is M-regular, $ (\cali_p{\cdot}\calf) \langle (x+y)l \rangle$ is IT(0) for any $p \in X$.
\item[(iii)] 
If $\calf \langle -x l \rangle $ is M-regular,
$\calf$ is globally generated. 
\end{itemize}
\end{prop}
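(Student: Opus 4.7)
The plan is to prove (i), which is the technical core; then (ii) follows from (i) by base change along the isogeny $\mu_b$, and (iii) is the case $y=-x$ of (ii). Concretely, for (iii), the hypothesis $\calf\langle -xl\rangle$ M-regular together with (ii) gives that $\cali_p \cdot \calf$ is (classically) IT(0) for every $p$, so $H^1(X, \cali_p \cdot \calf)=0$ and the long exact sequence of $0 \to \cali_p \cdot \calf \to \calf \to \calf \otimes k(p) \to 0$ yields the surjection $H^0(X,\calf) \twoheadrightarrow \calf \otimes k(p)$, i.e.\ $\calf$ is globally generated. For (ii), I would unwind the conclusion: by the definition of $\Q$-twisted IT(0), $(\cali_p \cdot \calf)\langle (x+y)l\rangle$ IT(0) is equivalent to $\mu_b^*(\cali_p \cdot \calf) \otimes L^{(a+a')b}$ being classically IT(0). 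Flatness of $\mu_b$ and local freeness of line bundles make image-formation commute with pullback and twisting, so, setting $\tilde\calf := \mu_b^*\calf \otimes L^{(a+a')b}$, this reduces to $\cali_{\mu_b^{-1}(p)} \cdot \tilde\calf$ IT(0); moreover $\tilde\calf \otimes L^{-ab} = \mu_b^*\calf \otimes L^{a'b}$ is classically M-regular exactly because $\calf\langle yl\rangle$ is. Thus (i), applied to $\tilde\calf$ with the same $x \geq \beta(l)$, finishes the reduction.

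For (i), I would first observe that $\calf = (\calf \otimes L^{-ab}) \otimes L^{ab}$ is IT(0) by Proposition~\ref{prop_GV+IT(0)}(i), the first factor being M-regular (hence GV) by hypothesis and the second being IT(0) and locally free. Next, the condition $x \geq \beta(l)$ combined with Lemma~\ref{lem_beta_IT(0)} and \eqref{eq_GV_IT(0)} gives that $\cali_p\langle xl\rangle$ is GV, and pulling back along $\mu_b$ via \eqref{eq_pullback} shows that $\cali_{\mu_b^{-1}(p)} \otimes L^{ab}$ is classically GV. From the short exact sequence
\[
0 \to \cali_{\mu_b^{-1}(p)} \cdot \calf \to \calf \to \calf \otimes \calo_{\mu_b^{-1}(p)} \to 0,
\]
the finite support of the quotient kills its higher cohomology, and the IT(0) of $\calf$ reduces the IT(0) property of the kernel to surjectivity of the evaluation map
\[
H^0(X, \calf \otimes P_\alpha) \to H^0(\calf \otimes \calo_{\mu_b^{-1}(p)} \otimes P_\alpha)
\]
for every $\alpha \in \widehat{X}$. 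I would verify this surjectivity by transporting the sequence under the Fourier-Mukai transform $\Phi_\calp$: both $\Phi_\calp(\calf)$ and the transform of the finite-length quotient are sheaves on $\widehat{X}$, and pairing the GV hypothesis on $\cali_{\mu_b^{-1}(p)} \otimes L^{ab}$ against the M-regular hypothesis on $\calf \otimes L^{-ab}$ through a derived tensor comparison should produce the missing surjection.

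The main obstacle is the surjectivity step in (i). The naive approach would be to apply Proposition~\ref{prop_GV+IT(0)}(i) directly to $(\cali_{\mu_b^{-1}(p)} \otimes L^{ab}) \otimes (\calf \otimes L^{-ab})$, but for $g \geq 2$ neither factor is locally free, so the proposition does not apply off the shelf. Moving the computation to $\widehat{X}$ via $\Phi_\calp$ is the workaround: the finite support of the quotient $\calf \otimes \calo_{\mu_b^{-1}(p)}$ forces its Fourier-Mukai transform to be an honest sheaf, allowing one to bypass the missing local freeness on $X$.
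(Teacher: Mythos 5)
Your reductions (iii) $\Leftarrow$ (ii) $\Leftarrow$ (i) coincide with the paper's: (ii) is obtained by applying (i) to $\mu_b^*\calf\otimes L^{(a+a')b}$, using flatness of $\mu_b$ to identify $\cali_{\mu_b^{-1}(p)}\cdot\mu_b^*\calf$ with $\mu_b^*(\cali_p\cdot\calf)$, and (iii) is the case $y=-x$ together with the vanishing $h^1(\cali_p\cdot\calf)=0$. The setup of (i) is also the paper's: $\calf$ is IT(0) by \autoref{prop_GV+IT(0)}, the quotient $\calf\otimes\calo_{\mu_b^{-1}(p)}$ has finite support, and everything reduces to surjectivity of the evaluation $H^0(\calf\otimes P_\alpha)\to \calf\otimes P_\alpha|_{\mu_b^{-1}(p)}$ for all $\alpha$.

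However, that surjectivity --- which you yourself flag as ``the main obstacle'' --- is not proved. ``Pairing the GV hypothesis against the M-regular hypothesis through a derived tensor comparison should produce the missing surjection'' is a hope, not an argument: transporting the sequence to $\widehat{X}$ does not by itself circumvent the local-freeness hypothesis in \autoref{prop_GV+IT(0)}, since the derived tensor product of the two Fourier--Mukai transforms still carries uncontrolled higher Tor terms, which is exactly the difficulty you identified on $X$. The ingredient you are missing is the \emph{continuous global generation} of M-regular sheaves, \cite[Proposition 2.13]{MR1949161}: since $\calf\otimes L^{-ab}$ is M-regular, for general $\alpha_1,\dots,\alpha_N\in\widehat{X}$ the map $\bigoplus_{j} H^0(\calf\otimes L^{-ab}\otimes P_{\alpha_j})\otimes L^{ab}\otimes P_{\alpha_j}^{\vee}\to\calf$ is surjective. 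One then chooses the $\alpha_j$ so that in addition $h^1(L^{ab}\otimes\cali_{\mu_b^{-1}(p)}\otimes P_{\alpha_j}^{\vee})=0$ --- possible precisely because $x\geq\beta(l)$ makes $L^{ab}\otimes\mu_b^*\cali_p$ GV, so the jumping locus of $h^1$ is a proper closed subset of $\widehat{X}$ --- whence each restriction $H^0(L^{ab}\otimes P_{\alpha_j}^{\vee})\to L^{ab}\otimes P_{\alpha_j}^{\vee}|_{\mu_b^{-1}(p)}$ is surjective, and a short commutative diagram yields the surjectivity of $H^0(\calf)\to\calf|_{\mu_b^{-1}(p)}$ (the case of arbitrary $\alpha$ reduces to $\alpha=o_{\widehat{X}}$ because $\calf\otimes P_\alpha\otimes L^{-ab}$ is again M-regular). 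Without this step, or an equally concrete substitute, part (i) --- and with it the whole proposition --- remains unproved.
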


\begin{proof}
(i) Fix $p \in X$ and
consider the exact sequence 
\[
0 \rightarrow \cali_{ \mu_b^{-1}(p)}{\cdot}\calf \otimes P_{\alpha} \rightarrow \calf \otimes P_{\alpha} 
\rightarrow \calf \otimes P_{\alpha}|_{\mu_b^{-1}(p)} \rightarrow 0
\]
for $\alpha \in \dX$.
Since $\calf \otimes L^{-ab}$ is M-regular and $ab >0$,
$\calf$ is IT(0) by \autoref{prop_GV+IT(0)}.
Since the support of $ \calf \otimes P_{\alpha}  |_{\mu_b^{-1}(p)}  $ is zero dimensional,
$h^i(\cali_{ \mu_b^{-1}(p)}{\cdot}\calf \otimes P_{\alpha}) =0$  for any $i \geq 2$
and $\cali_{ \mu_b^{-1}(p)}{\cdot}\calf$ is IT(0) if and only if the restriction map
\begin{align}\label{eq_restriciton}
H^0(\calf \otimes P_{\alpha} ) \rightarrow \calf \otimes P_{\alpha}|_{\mu_b^{-1}(p)}
\end{align}
is surjective for any $\alpha \in \dX$.
Since $\calf \otimes P_{\alpha} \otimes L^{-ab}$ is also M-regular,
it suffices to show the surjectivity of \ref{eq_restriciton} for $\alpha=o_{\dX} $.

By $x=a/b \geq \beta(l)$,
$L^{ab} \otimes \cali_{{\mu_b}^{-1}(p)} = L^{ab} \otimes \mu_b^* \cali_p$ is GV.
Hence
\[
V^1:=\{ \alpha \in \widehat{X} \, | \, h^1(L^{ab} \otimes  \cali_{{\mu_b}^{-1}(p)} \otimes P_{\alpha} ) >0 \}
\]
is a proper closed subset of $\dX$.

Since $\calf \otimes L^{-ab}$ is M-regular,
there exists an integer $N>0$ such that for any general $\alpha_1,\dots,\alpha_N \in \dX$
the natural map
\[
\bigoplus_{i=j}^N H^0(\calf \otimes L^{-ab} \otimes P_{\alpha_j}) \otimes P_{\alpha_j}^{\vee} \rightarrow \calf \otimes L^{-ab}
\]
is surjective by \cite[Proposition 2.13]{MR1949161}.
We take general $\alpha_j$ so that $-\alpha_j \not \in V^1$.

Consider the following diagram:
\[
\xymatrix{
\bigoplus_{j=1}^N H^0(\calf \otimes L^{-ab} \otimes P_{\alpha_j}) \otimes H^0(L^{ab} \otimes P_{\alpha_j}^{\vee}) \otimes \calo_X \ar[r] \ar[d] & H^0(\calf) \otimes \calo_X \ar[d] \\
\bigoplus_{j=1}^N H^0(\calf \otimes L^{-ab} \otimes P_{\alpha_j}) \otimes L^{ab} \otimes P_{\alpha_j}^{\vee} \ar[r] & \calf \\
 }
\]
Since the bottom map is surjective,
$ H^0(\calf) \rightarrow \calf|_{\mu_b^{-1} (p)}$ is surjective
if
so is
\[
\bigoplus_{j=1}^N H^0(\calf \otimes L^{-ab} \otimes P_{\alpha_j}) \otimes H^0(L^{ab} \otimes P_{\alpha_j}^{\vee}) 
\rightarrow \bigoplus_{j=1}^N H^0(\calf \otimes L^{-ab} \otimes P_{\alpha_j}) \otimes L^{ab} \otimes P_{\alpha_j}^{\vee}|_{\mu_b^{-1} (p)}.
\]
This map is surjective since 
so is $H^0(L^{ab} \otimes P_{\alpha_j}^{\vee}) \rightarrow  L^{ab} \otimes P_{\alpha_j}^{\vee}|_{\mu_b^{-1} (p)}$ for any $j$  by $-\alpha_j \not \in V^1$.
Hence (i) holds.

\vspace{1mm}
\noindent
(ii)
If $\calf \langle yl \rangle$ is M-regular,
so is $\mu_b^* \calf \otimes L^{a'b}$ by definition.
By (i), 
$ \cali_{ \mu_b^{-1}(p)}{\cdot}\mu_b^* \calf \otimes L^{a'b} \otimes L^{ab}$ is IT(0) for any $p \in X$.
Since $\mu_b$ is flat,
$ \cali_{ \mu_b^{-1}(p)}{\cdot}\mu_b^* \calf  = \mu_b^* \cali_p{\cdot}\mu_b^* \calf =\mu_b^* (\cali_p{\cdot}\calf)$ holds.
Hence $\mu_b^* (\cali_p{\cdot}\calf) \otimes L^{a'b} \otimes L^{ab}=\mu_b^* (\cali_p{\cdot}\calf) \otimes L^{(a+a')b}$ is IT(0),
which means that $( \cali_p{\cdot}\calf) \langle (x+y) l \rangle$ is IT(0).

\vspace{1mm}
\noindent
(iii)
The global generation of $\calf$ follows from the vanishing $h^1(\cali_p{\cdot}\calf )=0 $ for any $p \in X$.
Hence it suffices to show that
$\cali_p{\cdot}\calf  $ is IT(0) for any $p \in X$.
This follows from (ii) for $y=-x$.
%
\end{proof}


\section{Surjectivity of multiplication maps on global sections
}\label{sec_M_L}


Throughout this section,
$L$ is an ample line bundle on an abelian variety $X$.
If $L$ is basepoint free,
we can define a vector bundle $M_L$ on $X$ by the exact sequence
\begin{align}\label{eq_M_L}
0 \rightarrow M_L \rightarrow H^0(L) \otimes \calo_X \rightarrow L \rightarrow 0.
\end{align}


We note that we do not assume the basepoint freeness of $L$ otherwise stated.
The following proposition is essentially proved in \cite[Proposition 8.1]{MR4157109}:

\begin{prop}[{\cite[Proposition 8.1]{MR4157109}}]\label{lemma_I_o_to_M_L}
Let $\calf$ be an IT(0) sheaf $\calf$ on $X$
and $y \in \Q_{>0}$.
Then $\calf \langle -y l \rangle $ is GV, M-regular, or IT(0) if and only if so is $\phi_l^* \Phi_{\calp}(\calf) \langle \frac{1}{y} l \rangle$.

In particular,
if $L$ is basepoint free,
$\cali_o \langle x l \rangle $ is GV, M-regular, or IT(0) if and only if so is $M_L \langle \frac{x}{1-x} l \rangle$
for a rational number $ 0 < x <1$.
\end{prop}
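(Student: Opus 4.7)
The proof naturally splits into the general equivalence for an IT(0) sheaf $\calf$ and the specific identification $\phi_l^*\Phi_{\calp}(\cali_o\otimes L)\simeq M_L\otimes L^{-1}$, from which the particular case follows. For the main equivalence, write $y=a/b$ with $a,b\in\Z_{>0}$. By the very definition of the $\Q$-twisted properties, $\calf\langle-yl\rangle$ is GV, M-regular, or IT(0) iff $\mu_b^*\calf\otimes L^{-ab}$ is, while $\phi_l^*\Phi_{\calp}(\calf)\langle y^{-1}l\rangle$ has the corresponding property iff $\mu_a^*\phi_l^*\Phi_{\calp}(\calf)\otimes L^{ab}$ does. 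Using $\phi_l\circ\mu_a=\hat{\mu}_a\circ\phi_l$ and the intertwining \ref{eq_Phi_mu},
\[
\mu_a^*\phi_l^*\Phi_{\calp}(\calf)=\phi_l^*\hat{\mu}_a^*\Phi_{\calp}(\calf)=\phi_l^*\Phi_{\calp}(\mu_{a*}\calf),
\]
and $\mu_{a*}\calf$ remains IT(0) by \ref{eq_pushforward_of_F}. Thus the first part reduces to the assertion: $\mu_b^*\calf\otimes L^{-ab}$ has property $\mathrm{P}$ iff $\phi_l^*\Phi_{\calp}(\mu_{a*}\calf)\otimes L^{ab}$ does.

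The backbone of this reduced equivalence is \autoref{prop_Pontrjagin} (iii), which after replacing $\calf$ by $\calg\otimes L^{-1}$ takes the form: for any IT(0) sheaf $\calg$,
\[
L\otimes\phi_l^*\Phi_{\calp}(\calg)=L\,\hat{*}\,(\calg\otimes L^{-1}).
\]
Applied to $\calg=\mu_{a*}\calf$ and iterated with higher tensor powers of $L$, this expresses $\phi_l^*\Phi_{\calp}(\mu_{a*}\calf)\otimes L^{ab}$ as a skew Pontrjagin product of $\mu_{a*}\calf$ with a power of $L$. Combining Mukai's inversion $\Phi_{\calp^\vee}\Phi_{\calp}\calg\simeq(-1_X)^*\calg[-g]$ with the Fourier-Mukai compatibility $\Phi_{\calp}(\cale*\calf)=\Phi_{\calp}(\cale)\otimes\Phi_{\calp}(\calf)$ of \autoref{prop_Pontrjagin} (i) translates the cohomological condition on this Pontrjagin product back into the condition on $\mu_b^*\calf\otimes L^{-ab}$. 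The hard part is precisely this bookkeeping: tracking how Fourier-Mukai transforms and Pontrjagin products interact with the multiplication isogenies $\mu_a,\mu_b$ and their duals $\hat{\mu}_a,\hat{\mu}_b$, and matching degrees and shifts so that the numerical conditions align. The cohomological-rank-function formalism of \cite{MR4157109} provides the cleanest framework for this, and supplies the equivalence directly.

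For the ``in particular'' statement, apply the main equivalence with $\calf:=\cali_o\otimes L$, which is IT(0) by \autoref{ex_line bundle2} since $L$ is basepoint free, and $y:=1-x\in(0,1)$. Then
\[
\calf\langle-yl\rangle=\cali_o\langle l\rangle\langle-(1-x)l\rangle=\cali_o\langle xl\rangle.
\]
To identify the right-hand side, apply $\Phi_{\calp}$ to the short exact sequence $0\to\cali_o\otimes L\to L\to\calo_o\to 0$, obtaining a distinguished triangle on $\dX$, and pull back by $\phi_l$. Using the classical Mukai identity $\phi_l^*\Phi_{\calp}(L)\simeq H^0(L)\otimes L^{-1}$ and $\phi_l^*\Phi_{\calp}(\calo_o)=\calo_X$, the pulled-back triangle becomes a short exact sequence
\[
0\to\phi_l^*\Phi_{\calp}(\cali_o\otimes L)\to H^0(L)\otimes L^{-1}\to\calo_X\to 0,
\]
whose twist by $L$ is precisely \ref{eq_M_L}. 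Hence $\phi_l^*\Phi_{\calp}(\cali_o\otimes L)\simeq M_L\otimes L^{-1}$, and the right-hand side of the equivalence becomes
\[
M_L\otimes L^{-1}\langle y^{-1}l\rangle=M_L\langle(y^{-1}-1)l\rangle=M_L\langle\tfrac{x}{1-x}l\rangle,
\]
which is exactly the desired conclusion.
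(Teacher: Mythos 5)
Your ``in particular'' step is fine --- in fact you derive the identification $\phi_l^*\Phi_{\calp}(\cali_o\otimes L)\simeq M_L\otimes L^{-1}$ from the exact sequence $0\to\cali_o\otimes L\to L\to\calo_o\to0$ rather than citing it, and the bookkeeping $y=1-x$, $y^{-1}-1=\frac{x}{1-x}$ is correct. The gap is in the main equivalence, which is the actual content of the proposition. Your reduction to comparing $\mu_b^*\calf\otimes L^{-ab}$ with $\phi_l^*\Phi_{\calp}(\mu_{a*}\calf)\otimes L^{ab}$ is correct as far as it goes, but the subsequent route via \autoref{prop_Pontrjagin} (iii), ``iteration with higher tensor powers of $L$'', and Mukai inversion is never carried out, and it is not clear it can be: \autoref{prop_Pontrjagin} (iii) is tied to the single polarization $l$ and its isogeny $\phi_l$, so it does not iterate to express $\phi_l^*\Phi_{\calp}(\calg)\otimes L^{ab}$ as a Pontrjagin product, and no mechanism is offered that would make the exponent $-ab$ on one side correspond to $+ab$ on the other. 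Declaring at the end that ``the cohomological-rank-function formalism\dots supplies the equivalence directly'' is a citation of the conclusion, not a proof.

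What the paper actually uses, and what your write-up is missing, is the transformation formula of \cite[Proposition 2.3]{MR4157109},
\[
h^i_{\calf}(-yl)=\frac{y^g}{\chi(l)}\,h^i_{\phi_l^*\Phi_{\calp}(\calf)}\Bigl(\tfrac{1}{y}\,l\Bigr),
\]
combined with the characterization \ref{eq_h^i}: a $\Q$-twist is GV, M-regular, or IT(0) at a boundary point according to whether $h^i$ evaluated at a perturbation by $-tl$ is $O(t^i)$, $O(t^{i+1})$, or $0$ for small $t\in\Q_{>0}$. One then replaces $y$ by $y+t$, notes $\frac{1}{y+t}=\frac{1}{y}-\frac{t}{y(y+t)}$, and checks that this reparametrization, being asymptotically linear in $t$ with the prefactor $\frac{(y+t)^g}{\chi(l)}$ bounded away from $0$ and $\infty$, preserves each of the three orders of vanishing. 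Without this formula and the order-of-vanishing matching, the equivalence between a negative twist of $\calf$ and a positive twist of $\phi_l^*\Phi_{\calp}(\calf)$ is not established.
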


To show \autoref{lemma_I_o_to_M_L},
we recall some results in \cite{MR4157109}.
For a coherent sheaf $\calf$ on $X$, $y \in \Q$ and $i \geq 0$,
Jiang and Pareschi define a rational number $ h^i_{\calf}(yl) \geq 0$
and study the function $\Q \rightarrow \Q : y \mapsto  h^i_{\calf}(yl)$,
which is called the \emph{cohomological rank function} of $\calf$.
See \cite{MR4157109} for the definition. 
We note that $ h^i_{\calf}(yl) $ can be defined in $\mathrm{char} (\K ) \geq 0$ by \cite[Section 2]{MR4114062}.
By \cite[Theorem 5.2 (c), Proposition 5.3]{MR4157109},
$ \calf \langle x_0 l  \rangle$ is GV (resp.\ M-regular, resp.\ IT(0)) if and only if for all $i \geq 1$ it holds that
\begin{align}\label{eq_h^i}
\text{$h^i_{\calf} ( (x_0-t) l) =O(t^i)$ (resp.\ $=O(t^{i+1})$, resp.\ $=0$) for sufficiently small $t \in \Q_{>0}$.}
\end{align}

\begin{proof}[Proof of \autoref{lemma_I_o_to_M_L}]
By \cite[Proposition 2.3]{MR4157109}, it holds that
\[
h^i_{\calf} ( -y l) = \frac{y^g}{\chi(l)} h^i_{\phi_l^* \Phi_{\calp}(\calf)}\left( \frac{1}{y} l \right)
\]
for any $y \in \Q_{>0}$
and hence 
\[
h^i_{\calf} ( (-y-t) l) = \frac{(y+t)^g}{\chi(l)} h^i_{\phi_l^* \Phi_{\calp}(\calf)}\left( \frac{1}{y+t} l \right)
\]
for any $y, t \in \Q_{>0}$.
Since 
\[
\frac{1}{y+t} =\frac{1}{y} - \frac{1}{y(y+t)} t,
\]
for a fixed $y >0$, $h^i_{\calf} ( (-y-t) l) =O(t^i)$, or $=O(t^{i+1})$, or $=0$ for sufficiently small $t  >0$
if and only if so is $h^i_{\phi_l^* \Phi_{\calp}(\calf)}\left( \frac{1}{y+t} l \right)$
if and only if so is $h^i_{\phi_l^* \Phi_{\calp}(\calf)}\left( (\frac{1}{y} - t) l \right)$.
By \ref{eq_h^i}, the first statement of this proposition holds.

When $L$ is basepoint free, $\cali_o(L) $ is IT(0) and 
$\phi_l^* \Phi_{\calp} (\cali_o(L)) =M_L \otimes L^{-1}$ as shown in the proof of \cite[Proposition 8.1]{MR4157109}.
Hence the second statement follows from the first one by considering $\calf=\cali_o(L)$ and $y=1-x$.
\end{proof}

\begin{rem}\label{rem_proj_normal_beta}
Assume that $L$ is projectively normal.
Then $H^0(L) \otimes H^0(L) \rightarrow H^0(L^2)$ is surjective
and hence $h^1(M_L \otimes L)  =0$ by \ref{eq_M_L}.
Since $ h^i(M_L \otimes L \otimes P_{\alpha})  =0$ for any $\alpha \in \dX$ and $i \geq 2$ by \ref{eq_M_L},
$M_L \otimes L$ is GV and hence 
so is $\cali_o \langle \frac12 l \rangle$ by \autoref{lemma_I_o_to_M_L},
which is equivalent to $ \beta(l) \leq 1/2$. 

More generally,
we have 
$\beta(l) \leq n/(n+1) $ if $H^0(L) \otimes H^0(L^n) \rightarrow H^0(L^{n+1})$ is surjective for $n \geq 1$ by the same argument.
For example,   if $X$ is simple and $h^0(L) > ((n+1)/n)^{g} \cdot g!$ in $\Char (\K)=0$,
this multiplication map is surjective by \cite[Theorem 1.2]{Bloss:2019aa}, and hence $\beta(l) \leq n/(n+1)$ holds.

\end{rem}

\begin{lem}\label{lem_pont_IT(0)}
Let 
$\cale$ be a locally free sheaf and
$ \calf$ be a coherent sheaf on $X$.
If $\cale$ and $ \calf$  are IT(0),
then $\cale \hat{*} \calf$ is also IT(0). 
\end{lem}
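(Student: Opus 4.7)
The plan is to reduce the claim to the analogous statement for the ordinary Pontrjagin product and then invoke \autoref{prop_Pontrjagin} (i). Using the identity $\cale \hat{*} \calf \simeq \cale * ((-1_X)^* \calf)$ recalled in \autoref{subsec_Pontrjagin product}, together with the fact that $(-1_X)^* P_\alpha \simeq P_{-\alpha}$, so that pullback by $-1_X$ preserves the IT(0) property, the problem reduces to showing: if $\cale$ is locally free and both $\cale, \calf$ are IT(0), then $\cale * \calf$ is IT(0).

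For this reduced statement, I would apply \autoref{prop_Pontrjagin} (i), which gives
\[
\Phi_{\calp}(\cale * \calf) \simeq \Phi_{\calp}(\cale) \otimes \Phi_{\calp}(\calf).
\]
Since $\cale$ and $\calf$ are IT(0), each of $\Phi_{\calp}(\cale)$ and $\Phi_{\calp}(\calf)$ is a locally free sheaf on $\dX$ concentrated in degree $0$, hence so is their tensor product. Thus $R\Phi_{\calp}(\cale * \calf)$ is a locally free sheaf placed in degree $0$, and by the standard characterization of IT(0) via cohomology-and-base-change (the top cohomology vanishes for all $\alpha$, and one then descends the degree), $\cale * \calf$ itself is IT(0).

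The main delicate point is interpreting the equality in \autoref{prop_Pontrjagin} (i) correctly: its left-hand side is a priori an object of $D^b(\dX)$, and the statement asserts that it is concentrated in degree $0$ and equal to the genuine (non-derived) tensor product on the right, which makes the IT(0) conclusion automatic. A more hands-on alternative is available if one prefers to avoid appealing to this subtlety: the identity $(\cale * \calf) \otimes P_\alpha \simeq (\cale \otimes P_\alpha) * (\calf \otimes P_\alpha)$ reduces the task to showing $H^i(\cale * \calf) = 0$ for $i > 0$, and one then applies Leray to $\mu = p_1 + p_2 : X \times X \to X$. The restriction of $p_1^* \cale \otimes p_2^* \calf$ to the fiber of $\mu$ over $z$ is isomorphic to $\cale \otimes (-1_X)^* t_z^* \calf$, which is IT(0) by \autoref{prop_GV+IT(0)} (i); hence $R^j \mu_* (p_1^* \cale \otimes p_2^* \calf) = 0$ for $j > 0$, and the Künneth formula on $X \times X$ together with $H^{>0}(\cale) = H^{>0}(\calf) = 0$ delivers the required vanishing.
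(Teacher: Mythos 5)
Your proof is correct, but it takes a genuinely different route from the paper's. The paper stays entirely inside the skew Pontrjagin product: it first writes $\cale \hat{*} \calf \simeq (-1_X)^*(\calf \hat{*} \cale)$ so that the locally free factor sits in the second slot, then invokes the exchange formula of \cite[Proposition 5.5 (b)(i)]{MR2008719}, $h^i((\calf \hat{*} \cale)\otimes P_\alpha) = h^i((\calf \hat{*} P_\alpha)\otimes \cale)$, together with the identification $\calf \hat{*} P_\alpha \simeq H^0(\calf\otimes P_\alpha)\otimes P_\alpha^{\vee}$, so that the required vanishing collapses onto the IT(0) hypothesis on $\cale$ alone (the hypotheses of the exchange formula being supplied by the IT(0) of $t_p^*\calf$ and of $t_p^*\calf\otimes\cale$). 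You instead pass to the ordinary Pontrjagin product via $\cale\hat{*}\calf \simeq \cale * (-1_X)^*\calf$ and work on the base. Your first argument, through \autoref{prop_Pontrjagin} (i), is the quickest, but it does hinge on reading that proposition as a derived-category statement --- namely that $R\Phi_{\calp}$ of the (derived) Pontrjagin product is concentrated in degree $0$ and locally free --- which is exactly what the paper's preamble to that proposition (``all the objects are sheaves'') is meant to guarantee; you correctly flag this as the delicate point, and note that merely knowing $R^0\Phi_{\calp}(\cale*\calf)$ is locally free would not suffice. Your Leray--K\"unneth alternative closes that gap unconditionally: the fiberwise IT(0) of $\cale\otimes(-1_X)^*t_z^*\calf$ (from \autoref{prop_GV+IT(0)} (i)) kills the higher direct images along $p_1+p_2$, the twist by $P_\alpha$ is absorbed by $(p_1+p_2)^*P_\alpha\simeq p_1^*P_\alpha\otimes p_2^*P_\alpha$, and K\"unneth finishes. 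That version is arguably more elementary and self-contained than the paper's, trading the exchange machinery of \cite{MR2008719} for a direct base-change computation.
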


\begin{proof}
Since $\cale \hat{*} \calf \simeq (-1_X)^* (\calf \hat{*} \cale) $,
it suffices to show that $\calf \hat{*} \cale$ is IT(0). 
For $\alpha \in \dX$ and $i >0$,
we have
\begin{align*}
h^i( (\calf \hat{*} \cale) \otimes P_{\alpha}) = h^i( (\calf \hat{*} P_{\alpha}) \otimes \cale) 
=h^i(H^0(\calf \otimes P_{\alpha} ) \otimes P_{\alpha}^{\vee} \otimes \cale) =0,
\end{align*}
where the first equality follows from \cite[Proposition 5.5 (b)(i)]{MR2008719},
the second one from \cite[Remarks 3.5 (c)]{MR1758758},
and the third one holds since $\cale$ is IT(0).
We note that 
we need $ h^i(t_p^* \calf \otimes P_{\alpha} ) = h^i( t_p^* \calf  \otimes \cale) =0$ for any $p \in X$ and $i >0$ to apply \cite[Proposition 5.5 (b)(i)]{MR2008719}.
This condition is satisfied since $t_p^* \calf  $ and $ t_p^* \calf  \otimes \cale$  are IT(0).
\end{proof}

\autoref{thm_criterion} (2), (3)
follow from the following proposition:

\begin{prop}\label{prop_surjectiveity_E,F}
Let 
$\cale$ be a locally free sheaf and
$ \calf$ be a coherent sheaf on $X$.
Assume 
\begin{itemize}
\item[(a)] $\cale,\calf$ are IT(0), and
\item[(b)]  $ \phi_l^* \Phi_{\calp}(\cale)  \otimes \phi_l^* \Phi_{\calp} ((-1_X)^*\calf )   \left\langle  \frac{1}{x} l \right\rangle  $ is M-regular for some rational number $x \geq \beta(l)$.
\end{itemize}
Then the natural map
$H^0(t_p^* \cale ) \otimes H^0(\calf) \rightarrow H^0(t_p^* \cale \otimes \calf)$ is surjective for any $p \in X$.

Furthermore,
the assumptions (a), (b) are satisfied if 
\begin{enumerate}
\item $\cale=L$, and  $\calf \langle -\frac{x}{1-x} l \rangle$ is M-regular for some $\beta(l) \leq x <1$, or
\item there exist rational numbers $ s,t >0$ such that 
$\cale \langle -s l\rangle$ and $ \calf \langle -t l\rangle$ are M-regular and $st/(s+t) \geq \beta(l)$.
\end{enumerate}
\end{prop}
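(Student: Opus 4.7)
My plan is to reduce the surjectivity statement to the global generation of the skew Pontrjagin product $\cale \hat{*} \calf$, and then verify this via \autoref{lemma_globally_generated} (iii), matching the resulting M-regularity condition with hypothesis (b) through the Fourier-Mukai duality of \autoref{lemma_I_o_to_M_L}.

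First I will invoke \autoref{prop_Pontrjagin} (ii) to translate the surjectivity of $H^0(t_p^*\cale)\otimes H^0(\calf)\to H^0(t_p^*\cale\otimes\calf)$ into global generation of $\cale \hat{*}\calf$ at $p$. Its hypothesis $h^i((t_q^*\cale)\otimes \calf)=0$ for $q\in X$ and $i>0$ holds because (a) gives IT(0) for both $t_q^*\cale$ and $\calf$, whence IT(0) of their tensor product by \autoref{prop_GV+IT(0)} (i). Next, by \autoref{lemma_globally_generated} (iii) it suffices to show that $(\cale\hat{*}\calf)\langle -xl\rangle$ is M-regular, for the $x\geq \beta(l)$ furnished by (b). Since $\cale\hat{*}\calf$ is IT(0) by \autoref{lem_pont_IT(0)}, \autoref{lemma_I_o_to_M_L} rephrases this as the M-regularity of $\phi_l^*\Phi_{\calp}(\cale \hat{*}\calf)\langle \tfrac{1}{x}l\rangle$. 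Using $\cale\hat{*}\calf\simeq \cale * (-1_X)^*\calf$ together with \autoref{prop_Pontrjagin} (i), I obtain $\Phi_{\calp}(\cale\hat{*}\calf)=\Phi_{\calp}(\cale)\otimes \Phi_{\calp}((-1_X)^*\calf)$, which is exactly the sheaf appearing in (b).

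For the sufficient conditions, IT(0) of $\cale$ and $\calf$ in each case will be obtained by tensoring the given M-regular twist with $\calo_X\langle (\,\cdot\,)l\rangle$ for a positive rational exponent (itself IT(0)) and invoking \autoref{prop_GV+IT(0)} (i). For case (1), with $\cale=L$, I will use \autoref{prop_Pontrjagin} (iii) to write $L\hat{*}\calf = L\otimes \phi_l^*\Phi_{\calp}(\calf\otimes L)$, so that $(L\hat{*}\calf)\langle -xl\rangle = \phi_l^*\Phi_{\calp}(\calf\otimes L)\langle (1-x)l\rangle$; then \autoref{lemma_I_o_to_M_L} applied to the IT(0) sheaf $\calf\otimes L$ with $y=1/(1-x)$ converts this into M-regularity of $\calf\langle -\tfrac{x}{1-x}l\rangle$, which is the hypothesis. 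For case (2), set $x=st/(s+t)$, so that $\tfrac{1}{x}=\tfrac{1}{s}+\tfrac{1}{t}$, and split
\[
\phi_l^*\Phi_{\calp}(\cale)\otimes\phi_l^*\Phi_{\calp}((-1_X)^*\calf)\bigl\langle \tfrac{1}{x}l\bigr\rangle = \bigl(\phi_l^*\Phi_{\calp}(\cale)\langle \tfrac{1}{s}l\rangle\bigr)\otimes\bigl(\phi_l^*\Phi_{\calp}((-1_X)^*\calf)\langle \tfrac{1}{t}l\rangle\bigr).
\]
By \autoref{lemma_I_o_to_M_L} each factor is M-regular iff $\cale\langle -sl\rangle$, respectively $(-1_X)^*\calf\langle -tl\rangle$, is; the latter is equivalent to M-regularity of $\calf\langle -tl\rangle$ by \ref{eq_pullback}. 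Then \autoref{prop_GV+IT(0)} (ii) delivers (b).

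The main obstacle, as I see it, is the careful bookkeeping of the rational twist parameters: matching $1/y$ with $1-x$ in case (1) and exploiting the additive decomposition $\tfrac{1}{x}=\tfrac{1}{s}+\tfrac{1}{t}$ in case (2), so that \autoref{lemma_I_o_to_M_L} can be applied factor-by-factor on one side of the Fourier-Mukai duality and to $\cale\hat{*}\calf$ as a whole on the other. Everything else is formal once the necessary IT(0) statements are in place.
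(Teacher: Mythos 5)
Your proposal is correct and follows essentially the same route as the paper's proof: reduce the surjectivity to global generation of $\cale \hat{*} \calf$ via \autoref{prop_Pontrjagin} (ii), apply \autoref{lemma_globally_generated} (iii), use \autoref{lem_pont_IT(0)} and \autoref{lemma_I_o_to_M_L} to transfer the required M-regularity across the Fourier--Mukai transform, and identify $\Phi_{\calp}(\cale \hat{*} \calf) = \Phi_{\calp}(\cale) \otimes \Phi_{\calp}((-1_X)^*\calf)$ with the sheaf in (b); case (2) is handled identically via $\frac1x = \frac1s + \frac1t$. The only (harmless) deviation is in case (1), where you establish the M-regularity of $(L \hat{*} \calf)\langle -xl \rangle$ directly through \autoref{prop_Pontrjagin} (iii) rather than verifying hypothesis (b) itself by computing $\phi_l^*\Phi_{\calp}(L) = H^0(L)\otimes L^{-1}$ — this is precisely the shortcut the paper itself records in \autoref{rem_relation (2), (3)}.
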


\begin{proof}
Assume that (a), (b) are satisfied.
To show the surjectivity of $H^0(t_p^* \cale ) \otimes H^0(\calf) \rightarrow H^0(t_p^* \cale \otimes \calf)$ for any $p \in X$,
it suffices to show that $\cale \hat{*} \calf  $ is globally generated by \autoref{prop_Pontrjagin} (ii). 
By \autoref{thm_criterion} (1),
it is enough to show the M-regularity of $\cale \hat{*} \calf \langle -x l \rangle$.
Since $\cale, \calf$ are IT(0), 
so is $\cale \hat{*} \calf $ by \autoref{lem_pont_IT(0)}.
Hence 
$\cale \hat{*} \calf \langle -x l \rangle$ is M-regular if and only if 
so is $\phi_l^* \Phi_{\calp}(\cale \hat{*} \calf ) \langle \frac{1}{x} l \rangle$ by  \autoref{lemma_I_o_to_M_L}.
Since  
\begin{align*}\label{eq_phi_pont}
 \Phi_{\calp}(\cale \hat{*} \calf ) = \Phi_{\calp}(\cale {*} (-1_X)^*\calf ) = \Phi_{\calp}(\cale) \otimes \Phi_{\calp} ((-1_X)^*\calf ) 
\end{align*}
by  \autoref{prop_Pontrjagin} (i),
we have $\phi_l^* \Phi_{\calp}(\cale \hat{*} \calf ) \langle \frac{1}{x} l \rangle =\phi_l^* \Phi_{\calp}(\cale)  \otimes \phi_l^* \Phi_{\calp} ((-1_X)^*\calf )  \langle \frac{1}{x} l \rangle $,
which is M-regular by (b).
Hence $\cale \hat{*} \calf \langle -x l \rangle$ is M-regular and 
$H^0(t_p^* \cale ) \otimes H^0(\calf) \rightarrow H^0(t_p^* \cale \otimes \calf)$ is surjective for any $p \in X$

\vspace{1mm}
The rest is to show that (a), (b) are satisfied in the cases (1), (2).

\vspace{1mm}
\noindent
(1) Since $x / (1-x) >0$ and $\calf \langle \frac{-x}{1-x} l \rangle$ is M-regular, $\calf $ is IT(0).
Since $L$ is an ample line bundle and hence IT(0), (a) is satisfied. 

Since
$\phi_l^* \Phi_{\calp}(\cale)  = \phi_l^* \Phi_{\calp}(L) =H^0(L) \otimes L^{-1}  $ by \cite[Proposition 3.11 (1)]{MR607081},
we have 
\[
\phi_l^* \Phi_{\calp}(\cale)  \otimes \phi_l^* \Phi_{\calp} ((-1_X)^*\calf )   \left\langle  \frac{1}{x} l \right\rangle 
=H^0(L) \otimes \phi_l^* \Phi_{\calp} ((-1_X)^*\calf )   \left\langle  \frac{1-x}{x} l \right\rangle .
\]
By \autoref{lemma_I_o_to_M_L},
this is M-regular if and only if so is $((-1_X)^*\calf ) \langle  -\frac{x}{1-x} l \rangle$,
which is equivalent to the M-regularity of $\calf  \langle  -\frac{x}{1-x} l \rangle$ by $(-1_X)^*l=l$.
Hence (b) is satisfied.

\vspace{1mm}
\noindent
(2) Since $s, t >0$ and $\cale \langle -s l\rangle$ and $ \calf \langle -t l\rangle$ are M-regular,
(a) is satisfied.

For (b), set $x=st/(s+t) \geq \beta(l)$.
Then $1/x=1/s+1/t $ holds and 
\[
\phi_l^* \Phi_{\calp}(\cale)  \otimes \phi_l^* \Phi_{\calp} ((-1_X)^*\calf )   \left\langle  \frac{1}{x} l \right\rangle 
= \left(\phi_l^* \Phi_{\calp}(\cale)  \left\langle  \frac{1}{s} l \right\rangle \right) 
\otimes \left(\phi_l^* \Phi_{\calp} ((-1_X)^*\calf) \left\langle \frac{1}{t}  l \right\rangle  \right) .
\]
By \autoref{lemma_I_o_to_M_L},
$ \phi_l^* \Phi_{\calp}(\cale)  \left\langle  \frac{1}{s} l \right\rangle $ and 
$ \phi_l^* \Phi_{\calp} ((-1_X)^*\calf) \left\langle \frac{1}{t}  l \right\rangle $ are M-regular
since so are $\cale \langle -s l\rangle$ and $((-1_X)^*\calf ) \langle -t l\rangle$.
Hence (b) is satisfied by \autoref{prop_GV+IT(0)}.
\end{proof}

\begin{rem}\label{rem_relation (2), (3)}
The M-regularity of 
$L \hat{*} \calf \langle  -x l \rangle$ in the case (1) of \autoref{prop_surjectiveity_E,F} can be proved slightly easier as follows:

By \autoref{prop_Pontrjagin} (iii), we have
$L \hat{*} \calf \langle  -x l \rangle   = L \otimes \phi_l^* \Phi_{\calp}(\calf \otimes L) \langle  -x l \rangle  =  \phi_l^* \Phi_{\calp}(\calf \otimes L) \langle  (1-x) l \rangle$.
By \autoref{lemma_I_o_to_M_L},
this is M-regular if and only if so is $ \calf \otimes L \langle -\frac{1}{1-x} l\rangle = \calf  \langle -\frac{x}{1-x} l\rangle$.
\end{rem}

\autoref{lemma_I_o_to_M_L} also implies
the following lemma,
which is a $\Q$-twisted version of \ref{eq_pushforward_of_F}.

\begin{lem}\label{lem_pushforward}
Let $f : Y \rightarrow X$ be an isogeny 
and $\calf$ be a coherent sheaf on $Y$.
For $x \in \Q$,
$ f_* \calf \langle x l \rangle $ is GV, M-regular or IT(0) if and only if so is $\calf \langle x f^* l \rangle$.

In particular,
for a coherent sheaf $\calf$ on $X$, an integer $m >0$ and $x \in \Q$,
${\mu_m}_* \calf \langle xl\rangle $ is GV, M-regular or IT(0) if and only if so is $\calf \langle m^2xl\rangle $.
\end{lem}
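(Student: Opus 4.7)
The plan is to reduce the $\Q$-twisted claim to its untwisted counterpart \ref{eq_pushforward_of_F} by unwinding the definition of the three properties for $\Q$-twisted sheaves. Write $x = a/b$ with $b > 0$ and fix a line bundle $L$ on $X$ representing $l$. By the very definition recalled in \autoref{subsec_notion_GV}, $f_*\calf \langle xl \rangle$ is GV (resp.\ M-regular, resp.\ IT(0)) exactly when $\mu_b^{X*}(f_*\calf) \otimes L^{ab}$ has the corresponding property; similarly, $\calf \langle x f^*l \rangle$ has the property exactly when $\mu_b^{Y*}\calf \otimes (f^*L)^{ab}$ does. My first task is therefore to identify these two test sheaves on the nose.

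For this, I will use that an isogeny $f : Y \to X$ of abelian varieties is a finite flat homomorphism, so the square $\mu_b^X \circ f = f \circ \mu_b^Y$ commutes with all four maps flat. Flat base change then yields $\mu_b^{X*}f_*\calf \simeq f_*\mu_b^{Y*}\calf$, and the projection formula (applicable because $f$ is finite and $L^{ab}$ is locally free) gives
\[
\mu_b^{X*}(f_*\calf) \otimes L^{ab} \;\simeq\; f_*\bigl(\mu_b^{Y*}\calf \otimes (f^*L)^{ab}\bigr).
\]
Applying the untwisted equivalence \ref{eq_pushforward_of_F} to the coherent sheaf $\mu_b^{Y*}\calf \otimes (f^*L)^{ab}$ on $Y$ shows that the right-hand side is GV/M-regular/IT(0) if and only if this sheaf itself is. Chaining the three equivalences proves the first assertion.

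For the ``in particular'' part, I would simply specialize to $f = \mu_m : X \to X$, for which $\mu_m^*l = m^2 l$, so the twist $xf^*l$ reads exactly as $m^2 xl$, matching the stated equivalence. I do not anticipate a substantive obstacle: the only check is that flat base change and the projection formula are genuinely available, which is automatic for a finite flat homomorphism. The author's pointer to \autoref{lemma_I_o_to_M_L} suggests an alternative route through the cohomological rank function $y \mapsto h^i_{\calf}(yl)$ and its behavior under isogeny pushforward, but the direct argument sketched above looks cleaner and bypasses that machinery entirely.
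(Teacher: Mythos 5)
There is a genuine gap at the central step. The square formed by $f$ and the multiplication maps, $\mu_b^X\circ f=f\circ\mu_b^Y$, is commutative but in general \emph{not} Cartesian: being Cartesian is equivalent to $f$ restricting to an isomorphism $Y[b]\to X[b]$ on $b$-torsion, which fails whenever $\deg f$ shares a factor with $b$ (and you cannot always choose $b$ coprime to $\deg f$, since $b$ must clear the denominator of $x$ --- e.g.\ $x=1/2$ and $f=\mu_2$). Flat base change therefore does not give $\mu_b^{X*}f_*\calf\simeq f_*\mu_b^{Y*}\calf$. Concretely, for $f=\mu_b$ and $\calf=\calo_Y$ one has $\mu_b^*\mu_{b*}\calo_X\simeq\calo_X^{\oplus b^{2g}}$, whereas $\mu_{b*}\mu_b^*\calo_X=\mu_{b*}\calo_X\simeq\bigoplus_{\alpha\in\widehat{X}[b]}P_\alpha$; these are not isomorphic. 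What base change actually yields is $\mu_b^{X*}f_*\calf\simeq f'_*g'^*\calf$ for the projections $f',g'$ from the honest fibre product $Y\times_{X,f,\mu_b}X$, which is in general disconnected (and possibly non-reduced in positive characteristic), so it is not the source of an isogeny of abelian varieties to which \ref{eq_pushforward_of_F} applies. Hence the identification of the two test sheaves ``on the nose'' is false and the chain of equivalences breaks.

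This is precisely the difficulty the paper's argument is built to avoid: after reducing to the case where $\calf$ is IT(0) and $x<0$ (by twisting with $f^*L^n$), it passes both sides through the Fourier--Mukai transform, using \autoref{lemma_I_o_to_M_L}, the exchange formula $\Phi_{\calp}^X\circ f_*\simeq\hat{f}^*\circ\Phi_{\calp}^Y$ from \ref{eq_Phi_mu}, the identity $\hat{f}\circ\phi_l\circ f=\phi_{f^*l}$, and the pullback stability \ref{eq_pullback}. If you want to salvage a direct argument, you would have to control the discrepancy between $f_*\mu_b^{Y*}\calf$ and $\mu_b^{X*}f_*\calf$ coming from the kernel of $Y[b]\to X[b]$, and it is not clear this can be done without essentially reproducing the Fourier--Mukai computation.
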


\begin{proof}
For a sufficiently large integer $ n $,
$\calf \otimes f^* L^n $ is IT(0). 
Since
\[
f_* \calf \langle xl\rangle =f_* (\calf \otimes f^* L^n) \langle (x-n)l\rangle , \quad \calf \langle x f^*l \rangle  = \calf \otimes f^* L^n  \langle (x-n) f^*l\rangle ,
\]
we may assume that $\calf$ is IT(0) and $x < 0$
by replacing $\calf $ and $x$ with $\calf \otimes f^* L^n $ and $x-n$ respectively.

Set $y:=-x >0$.
By \autoref{lemma_I_o_to_M_L} and \ref{eq_pullback},
${f}_* \calf \langle xl\rangle = {f}_* \calf \langle -y l\rangle $ is GV, M-regular or IT(0) if and only if 
so is $\phi_l^* \Phi_{\calp}^X ({f}_* \calf  ) \langle \frac{1}{y} l \rangle $
if and only if so is $ f^* \phi_l^* \Phi_{\calp}^X ({f}_* \calf  ) \langle \frac{1}{y} f^* l \rangle $.
We have
\begin{align*}
f^* \phi_l^* \Phi_{\calp}^X ({f}_* \calf  )
= f^* \phi_l^* \hat{f}^* \Phi_{\calp}^Y ( \calf  )
=  \phi_{f^* l}^*  \Phi_{\calp}^Y (\calf  )
\end{align*}
by \ref{eq_Phi_mu} and $ \hat{f} \circ \phi_l \circ f =\phi_{f^* l}$.
Hence
${f}_* \calf \langle xl\rangle $ is GV, M-regular or IT(0) if and only if 
so is $  \phi_{f^* l}^*  \Phi_{\calp}^Y (\calf  ) \langle \frac{1}{y} f^* l \rangle $ if and only if so is $\calf \langle -y f^*l \rangle=\calf \langle x f^*l \rangle$
by \autoref{lemma_I_o_to_M_L}.

The last statement is nothing but the spacial case $f=\mu_m$ since $\mu_m^* l=m^2 l$.
\end{proof}

\section{On jet ampleness}\label{sec_jet-ample}

In this section, we study $k$-jet ampleness using \autoref{lemma_globally_generated}.
Throughout this section,
$L$ is an ample line bundle on an abelian variety $X$.
First, we prove \autoref{thm_jet-ample}.

\begin{proof}[Proof of \autoref{thm_jet-ample}]
If $ \cali_o \langle \frac{1}{k+1} l \rangle$ is M-regular, so is $  \cali_{p_1} \langle \frac{1}{k+1} l \rangle$ for any $p_1 \in X$.
Furthermore, we have $\beta(l) \leq \frac{1}{k+1}$.
Applying \autoref{lemma_globally_generated} (ii) to $x=\frac{1}{k+1}$ and $ \calf \langle yl\rangle =\cali_{p_1} \langle \frac{1}{k+1} l \rangle$,
we see that $ \cali_{p_2}  \cali_{p_1} \langle \frac{2}{k+1} l \rangle$ is IT(0) for any $p_2 \in X$.
We note that $p_2$ can be $p_1$.
Since IT(0) implies M-regularity,
we can apply \autoref{lemma_globally_generated} (ii) to $ \cali_{p_2}  \cali_{p_1} \langle \frac{2}{k+1} l \rangle$
and hence
$ \cali_{p_3}  \cali_{p_2} \cali_{p_1} \langle \frac{3}{k+1} l \rangle$ is IT(0) for any $p_3 \in X$.
Repeating this,
we obtain that $ \cali_{p_{k+1}} \cdots \cali_{p_2} \cali_{p_1} \langle \frac{k+1}{k+1} l \rangle$ is IT(0),
i.e.\ $ \cali_{p_{k+1}} \cdots \cali_{p_2} \cali_{p_1} \otimes L$ is IT(0) for any (not necessarily distinct) $p_1,\dots,p_{k+1} \in X$.
By \cite[Lemma 3.3]{MR2008719},
this is equivalent to the $k$-jet ampleness of $L$.
\end{proof}

In \cite{MR2008719}, the authors introduce an invariant $m(L)$, called the \emph{M-regularity index}, as 
\begin{align*}
m(L) :=\max\{ m \geq 0 \, | \, &L \otimes \cali_{p_1}\cali_{p_2} \cdots \cali_{p_m} \text{ is M-regular} \\ 
&\hspace{10mm} \text{ for any (not necessarily distinct) $m$ points } p_1,\dots,p_m \in X   \}.
\end{align*}
\cite[Theorem 3.8]{MR2008719} states that if $A,L_1,\dots,L_{k+1-m(A)}$ are ample line bundles on $X$ and $k \geq m(A) \geq 1$,
then $A \otimes L_1 \otimes \dots \otimes L_{k+1-m(A)}$ is $k$-jet ample.
In particular, $A^{k+2-m(A)}$ is $k$-jet ample for $k \geq m(L) \geq 1$.
The following is a generalization of this result:

\begin{prop}\label{prop_jet_ampleness2}
Let $n,k_1,\dots,k_n $ be positive integers and
$A,L_1, \dots, L_n$ be ample line bundles on $X$. 
If $\beta(l_i) \leq \frac{1}{k_i}$ for any $1 \leq i \leq n$,
then $A \otimes L_1 \otimes \dots \otimes L_n$ is $k$-jet ample,
where $k= m(A) + \sum_{i=1}^n k_i -1$.
\end{prop}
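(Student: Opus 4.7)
The plan is to mimic the iterative argument used in the proof of \autoref{thm_jet-ample}, but with the $k+1$ points distributed among the factors $A, L_1, \ldots, L_n$ according to the allowance each gives us. By \cite[Lemma 3.3]{MR2008719}, it is enough to show that $\cali_{p_1}\cdots\cali_{p_{k+1}}\otimes(A\otimes L_1\otimes\cdots\otimes L_n)$ is IT(0) for any (not necessarily distinct) points $p_1,\dots,p_{k+1}\in X$, where $k+1=m(A)+\sum_{i=1}^n k_i$.

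Fix such points and partition them into consecutive groups of sizes $m(A), k_1,\dots,k_n$. The initial sheaf $\calf_0:=A\otimes\cali_{p_1}\cdots\cali_{p_{m(A)}}$ is M-regular by the very definition of the M-regularity index. Now I would argue inductively: if $\calf_{i-1}$ has been shown to be M-regular, then viewing it as the $\Q$-twisted sheaf $\calf_{i-1}\langle 0\cdot l_i\rangle$ and applying \autoref{lemma_globally_generated}(ii) to $L=L_i$ with $x=\tfrac{1}{k_i}\geq\beta(l_i)$ and $y=0$ shows that $\cali_{q}\cdot\calf_{i-1}\langle\tfrac{1}{k_i}l_i\rangle$ is IT(0) for any $q\in X$. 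Since IT(0) implies M-regular, the hypothesis of \autoref{lemma_globally_generated}(ii) is reinstated, and one can iterate this $k_i$ times, multiplying by one ideal $\cali_{p}$ at a step (taking the next $k_i$ points in the partition). After these $k_i$ iterations the twist has accumulated to $\tfrac{k_i}{k_i}l_i=l_i$, so the resulting sheaf is $\calf_i:=\calf_{i-1}\otimes L_i\otimes\cali_{q_{i,1}}\cdots\cali_{q_{i,k_i}}$, and this is IT(0) (in particular M-regular, which lets the induction proceed to $L_{i+1}$).

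After running the induction through $i=1,2,\dots,n$, we obtain that
\[
\calf_n= A\otimes L_1\otimes\cdots\otimes L_n\otimes \cali_{p_1}\cdots\cali_{p_{k+1}}
\]
is IT(0), which by \cite[Lemma 3.3]{MR2008719} is equivalent to the $k$-jet ampleness of $A\otimes L_1\otimes\cdots\otimes L_n$.

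The only subtle point is to make sure that switching the reference ample class (from $l_i$ to $l_{i+1}$) between the $n$ rounds is harmless; this is fine because at each transition we only use the fact that $\calf_i$ is a genuine sheaf which is M-regular, a property that does not depend on the chosen polarization, and then we restart with the trivial twist $\calf_i\langle 0\cdot l_{i+1}\rangle$. Everything else is routine bookkeeping of twists and repeated use of \autoref{lemma_globally_generated}(ii) together with the implication IT(0) $\Rightarrow$ M-regular.
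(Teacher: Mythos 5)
Your proposal is correct and follows essentially the same route as the paper: start from the M-regularity of $A\otimes\cali_{p_1}\cdots\cali_{p_{m(A)}}$ (the definition of $m(A)$), repeatedly apply \autoref{lemma_globally_generated}(ii) to absorb one ideal sheaf at a time while accumulating twists $\tfrac{j}{k_i}l_i$ until each reaches the honest line bundle $L_i$, and conclude with \cite[Lemma 3.3]{MR2008719}. Your explicit remark that the change of reference polarization between rounds is harmless because each $\calf_i$ is an untwisted M-regular sheaf is a point the paper leaves implicit, but the argument is the same.
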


\begin{proof}
For simplicity, set $m=m(A)$.
By definition,
$A \otimes \cali_{p_1}\cali_{p_2} \cdots \cali_{p_m}$ is M-regular for any $p_i \in X$.
By \autoref{lemma_globally_generated} (ii),
$A \otimes \cali_{p_1}\cali_{p_2} \cdots \cali_{p_m} \cali_{p_{m+1}} \langle \frac{1}{k_1} l_1 \rangle$ is IT(0) for any $p_{m+1} \in X$.
Since IT(0) implies M-regularity,
$A \otimes \cali_{p_1}\cali_{p_2} \cdots \cali_{p_m} \cali_{p_{m+1}}\cali_{p_{m+2}}  \langle \frac{2}{k_1} l_1 \rangle$ is IT(0) for any $p_{m+2} \in X$ by \autoref{lemma_globally_generated} (ii).
Repeating this,
$A \otimes \cali_{p_1}\cali_{p_2} \cdots \cali_{p_{m+k_1}}  \langle \frac{k_1}{k_1} l_1 \rangle = A \otimes L_1 \otimes \cali_{p_1}\cali_{p_2} \cdots \cali_{p_{m+k_1}} $ is IT(0)  for any $p_i$.
Repeating this argument,
$A \otimes L_1 \otimes \dots \otimes L_n \otimes  \cali_{p_1}\cali_{p_2} \cdots \cali_{p_{m+\sum{k_i}}}
= A \otimes L_1 \otimes \dots \otimes L_n \otimes  \cali_{p_1}\cali_{p_2} \cdots \cali_{p_{k+1}}$ is IT(0) for any $p_i \in X $.
By \cite[Lemma 3.3]{MR2008719},
this is equivalent to the $k$-jet ampleness of  $A \otimes L_1 \otimes \dots \otimes L_n$.
\end{proof}

We note that $A \otimes L_1 \otimes \dots \otimes L_n$ in \autoref{prop_jet_ampleness2} is a tensor product of two or more ample line bundles
since we assume $n \geq 1$.
Hence \autoref{prop_jet_ampleness2} do not contain \autoref{thm_jet-ample}. 

\vspace{2mm}
In the rest of this section, 
we see some relation between $m(L) $, the M-regularity of $\cali_o \langle x l \rangle$,
and Seshadri constants.

\begin{cor}\label{cor_m(L)}
If $\cali_o \langle \frac{1}{m} l \rangle$ is M-regular for an integer $ m \geq 1$,
then $m(L) \geq m$  holds.
\end{cor}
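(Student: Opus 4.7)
The plan is to imitate the inductive pattern already used in the proofs of \autoref{thm_jet-ample} and \autoref{prop_jet_ampleness2}, feeding the hypothesis into \autoref{lemma_globally_generated} (ii) exactly $m$ times with the constant $x=1/m$.

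First I would verify the two preliminary facts needed for this machine to start. Since $\cali_o\langle\frac{1}{m}l\rangle$ is M-regular it is GV, so by \autoref{lem_beta_IT(0)} (and the remark immediately after it) we have $\beta(l)\leq 1/m$; hence $x=1/m$ is an admissible choice for applying \autoref{lemma_globally_generated} (ii). Moreover, pulling back by a translation $t_p$ preserves M-regularity and sends $\cali_o$ to $\cali_p$ while fixing the class $l$, so $\cali_{p_1}\langle\tfrac{1}{m}l\rangle$ is M-regular for every $p_1\in X$.

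Next I would run the induction. With $\calf=\cali_{p_1}$, $y=1/m$ and $x=1/m$, \autoref{lemma_globally_generated} (ii) gives that $(\cali_{p_2}\cdot\cali_{p_1})\langle\frac{2}{m}l\rangle$ is IT(0) for every $p_2\in X$. Since IT(0) implies M-regular, I can reapply (ii) with $\calf=\cali_{p_2}\cali_{p_1}$ and $y=2/m$, obtaining that $(\cali_{p_3}\cali_{p_2}\cali_{p_1})\langle\frac{3}{m}l\rangle$ is IT(0). Iterating $m-1$ times, at the $k$-th step I get
\[
(\cali_{p_k}\cdots\cali_{p_1})\Big\langle\tfrac{k}{m}l\Big\rangle \text{ is IT(0) for any } p_1,\dots,p_k\in X,
\]
and at $k=m$ this is precisely $L\otimes\cali_{p_1}\cdots\cali_{p_m}$, IT(0) hence M-regular, for all choices of (not necessarily distinct) points. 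By the definition of the M-regularity index this yields $m(L)\geq m$.

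There is essentially no obstacle here: the proof is a verbatim iteration of the argument used for \autoref{thm_jet-ample}, and the only point requiring a line of justification is the base case, where one must pass from the stated M-regularity of $\cali_o\langle\tfrac{1}{m}l\rangle$ to that of $\cali_{p_1}\langle\tfrac{1}{m}l\rangle$ by translation invariance so that \autoref{lemma_globally_generated} (ii) can be applied for an arbitrary first point $p_1$.
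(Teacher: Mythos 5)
Your proof is correct and is essentially the paper's own argument: the paper simply notes that the required fact --- $\cali_{p_m}\cdots\cali_{p_1}\otimes L$ is IT(0), hence M-regular, for all choices of points --- was already established by the iteration of \autoref{lemma_globally_generated}~(ii) carried out in the proof of \autoref{thm_jet-ample}, which is exactly the iteration you reproduce (the paper treats $m=1$ separately as immediate from the definition, matching your base case, where the conclusion is only M-regularity rather than IT(0)).
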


\begin{proof}
If $m=1$,
this holds from definition.
Hence we may assume that $m \geq 2$.
Then we already see that $ \cali_{p_{m}} \cdots \cali_{p_2} \cali_{p_1} \otimes L$ is IT(0) for any $p_1,\dots,p_{m} \in X$ in the proof of \autoref{thm_jet-ample}.
Hence $m(L) \geq m $ holds.
\end{proof}

In general, the converse of \autoref{cor_m(L)} does not hold,
that is, $\cali_o \langle \frac{1}{m(L)} l \rangle$ is not M-regular in general.
For example,
let $(X,L)$ be a general polarized abelian surface of type $(1,4)$.
By \cite[Example 3.7]{MR2008719},
we have $m(L)=2$.
On the other hand,
$\cali_o \langle \frac{1}{2} l \rangle$  is not M-regular by \autoref{thm_jet-ample} since $ L$ is not very ample.

In fact, $\cali_o \langle \frac{1}{m(L)} l \rangle$ is not even GV, equivalently $\beta(l) \leq 1/m(L)$ does not hold in general.
For example,
let $(X,L)$ be a polarized abelian surface of type $(1,d)$ with Picard number one.
Then $L$ is $k$-jet ample if and only if $d > \frac12 (k+2)^2$ by \cite{MR1376538}.
Hence we have $m(L) \geq k+1$ if $d > \frac12 (k+2)^2$ by \cite[Proposition 3.5]{MR2008719}.
Thus $m(L) \geq \lfloor \sqrt{2d-1} \rfloor -1$.
On the other hand,
$\beta(l) \geq 1/\sqrt{d}$ holds by \cite[Lemma 3.4]{Ito:2020aa}.
Hence $1/m(L) $, which is bounded from above by roughly $1/\sqrt{2d}$, is strictly smaller than $\beta(l)$ for $d \gg 1$.

However, we can show that $\cali_o \langle \frac{\dim X}{m(L)} l \rangle$ is IT(0) at least in $\mathrm{char} (\K) =0$.
To see this,
recall that the \emph{Seshadri constant} $\ep(L) $ of $L$ is defined as
\[
\ep(X,L)=\ep(L)  :=\max\{  t \geq 0 \, | \, \pi^* L -tE \text{ is nef} \},
\]
where $\pi :\tilde{X} \rightarrow  X$ is the blow-up at $o \in X$ and $E \subset \tilde{X} $ is the exceptional divisor.
By \cite[Theorem 7.4]{MR2807853},
$\ep(L) \geq m(L)$ holds.
On the other hand,
$\beta(l) \cdot \ep(L) \geq 1$ holds by \cite[Proposition 1.6.7]{CaucciThesis},
that is, $\ep(L) > x^{-1}$ holds if 
$ \cali_o  \langle x l \rangle$ is IT(0).
The following is a refinement of these results:


\begin{prop}\label{lem_seshadri}
Assume $\mathrm{char} (\K) =0$.
\begin{enumerate}
\item[(i)] If $ \cali_o^m  \langle x l \rangle$ is M-regular for an integer $m\geq 0$ and $x \in \Q_{>0}$,
then 
$ \ep(L) > x^{-1} \cdot m $ holds.
\item[(ii)] If $ \cali_o^m  \langle x l \rangle$ is GV for an integer $m\geq 0$ and $x \in \Q_{>0}$,
then 
$ \ep(L)  \geq x^{-1} \cdot m $ holds.
\item[(iii)] 
$\ep(L) > m(L)$ holds.
\item[(iv)]
If $m(L) \geq 1$,
$\cali_o \langle \frac{g}{m(L)} l \rangle$ is IT(0),
where $g=\dim X$.
\end{enumerate}
\end{prop}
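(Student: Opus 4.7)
The plan is to prove (i) first, deduce (ii) from it by approximation, obtain (iii) as an immediate consequence, and establish (iv) via Kawamata--Viehweg vanishing on a blow-up.

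For (i), I would translate the $\Q$-twisted M-regularity of $\cali_o^m \langle xl \rangle$ into nefness and bigness of an explicit $\Q$-Cartier divisor on the blow-up of $X$ at $o$. Write $x=a/b$ with $a,b \in \Z_{>0}$. By definition, M-regularity of $\cali_o^m\langle xl\rangle$ is equivalent to M-regularity of $\mu_b^*\cali_o^m \otimes L^{ab} = \cali_{X[b]}^m \otimes L^{ab}$ on $X$, using that $\mu_b$ is étale in characteristic $0$. Let $\pi: Y \to X$ be the blow-up along $X[b]$ with exceptional divisor $\tilde E = \sum_{p\in X[b]} E_p$. Standard blow-up vanishing gives $R^i\pi_*\calo_Y(-m\tilde E)=0$ for $i\geq 1$ and $\pi_*\calo_Y(-m\tilde E)=\cali_{X[b]}^m$, so by Leray the M-regularity condition can be read as a cohomological condition for $\pi^*L^{ab}\otimes\calo_Y(-m\tilde E)\otimes\pi^*P_\alpha$. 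The map $\mu_b$ lifts to an étale cover $\tilde\mu_b: Y \to Y_o$ (where $\pi_o: Y_o\to X$ is the blow-up at $o$) satisfying $\tilde\mu_b^* E_o = \tilde E$ and $\tilde\mu_b^*\pi_o^* L \equiv \pi^* L^{b^2}$, hence $\pi^*L^{ab}-m\tilde E \equiv \tilde\mu_b^*(\pi_o^*L^{a/b}-mE_o)$. The core step is then to show that the M-regularity hypothesis forces $\pi_o^*L^{a/b}-mE_o$ on $Y_o$ to be nef and big, which is equivalent to $\ep(L)>m/x$. This is the natural multiplicity-$m$ generalization of Caucci's $\beta(l)\ep(L)\geq 1$ (recovered at $m=1$), and I expect it to follow via the cohomological rank function machinery of Jiang--Pareschi: M-regularity at the threshold $x$ corresponds to the sub-leading asymptotics $h^i_{\cali_o^m}((x-t)l)=O(t^{i+1})$ as $t\to 0^+$, and this refined asymptotic should translate (via the étale cover $\tilde\mu_b$) into strict positivity of the associated $\Q$-Cartier class on the blow-up.

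Part (ii) follows from (i) by a limiting argument: by \ref{eq_GV_IT(0)} applied to $\calf = \cali_o^m$, if $\cali_o^m\langle xl\rangle$ is GV, then $\cali_o^m\langle (x+\epsilon)l\rangle$ is IT(0), hence M-regular, for every rational $\epsilon>0$, so (i) gives $\ep(L)>m/(x+\epsilon)$; letting $\epsilon\to 0^+$ yields $\ep(L)\geq m/x$. Part (iii) is immediate from (i) with $x=1$ and $m=m(L)$: taking $p_1=\cdots=p_{m(L)}=o$ in the definition of $m(L)$, the sheaf $\cali_o^{m(L)}\otimes L = \cali_o^{m(L)}\langle l\rangle$ is M-regular, and (i) yields $\ep(L)>m(L)$.

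For (iv), assume $m(L)\geq 1$, set $b:=m(L)$, and let $\pi:Y\to X$ be the blow-up at $X[b]$ with $\tilde E=\sum_{p\in X[b]} E_p$. I would show that $D:=\pi^*L^{gb}-g\tilde E$ on $Y$ is nef and big. Nefness descends via $\tilde\mu_b$ from nefness of $\pi_o^*L^{g/b}-gE_o$ on $Y_o$, which amounts to $\ep(L)\geq b=m(L)$, holding by (iii). Bigness reduces via étale descent to positivity of the top self-intersection $(\pi_o^*L^{g/b}-gE_o)^g=(g/b)^g L^g-g^g$ on $Y_o$ (using $E_o^g=(-1)^{g-1}$ and vanishing of the mixed terms $\pi_o^*L^{g-j}\cdot E_o^j$ for $1\leq j\leq g-1$), i.e.\ to $L^g>m(L)^g$; this in turn follows from the standard Seshadri bound $\ep(L)^g\leq L^g$ combined with (iii). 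Since $K_X\equiv 0$ we have $K_Y=(g-1)\tilde E$, and Kawamata--Viehweg vanishing applied to $D+\pi^*P_\alpha$ gives $H^i(Y,\pi^*L^{gb}\otimes\pi^*P_\alpha\otimes\calo_Y(-\tilde E))=0$ for all $i>0$ and $\alpha\in\widehat{X}$; by Leray this is the vanishing $H^i(X,\cali_{X[b]}\otimes L^{gb}\otimes P_\alpha)=0$, which is precisely the IT(0) of $\cali_o\langle (g/m(L))l\rangle$.

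The main obstacle will be the core of (i): rigorously showing that M-regularity of $\cali_o^m\langle xl\rangle$ on $X$ forces nefness and bigness of the corresponding $\Q$-Cartier class $\pi_o^*L^{a/b}-mE_o$ on $Y_o$. This is the step that genuinely extends Caucci's inequality from multiplicity one to arbitrary $m$, and most likely requires a careful use of the Jiang--Pareschi cohomological rank functions together with the étale-descent picture above; the remaining parts (ii), (iii), (iv) then follow from (i) by fairly routine arguments once (i) is in place.
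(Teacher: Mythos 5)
Your reductions for (ii) and (iii) coincide with the paper's, and your Kawamata--Viehweg argument for (iv) is a valid, self-contained alternative to the paper's route (the paper instead quotes the inequality $\beta(l)\leq g\cdot\ep(L)^{-1}$ from the author's earlier work and concludes via the characterization of $\beta(l)$ by IT(0)). But everything rests on (i), and there your proposal has a genuine gap: the ``core step'' --- that M-regularity of $\cali_o^m\langle xl\rangle$ forces positivity of $\pi_o^*xL-mE_o$ on the blow-up --- is exactly the content of the statement, and you leave it as an expectation about cohomological rank function asymptotics rather than proving it. Moreover, the target you aim for is too weak: you propose to show that $\pi_o^*xL-mE_o$ is nef and big and assert this is ``equivalent to $\ep(L)>m/x$,'' but nef and big only yields $\ep(L)\geq m/x$ (the class can lie on the boundary of the nef cone while still being big, e.g.\ when the Seshadri constant is computed by a curve through $o$ on which the class has degree zero); the strict inequality in (i) requires the class to be in the interior of the nef cone, i.e.\ ample.

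The mechanism the paper uses, which is the idea missing from your sketch, is sheaf-theoretic rather than asymptotic: an M-regular sheaf on an abelian variety is continuously globally generated \cite[Proposition 2.13]{MR1949161}, and a continuously globally generated sheaf is ample in the sense that the tautological bundle $\calo(1)$ on its projectivization is ample \cite[Corollary 3.2]{MR2233707}. Applying this to $\mu_b^*\cali_o^m\otimes L^{ab}$ and observing that
\[
\Proj_X\Bigl(\bigoplus_{n\geq 0}\Sym^n(\mu_b^*\cali_o^m\otimes L^{ab})\Bigr)\simeq \Proj_X\Bigl(\bigoplus_{n\geq 0}\Sym^n(\mu_b^*\cali_o)\Bigr)
\]
is the blow-up $\pi'\colon\tilde X'\to X$ along $\mu_b^*\cali_o$ with $\calo(1)=\pi'^*L^{ab}\otimes\calo(-mE')$, one concludes that $\pi'^*abL-mE'\equiv\tilde\mu_b^*(\pi^* xL-mE)$ is ample, hence so is $\pi^*xL-mE$ by finiteness of $\tilde\mu_b$; this is what delivers the strict inequality $\ep(L)>x^{-1}m$. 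If you replace your ``core step'' by this argument, the remaining parts of your write-up go through.
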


\begin{proof}
(i) 
Since $\ep(L) >0$, this is clear if $m=0$.
Hence we may assume $m \geq 1$.
Let $\pi :\tilde{X} \rightarrow  X$ be the blow-up at $o \in X$ and $E \subset \tilde{X} $ be the exceptional divisor.
What we need to show is the ampleness of $\pi^* L -  x^{-1} \cdot m E$, i.e.\ the ampleness of $ \pi^* xL -   m E$.

Let $x=a/b$.
Then $\mu_b^* \cali_o^m \otimes L^{ab}$ is M-regular.
Hence $\mu_b^* \cali_o^m \otimes L^{ab}$ is ample,
that is, the tautological line bundle $\calo(1)$ of 
\[
 \P_X( \mu_b^* \cali_o^m \otimes L^{ab}) :=\Proj_X \left( \bigoplus_{n \geq 0} \Sym^n  (\mu_b^* \cali_o^m \otimes L^{ab}) \right)
\]
is ample
by \cite[Proposition 2.13]{MR1949161} and \cite[Corollary 3.2]{MR2233707}.
Since 
\[
\Proj_X \left( \bigoplus_{n \geq 0} \Sym^n  (\mu_b^* \cali_o^m \otimes L^{ab}) \right) 
\simeq \Proj_X \left( \bigoplus_{n \geq 0} \Sym^n  (\mu_b^* \cali_o^m ) \right)
\simeq \Proj_X \left( \bigoplus_{n \geq 0} \Sym^n  (\mu_b^* \cali_o ) \right),
\]
$ \P_X( \mu_b^* \cali_o^m \otimes L^{ab}) \rightarrow X$ is isomorphic to the blow-up $\pi' : \tilde{X}' \rightarrow X$ along the ideal $\mu_b^* \cali_o $.
Under this isomorphism,
the tautological line bundle $\calo(1)$ on $ \P_X( \mu_b^* \cali_o^m \otimes L^{ab})$ corresponds to $ \calo(-mE') \otimes \pi'^* L^{ab}$,
where $E' \subset \tilde{X}' $ is the exceptional divisor of $\pi'$.
Hence $  \pi'^* ab L  -mE'  $ is ample.

Since $\pi'$ is the blow-up along $\mu_b^* \cali_o $,
there exists a morphism $ \tilde{\mu}_b :  \tilde{X}'  \rightarrow \tilde{X}$ such that 
$\pi \circ \tilde{\mu}_b = \mu_b \circ \pi' $ and $\tilde{\mu}_b^* \calo(-E) = \calo(-E')$.
Hence we have $  \pi'^* ab L  -mE' \equiv \tilde{\mu}_b^*  (  \pi^* xL -   m E)$
and the ampleness of $\pi^* xL -   m E$ follows from that of $  \pi'^* ab L  -mE'  $. 

\vspace{1mm}
\noindent
(ii) 
If $\cali_o^m \langle  xl \rangle$ is GV, then $\cali_o^m \langle  (x+x' ) l \rangle$ is IT(0) for any rational number $x' >0 $.
Hence $\ep(L) > (x+x')^{-1} \cdot m$ holds by (1).
By $x' \rightarrow 0$, we have $\ep(L) \geq x^{-1} \cdot m$.

\vspace{1mm}
\noindent
(iii) 
By definition, $\cali_o^{m(L)} \langle l \rangle$ is M-regular
and hence $\ep(L) > m(L)$ holds by (1).

\vspace{1mm}
\noindent
(iv)
By \cite[Proposition 3.1]{Ito:2020aa} and (3),
we have $\beta(l)  \leq g \cdot \ep(L)^{-1} < g \cdot m(L)^{-1}$.
Hence (4) follows from \autoref{lem_beta_IT(0)}.
\end{proof}

\begin{ex}
(1) By definition, $\cali_o^2 \langle l \rangle$ is 
GV (resp.\ M-regular) if and only if the codimension of 
$
\{ p \in X \,  | \,   h^1(X, \cali_p^2 \otimes L)  > 0 \}
$
in $X$ is at least one (resp.\ at least two).
Since $h^1(X, \cali_p^2 \otimes L)  =0 $ holds if and only if the rational map $f_{|L|}$ defined by $|L|$ is an immersion at $p$,
we have $\ep(L) \geq 2 $ if $ f_{|L|} $ is generically finite, and $\ep(L) > 2 $ if $f_{|L|}$ is an immersion outside a codimension two subset
by \autoref{lem_seshadri} (i), (ii)
in $\Char (\K)=0$.

\noindent
(2)
In \cite[Theorem 1.1, Lemma 2.6]{MR1393263},
Nakamaye proves that for a polarized abelian variety $(X,L)$ in $ \Char(\K)=0$, $\ep(L) \geq 1$ and the equality holds if and only if 
\begin{align}\label{eq_decomp}
 (X,L) \simeq (E, L_E) \times  (X', L') :=(E \times X', p_E^* L_E \otimes p_{X'}^* L')
\end{align}
for a principally polarized elliptic curve $(E,L_E)$ and a polarized abelian variety $(X',L')$.
We can recover this result from \autoref{lem_seshadri} as follows:

We show this by the induction on $g=\dim X$.
If $g=1$, this is clear since $\ep(X,L) = \deg (L)$ by definition.
Assume $g \geq 2$.

If $(X,L) \simeq (E, L_E) \times  (X', L') $ as \ref{eq_decomp}, then we have  
$ \ep(X,L) =\min \{ \ep(E,L_E), \ep(X',L')\}$ (see \cite[Proposition 3.4]{MR3338009} for example).
Since $\ep(E,L_E)=\deg L_E=1 $ 
and $ \ep(X',L') \geq 1$ by induction hypothesis,
we have $ \ep(X,L) =1$.

Conversely, assume $\ep(L) \leq 1$.
Then 
$ m(L) =0$ by \autoref{lem_seshadri} (ii)
and hence $L$ has a base divisor.

Assume that $(X,L)$ is indecomposable,
that is, $(X,L) $ is not isomorphic to any product 
of polarized abelian varieties of positive dimensions.
Then $L $ is an indecomposable principal polarization by 
\cite[Theorem 4.3.1]{MR2062673}
since $L$ has a base divisor.
Replacing $L$ with an algebraically equivalent line bundle,
we may assume that $L $ is symmetric.
Then the morphism $f_{|L^2|}$ defined by $|L^{2}|$ is the natural morphism 
$\pi : X \rightarrow X/\langle -1_X\rangle \subset \P^{2^g-1}$  to the Kummer variety $ X/\langle -1_X \rangle $ 
by \cite[Theorem 4.8.1]{MR2062673}.
Since $\pi$ is an immersion outside two-torsion points in $X$ and $g \geq 2$,
we have $2 \ep(L)=\ep(L^2) > 2$ by (1) of this example.
This contradicts the assumption $ \ep(L) \leq 1$.

Thus $(X,L)$ is decomposable, that is, $(X,L) $ is isomorphic to a product $ (X_1,L_1) \times (X_2,L_2)$ with $\dim X_i \geq 1$.
Then $\ep(X,L)= \min\{ \ep(X_1,L_1) ,\ep(X_2,L_2)\}$ 
and hence we may assume $ \ep(X_1,L_1) =\ep(X,L) \leq 1$.
Since $\ep(X_1,L_1)  \geq 1$ by induction hypothesis, $ \ep(X_1,L_1) =1$ holds and 
$(X_1,L_1) $ has a decomposition as \ref{eq_decomp}.
Hence $\ep(X,L) =\ep(X_1,L_1) =1 $ and $(X,L)$ has a decomposition as \ref{eq_decomp}.



\end{ex}

\section{On property $(N_p)$}\label{sec_proof of theorem}

Throughout this section,
$L$ is an ample line bundle on an abelian variety  $X$.
If $\cali_o \langle x l \rangle$ is M-regular for some rational number $0< x <1$,
then $L$ is basepoint free by \autoref{thm_JP_Caucci} (1).
Furthermore,
$M_L \langle \frac{x}{1-x} l \rangle$ is M-regular by \autoref{lemma_I_o_to_M_L}.
Hence $M_L^{\otimes m} \langle \frac{mx}{1-x} l \rangle$ is also M-regular for any $m \geq 1$ by \autoref{prop_GV+IT(0)}.
In fact, we can show that $M_L^{\otimes m} \langle \frac{mx}{1-x} l \rangle$ is IT(0) if $m \geq 2$ as follows: 

\begin{prop}\label{lem_M^2}
Assume that $\cali_o \langle x l \rangle$ is M-regular for a rational number $0 < x <1$.
Then $M_L^{\otimes m} \langle \frac{mx}{1-x} l \rangle$ is IT(0) for any integer $m \geq 2$.
\end{prop}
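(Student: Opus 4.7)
The plan is to proceed by induction on $m \ge 2$, combining the defining exact sequence of $M_L$ with the surjectivity criterion \autoref{thm_criterion}(2). First, M-regularity of $\cali_o\langle xl\rangle$ implies $\beta(l) \le x < 1$, so by \autoref{thm_JP_Caucci}(1) $L$ is basepoint free and $M_L$ fits in \ref{eq_M_L}; by \autoref{lemma_I_o_to_M_L}, $M_L\langle \tfrac{x}{1-x}l\rangle$ is M-regular.

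For each $m$, I would tensor \ref{eq_M_L} by $M_L^{\otimes (m-1)}$ to obtain
\[
0 \to M_L^{\otimes m} \to H^0(L) \otimes M_L^{\otimes (m-1)} \to M_L^{\otimes (m-1)} \otimes L \to 0,
\]
and view it as an exact sequence of $\Q$-twisted sheaves carrying the common twist $\tfrac{mx}{1-x}l$. The inductive hypothesis is that $M_L^{\otimes (m-1)}\langle \tfrac{(m-1)x}{1-x}l\rangle$ is M-regular; this holds at $m = 2$ by the setup above, and at each subsequent step because IT(0) (the inductive conclusion) implies M-regular. Since $\tfrac{x}{1-x} > 0$, the $\Q$-twist $\calo_X\langle \tfrac{x}{1-x}l\rangle$ is IT(0), so \autoref{prop_GV+IT(0)}(i) combined with the (GV) sheaf $M_L^{\otimes (m-1)}\langle \tfrac{(m-1)x}{1-x}l\rangle$ shows that the middle term $M_L^{\otimes (m-1)}\langle \tfrac{mx}{1-x}l\rangle$ and the right term $M_L^{\otimes (m-1)}\langle \tfrac{(m-1)x+1}{1-x}l\rangle$ of the twisted sequence are both IT(0).

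The long exact sequence in cohomology, taken after a further twist by any $P_\alpha \in \dX$, then reduces IT(0) of $M_L^{\otimes m}\langle \tfrac{mx}{1-x}l\rangle$ to the surjectivity, for every $\alpha$, of
\[
H^0(L)\otimes H^0\bigl(M_L^{\otimes (m-1)}\langle \tfrac{mx}{1-x}l\rangle \otimes P_\alpha\bigr) \to H^0\bigl((M_L^{\otimes (m-1)}\otimes L)\langle \tfrac{mx}{1-x}l\rangle \otimes P_\alpha\bigr).
\]
I would obtain this from \autoref{thm_criterion}(2), applied with our $x$ (which satisfies $\beta(l)\le x<1$) to $\calf := M_L^{\otimes (m-1)}\langle \tfrac{mx}{1-x}l\rangle \otimes P_\alpha$: its hypothesis that $\calf\langle -\tfrac{x}{1-x}l\rangle = M_L^{\otimes (m-1)}\langle \tfrac{(m-1)x}{1-x}l\rangle\otimes P_\alpha$ be M-regular is exactly the inductive assumption, since tensoring by the numerically trivial $P_\alpha$ preserves M-regularity.

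The main obstacle will be to give literal sense to the cohomology of the $\Q$-twisted sheaves above and to the multiplication map invoked in \autoref{thm_criterion}(2); I plan to handle this by pulling the whole diagram back under an isogeny $\mu_{b'}\colon X \to X$, with $b'$ chosen so that $\tfrac{x}{1-x}$ becomes an integer on the cover, turning the $\Q$-twists into genuine line-bundle twists and transferring the IT(0)/M-regularity statements between $X$ and its cover via \autoref{lem_pushforward}.
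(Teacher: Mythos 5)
Your argument is correct and is essentially the paper's own proof: the same exact sequence obtained by tensoring \ref{eq_M_L}, the same use of \autoref{prop_GV+IT(0)} to make the middle and right terms IT(0) and thereby reduce IT(0) of $M_L^{\otimes m}$ to the surjectivity of multiplication by $H^0(L)$, and the same appeals to \autoref{thm_criterion}~(2) and \autoref{lem_pushforward}; the paper merely reduces to $m=2$ first (again via \autoref{prop_GV+IT(0)}) instead of running an induction on $m$, and clears denominators from the outset by working with the honest sheaf $M_L^{\otimes 2}\otimes {\mu_a}_* L^{2a(b-a)}$ on $X$ rather than pulling the whole diagram back to a cover. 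The one point to execute carefully in your last step is that on the cover the relevant map only involves the pulled-back sections $\mu_{b'}^* H^0(L)\subset H^0(\mu_{b'}^*L)$, so you must push forward again (projection formula) to identify it with the genuine multiplication map $H^0(L)\otimes H^0(\calg)\to H^0(L\otimes\calg)$ on $X$ before invoking \autoref{thm_criterion}~(2) --- which is exactly what the paper's formulation via ${\mu_a}_*$ builds in from the start.
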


\begin{proof}
Let $1-x=\frac{a}{b}$ for integers $b > a > 0$. 
Then 
\begin{align}\label{eq_M-regular}
M_L  \left\langle \frac{x}{1-x} l \right\rangle  =M_L \left \langle \frac{b-a}{a} l \right \rangle \  \text{ and } \ \mu_a^* M_L \otimes L^{a(b-a)} \ \text{ are M-regular}
\end{align}
by \autoref{lemma_I_o_to_M_L}.
If $M_L^{\otimes 2} \langle \frac{2x}{1-x} l \rangle$ is IT(0),
so is 
\[
M_L^{\otimes m} \left \langle \frac{mx}{1-x} l \right \rangle = 
M_L^{\otimes 2} \left  \langle \frac{2x}{1-x} l  \right \rangle \otimes \left(M_L \left \langle \frac{x}{1-x} l \right \rangle \right)^{\otimes m-2}
\]
for $m \geq 2$
by \autoref{prop_GV+IT(0)}
since $M_L^{\otimes 2} \langle \frac{2x}{1-x} l \rangle$ is IT(0) and $M_L \langle \frac{x}{1-x} l \rangle  $  is GV.
Hence it suffices to show the case $m=2$.
\vspace{1mm}

By definition and  \ref{eq_pushforward_of_F},
$M_L^{\otimes 2} \langle \frac{2x}{1-x} l \rangle= M_L^{\otimes 2} \langle \frac{2(b-a)}{a} l \rangle$ is IT(0) if and only if so is
$\mu_{a}^* M_L^{\otimes 2} \otimes L^{2a(b-a)}$
if and only if so is
${\mu_{a}}_* (\mu_{a}^* M_L^{\otimes 2} \otimes L^{2a(b-a)}) = M_L^{\otimes 2} \otimes {\mu_{a}}_* L^{2a(b-a)}$.
Consider the exact sequence
\begin{align*}\label{eq_pullback}
0 \rightarrow M_L^{\otimes 2} \otimes {\mu_{a}}_* L^{2a(b-a)}  \rightarrow H^0(L) \otimes M_L \otimes {\mu_{a}}_* L^{2a(b-a)}
\rightarrow L \otimes M_L \otimes {\mu_{a}}_* L^{2a(b-a)} \rightarrow 0
\end{align*}
obtained by tensoring $M_L \otimes {\mu_{a}}_* L^{2a(b-a)}$ with \ref{eq_M_L}.
Since $\mu_a^* M_L \otimes L^{a(b-a)} $ is M-regular by \ref{eq_M-regular},
$\mu_a^* M_L \otimes L^{2a(b-a)} $ is IT(0).
Hence $M_L \otimes {\mu_{a}}_* L^{2a(b-a)} ={\mu_{a}}_* ( \mu_{a}^* M_L \otimes L^{2a(b-a)}) $ and $L \otimes M_L \otimes {\mu_{a}}_* L^{2a(b-a)}$ are IT(0) by \ref{eq_pushforward_of_F} and \autoref{prop_GV+IT(0)}.
Thus $h^i( M_L^{\otimes 2} \otimes {\mu_{a}}_* L^{2a(b-a)} \otimes P_{\alpha}) =0$ for any $i \geq 2$ and $\alpha \in \widehat{X}$,
and $M_L^{\otimes 2} \otimes {\mu_{a}}_* L^{2a(b-a)}$ is IT(0) if and only if 
the natural map
\[
H^0(L) \otimes H^0(M_L \otimes {\mu_{a}}_* L^{2a(b-a)} \otimes P_{\alpha})
\rightarrow H^0(L \otimes M_L \otimes {\mu_{a}}_* L^{2a(b-a)} \otimes P_{\alpha})
\]
is surjective for any $\alpha \in \dX$.
By \autoref{thm_criterion} (2),
this map is surjective if
\begin{align*}
M_L \otimes {\mu_{a}}_* L^{2a(b-a)} \otimes P_{\alpha} \left \langle -\frac{x}{1-x} l \right \rangle 
&=M_L \otimes {\mu_{a}}_* L^{2a(b-a)} \otimes P_{\alpha} \left  \langle -\frac{b-a}{a} l \right  \rangle \\
&={\mu_a}_* (\mu_a^* M_L \otimes L^{2a(b-a)} \otimes \mu_a^* P_{\alpha} )  \left \langle -\frac{b-a}{a} l \right  \rangle
\end{align*}
is M-regular.
By \autoref{lem_pushforward},
this is equivalent to the M-regularity of 
\[
\mu_a^* M_L \otimes L^{2a(b-a)} \otimes  \mu_a^* P_{\alpha}  \left  \langle - a^2\frac{b-a}{a} l \right\rangle = \mu_a^* M_L   \left  \langle a(b-a) l \right\rangle,
\]
which is nothing but \ref{eq_M-regular}. 
\end{proof}


\begin{prop}\label{thm_vanishing}
Assume that $\cali_o \langle x l \rangle$ is M-regular for a rational number $0 < x <1$.
Then $h^i( M_L^{\otimes m} \otimes B)=0$ for an integer $m \geq 1$ and a line bundle $B$ on $X$
if $ B - \frac{mx}{1-x}L$ is ample or $m \geq 2$ and $ B - \frac{mx}{1-x}L$ is nef.

In particular,
if  $ \cali_o \langle \frac{1}{p+2} l \rangle$ is M-regular for an integer $p \geq 0$,
then $h^i( M_L^{\otimes m} \otimes L^h)=0$ for positive integers $m,h $
if 
$h > m/(p+1)$ or $m \geq 2$ and $h \geq m/(p+1)$.
\end{prop}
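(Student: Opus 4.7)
The plan is to realize $M_L^{\otimes m}\otimes B$ as the tensor product of two $\Q$-twisted sheaves whose twists cancel, and then invoke \autoref{prop_GV+IT(0)} (i) to conclude that this product is IT(0), which forces vanishing of all higher cohomology. Setting $t:=\frac{mx}{1-x}$, the relevant identity in the $\Q$-twisted category is
\[
M_L^{\otimes m}\otimes B \ =\ \bigl(M_L^{\otimes m}\langle t\, l\rangle\bigr)\otimes \bigl(B\langle -t\, l\rangle\bigr).
\]
Since $M_L^{\otimes m}$ is locally free, \autoref{prop_GV+IT(0)} (i) applies provided one factor is IT(0) and the other is GV.

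For the first factor, \autoref{lemma_I_o_to_M_L} turns the M-regularity of $\cali_o\langle x\, l\rangle$ into M-regularity (hence GV) of $M_L\langle\tfrac{x}{1-x}\, l\rangle$, and \autoref{lem_M^2} upgrades this to IT(0) of $M_L^{\otimes m}\langle t\, l\rangle$ whenever $m\geq 2$. For the second factor, \autoref{ex_line bundle} says that $B\langle -t\, l\rangle$ is IT(0) when $B-tL$ is ample and GV when $B-tL$ is nef. These observations combine cleanly in the two cases of the hypothesis: if $m\geq 2$ and $B-tL$ is nef, the product is IT(0) $\otimes$ GV $=$ IT(0); if $B-tL$ is ample (and $m\geq 1$ is arbitrary), the product is (M-regular, hence GV) $\otimes$ IT(0) $=$ IT(0). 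Either way $M_L^{\otimes m}\otimes B$ is IT(0), hence $h^i(M_L^{\otimes m}\otimes B)=0$ for every $i\geq 1$.

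The ``in particular'' statement is then a numerical specialization: plugging $x=\tfrac{1}{p+2}$ yields $t=\tfrac{m}{p+1}$, and for $B=L^h$ the difference $B-tL$ is ample iff $h>m/(p+1)$ and nef iff $h\geq m/(p+1)$, matching the stated conditions. No serious obstacle appears; the whole proposition is packaged cleanly by the $\Q$-twisted formalism, and the only subtlety worth flagging is the asymmetry between $m=1$ (where only M-regularity of the $M_L$-side is available, forcing ampleness on the other side) and $m\geq 2$ (where IT(0) of the $M_L^{\otimes m}$-side, provided by \autoref{lem_M^2}, lets us weaken ampleness to nefness).
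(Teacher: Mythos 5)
Your proposal is correct and follows essentially the same route as the paper: the same decomposition of $M_L^{\otimes m}\otimes B$ into the $\Q$-twisted factors $M_L^{\otimes m}\langle \frac{mx}{1-x}l\rangle$ and $B\langle -\frac{mx}{1-x}l\rangle$, the same use of \autoref{lemma_I_o_to_M_L}, \autoref{lem_M^2}, \autoref{ex_line bundle}, and \autoref{prop_GV+IT(0)}, and the same case split between the ample and the ($m\geq 2$, nef) hypotheses. The only cosmetic difference is that the paper keeps the $m$ M-regular factors $M_L\langle\frac{x}{1-x}l\rangle$ separate rather than first assembling them into a single M-regular (or IT(0)) sheaf, which changes nothing.
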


\begin{proof}
The proof is essentially the same as that of \cite[Proposition 3.5]{MR4114062}
other than we use \autoref{lem_M^2} when $m \geq 2$.

As a $\Q$-twisted sheaf,
$M_L^{\otimes m} \otimes B$ is written as
\[
\left(M_L  \left \langle \frac{x}{1-x} l \right \rangle \right)^{\otimes m} \otimes B \left \langle - \frac{mx}{1-x} l \right \rangle.
\]
By \autoref{lemma_I_o_to_M_L},
$M_L   \langle \frac{x}{1-x} l \rangle$ is M-regular.

If $ B - \frac{mx}{1-x}L$ is ample, $B \langle - \frac{mx}{1-x} l \rangle$ is IT(0) by \autoref{ex_line bundle}.
Thus $M_L^{\otimes m} \otimes B$ is IT(0) by \autoref{prop_GV+IT(0)}
and hence the vanishing $h^i( M_L^{\otimes m} \otimes B)=0$ holds.

When $m \geq 2$ and $ B - \frac{mx}{1-x}L$ is nef,
$\left(M_L   \langle \frac{x}{1-x} l  \rangle \right)^{\otimes m}$ is IT(0) by \autoref{lem_M^2}
and $B \langle - \frac{mx}{1-x} l \rangle$ is GV by \autoref{ex_line bundle}.
Hence $M_L^{\otimes m} \otimes B$ is IT(0) by \autoref{prop_GV+IT(0)}
and the vanishing $h^i( M_L^{\otimes m} \otimes B)=0$ holds.

The last statement is just a special case when $x=1/(p+2) $ and $B=L^h$.
\end{proof}

\begin{proof}[Proof of \autoref{thm_M-regular_is_enough}]
(1) Since $\beta(l) \leq 1/(p+2) <1$, $L$ is basepoint free.
By \cite[Proposition 4.1]{MR4114062},
$(N_p)$ holds for $L$
if $ h^1(M_L^{\otimes p+1} \otimes L^h)=0$ for any $h \geq 1$ in $\Char(\K) \geq 0$.
Since $p+1 \geq 2$,
here we use the assumption $p \geq 1$, and $h \geq  1 =(p+1)/(p+1)$,
this vanishing follows from \autoref{thm_vanishing}.



For (2),
$(N_p^r)$ holds for $L$
if $ h^1(M_L^{\otimes p+1} \otimes L^h)=0$ for any $h \geq r+1$ by \cite[Proposition 6.3]{MR2008719} when $\mathrm{char} (\K)$ does not divide $p+1$ and
by \cite[Section 4]{MR4114062} in any characteristic.
We can prove (2) similarly.
\end{proof}

We give an example which does not follow from \autoref{thm_Pareschi-PopaII}, \autoref{thm_JP_Caucci}.

\begin{ex}\label{ex_(1,3,...,3)}
Let $(X,L)$ be a general polarized abelian variety of dimension $g \geq 2$ and of type $(1,3,\dots,3)$ in $\mathrm{char}(\K) =0$.
Then
$L^n$ satisfies $(N_p) $ 
if $ n \geq \frac{2(p+2)}{3}$.
To see this,
it suffices to show that $\cali_o \langle \frac23 l\rangle $ is M-regular by \autoref{thm_M-regular_is_enough}.
In fact,
we can show that $\cali_o \langle \frac23 l\rangle $ is M-regular  but not IT(0) as follows:

Take an isogeny $\pi : Y \rightarrow X$ with kernel $\pi^{-1}(o) \simeq \Z/3\Z$ such that there exists a principally polarization $\Theta$ 
such that $ \pi^* L \equiv 3 \Theta$.
Then  $\cali_o \langle \frac23 l\rangle $ is M-regular or IT(0) 
if and only if 
so is $\pi^* \cali_o \langle \frac23 \pi^* l \rangle =\cali_{\pi^{-1}(o)} \langle 2 \theta \rangle$ by \ref{eq_pullback}.
Hence it suffices to show that $ \cali_{\pi^{-1}(o)}  \otimes 2 \Theta$ is M-regular but not IT(0).
We may assume that $\Theta$ is symmetric.
Since $ h^i(  \cali_{\pi^{-1}(o) +p}  \otimes 2 \Theta) =0$ for any $p \in Y$ and $i \geq 2$,
it suffices to show that 
\[
V^1:=\{ p \in Y \, | \,  h^1(  \cali_{\pi^{-1}(o) +p}  \otimes 2 \Theta) > 0  \}  \subset Y
\]
is not empty and the codimension is greater than $1$.

Since $(X,L)$ is general, $(Y,\Theta)$ is indecomposable, i.e.\ $(Y,\Theta)$ is not a product of smaller dimensional principally polarized abelian varieties.
Hence $|2 \Theta|$ gives a double cover $f:  Y \rightarrow Y/(-1_Y) \subset \P^{2^g-1}$.
Thus $ p \in Y$ is contained in $V^1$ if and only if the restriction map
\[
H^0( Y,2 \Theta)=H^0(\P^{2^g-1},\calo(1)) \rightarrow 2 \Theta |_{ \pi^{-1}(o) +p}
\]
is not surjective
if and only if $f( \pi^{-1}(o) +p) $ is contained in a line in $\P^{2^g-1} $.
Hence we have
\[
V^1=\{ p \in Y \, | \, \text{$f( \pi^{-1}(o) +p) $ is contained in a line}  \}  .
\]
Let $\ep \in \pi^{-1}(o) \simeq \Z/3 \Z$ be a generator.
If $2 p  =\ep$ for $p \in Y$, 
then $ f(p) =f(-p) =f(-\ep +p) $ holds and hence $f( \pi^{-1}(o) +p) = \{ f(p), f(\ep+p), f(-\ep +p) \}$ is contained in a line.
Thus $ \mu_2^{-1} (\ep) $ is contained in $V^1$.
By a similar argument,
we have
\begin{align}\label{eq_V^1}
\mu_2^{-1} (\pi^{-1}(o) ) = \mu_2^{-1} (\{ o_Y, \ep, -\ep \}) \subset V^1.
\end{align}
In particular, $V^1$ is not empty and hence $\cali_o \langle \frac23 l\rangle $ is not IT(0).

By \cite[Theorem (0.5)]{MR769160},
$V^1$ is at most one dimensional.
If $g \geq 3$,
the codimension of $V^1 \subset Y$ is at least $g-1 \geq 2$
and hence $\cali_o \langle \frac23 l\rangle $ is M-regular.

When $g=2$,
we need to see $ \dim V^1 =0$ for the M-regularity of $\cali_o \langle \frac23 l\rangle $. 
Assume $ \dim V^1 \geq 1$.
Then 
$Y$ is the jacobian $J(C)$ of a smooth curve $C$ of genus two and
$\mu_2(V^1) \subset Y=J(C)$ is a theta divisor, i.e.\ 
$\mu_2(V^1)  $ is the image of $C$ by $a \in C \mapsto \calo_C(a) \otimes Q \in J(C)$ for a line bundle $Q$ of degree $-1$ on $C$ 
by \cite[Theorem (0.5)]{MR769160}.
On the other hand,
$\pi^{-1}(o)  = \{ o_Y, \ep, -\ep \}$ is contained in $\mu_2(V^1)$ by \ref{eq_V^1}.
Hence there exist $a,b,c \in C$ such that 
\[
\calo_C(a) \otimes Q =o_Y, \quad  \calo_C(b) \otimes Q  =\ep,  \quad  \calo_C(c) \otimes Q  = -\ep.
\]
Thus $\ep = \calo_C(b-a), -\ep = \calo_C(c-a)$ 
and hence $ \calo_C(b+c) \simeq  \calo_C(2a)$.
Since $b \neq a, c \neq a$ by $\ep \neq o_Y$,
$ \calo_C(2a)$ is basepoint free of degree two on a curve $C$ of genus two.
Thus $ \calo_C(2a)$ is linearly equivalent to the canonical line bundle $\omega_C$.

Furthermore, we also have $ \calo_C(c+a) \simeq  \calo_C(2b) $ by $ \calo_C(b+c) \simeq  \calo_C(2a)$ and $o_Y=3\ep =\calo_C(3b-3a)$.
Thus $ \calo_C(2b) $ is also basepoint free of degree two, and hence $ \calo_C(2b) \simeq \omega_C$.
Then 
\[
 \calo_C(2b) \simeq \omega_C \simeq  \calo_C(2a) \simeq  \calo_C(b+c),
\]
which implies $ b=c$.
This contradicts $\ep = \calo_C(b-a) \neq -\ep = \calo_C(c-a)$.
Hence $  V^1$ cannot be one dimensional and we have the the M-regularity of $\cali_o \langle \frac23 l\rangle $.
\end{ex}

\begin{rem}\label{rem_etc}
(1)
In \cite[Conjecture 6.4]{MR2008719},
it is conjectured that 
if $A$ is ample, $m(A) \geq m$, and $p \geq m$, then $A^n$ satisfies $(N_p) $ for any $n \geq p+3-m$.
The case $m=0$ is nothing but Lazarsfeld's conjecture 
and the case $m=1$ is nothing but \autoref{thm_Pareschi-PopaII}.
If we want to apply \autoref{thm_M-regular_is_enough},
we need to show the M-regularity of $\cali_o \langle \frac{p+3-m}{p+2} a \rangle $.
Since $m(A) \geq m$ and $p \geq m$, it is enough to see the M-regularity of $\cali_o \langle \frac{3}{m(A)+2} a \rangle $.
However, the author does not know this holds or not in general.

\vspace{1mm}
\noindent
(2)
Let $(X,L)$ be a general polarized abelian variety of dimension $g$ and of type $(1,\dots,1,d)$.
In \cite{Ito:2020ab}, the author shows that for an integer $m \geq 1$,
$ \cali_o \langle  \frac{1}{m} l \rangle$ is GV if and only if $ d \geq m^g$
and IT(0) if $ d \geq m^g+ m^{g-1} + \dots +m+1$.

On the other hand,
$L$ has no base divisor if $d > 1=1^g$,
and $L$ is very ample if $d  > 2^g$ by \cite[Remark 3, Corollary 25]{MR1299059},
and $L $ satisfies $(N_1)$ if $ d > 9=3^2$ when $g=2$ by \cite{MR1602020}.
Hence it might be natural to guess that 
$ \cali_o \langle  \frac{1}{m} l \rangle$ is M-regular if $ d > m^g$.
If this is true,
we can recover the above results in \cite{MR1299059}, \cite{MR1602020}
from Theorems \ref{thm_M-regular_is_enough}, \ref{thm_jet-ample}.

We also note that
$ \cali_o \langle  \frac{1}{m} l \rangle$ is not M-regular if $ d \leq  m^g$.
More generally,
$\cali_o \langle x l  \rangle$ is not M-regular if $ x \leq 1/ \sqrt[g]{\chi(l)} $ for any polarized abelian variety $(X,L)$.
In fact, we have an inequality 
\[
h^1_{\cali_o}(yl) \geq h^0_{\calo_X/\cali_o} (yl) - h^0_{\calo_X} (yl) = 1- \chi(l) y^g
\]
for $y \in \Q_{>0}$
by the exact sequence $0 \rightarrow \cali_o \rightarrow \calo_X \rightarrow \calo_X/\cali_o \rightarrow 0$ (see \cite[Section 2.3]{jiang2020cohomological}).
By this inequality, it is easy to see that $ h^1_{\cali_o}((x-t)l) \neq O(t^2)$ for sufficiently small $t \in \Q_{>0}$ if $ x \leq 1/ \sqrt[g]{\chi(l)} $
and hence $\cali_o \langle x l  \rangle$ is not M-regular by \ref{eq_h^i}.
\end{rem}

\section{On Projective normality}\label{sec_proj_normal}


Contrary to Lazarsfeld's conjecture,
the statement of \autoref{thm_Pareschi-PopaII} does not hold for $p=0$,
that is, $A^2$ might not be projectively normal even if $A$ has no base divisor as in the following example.
Hence the statement of \autoref{thm_M-regular_is_enough} does not hold for $p=0$ as well.
Equivalently, the M-regularity of $\cali_o \langle \frac12 l \rangle$ does not imply the projective normality of $L$ in general.

\begin{ex}\label{ex_not_proj_normal}
If 
$A$ is a symmetric ample line bundle on an abelian variety $X$,
$A^2$ is projectively normal if and only if 
the origin $o \in X$ is not contained in the base locus of $ |A \otimes P|$
for any line bundle $P$ on $X$ with $P^2 \simeq \calo_X$ by \cite{MR966402} in $\Char(\K) \neq 2$ and  \cite{MR1433226} in $\Char(\K) \geq 0$.
Thus $A^2$ might not satisfy $(N_0) $ even if $A$ has no base divisor. 
More explicitly,
if $(X,A) $ is a general polarized abelian surface of type $(1,2)$,
then $A $ has no base divisor but
$A^2$ is not projectively normal by \cite[Lemma 6]{MR1106182}.
\end{ex}



In \cite[\S 5]{MR2008719},
the authors give an alternative proof of the above result in \cite{MR966402} in $\Char(\K) \neq 2$ using the theory of M-regularity.
For general $L$, which is not necessarily written as $A^2$, we can show the following lemma:

\begin{lem}\label{lem_proj_normal}
Assume $\Char(\K) \neq 2$ and
let $L$ be an ample line bundle on an abelian variety $X$.
Then the following are equivalent:
\begin{itemize}
\item[(i)] $L$ is projectively normal,
\item[(ii)] $L \otimes {\mu_2}_* (L^{-2})$ is globally generated at the origin $o \in X$,
\item[(iii)] Let $X_2:=\mu_2^{-1}(o)$ be the set of two torsion points in $X$ and 
$f : X \rightarrow \P^N$ be the morphism defined by $|\mu_2^* L \otimes L^{-2}| $.
Then $f(X_2) \subset \P^N$ spans a linear subspace of dimension $\# X_2 -1=4^g-1$.
\end{itemize}

\end{lem}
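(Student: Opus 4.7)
The plan is to prove the two equivalences (ii) $\Leftrightarrow$ (iii) and (i) $\Leftrightarrow$ (ii) in turn.

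For (ii) $\Leftrightarrow$ (iii), I apply the projection formula to obtain
\[
L \otimes {\mu_2}_*(L^{-2}) \simeq {\mu_2}_*(\mu_2^* L \otimes L^{-2}),
\]
whose global sections are $H^0(X, \mu_2^* L \otimes L^{-2})$ and whose stalk at $o$ decomposes as $\bigoplus_{\alpha \in X_2}(\mu_2^* L \otimes L^{-2})|_\alpha$ (via étaleness of $\mu_2$ in $\Char(\K) \neq 2$ and $\mu_2^{-1}(o) = X_2$). Global generation at $o$ therefore coincides with surjectivity of the evaluation map $H^0(\mu_2^* L \otimes L^{-2}) \to \bigoplus_{\alpha \in X_2}(\mu_2^* L \otimes L^{-2})|_\alpha$, which is exactly the condition that $f(X_2) \subset \P^N$ is linearly independent, spanning a subspace of dimension $|X_2| - 1 = 4^g - 1$.

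For (i) $\Leftrightarrow$ (ii), I plan to pass through the Pareschi--Popa Pontryagin-product reformulation. By a Kempf--Mumford type analysis, projective normality of $L$ (for $L$ ample and basepoint free, with basepoint freeness following from any of the three conditions by a section-count argument) is equivalent to the surjectivity of the twisted first multiplication maps $H^0(L \otimes P) \otimes H^0(L) \to H^0(L^2 \otimes P)$ for every $P \in \hat{X}_2$: the reduction to these 2-torsion-twisted $n=1$ multiplications uses vanishing of $H^1(M_L \otimes L^n)$ for $n \geq 2$, which follows from GV-ness of $M_L \otimes L$ (obtained via \autoref{lemma_I_o_to_M_L} and \autoref{ex_line bundle2}) together with \autoref{prop_GV+IT(0)}. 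Now by \cite[Proposition 1.1 (ii)]{MR1758758}, each such twisted surjectivity is equivalent to global generation of $L \hat{*} L$ at the corresponding 2-torsion point; via the étale--Galois decomposition
\[
H^0(\mu_2^*(L \hat{*} L)) = \bigoplus_{P \in \hat{X}_2} H^0((L \hat{*} L) \otimes P)
\]
under the Galois action of $X_2$, the collection of conditions amounts to $\mu_2^*(L \hat{*} L)$ being globally generated on the subscheme $X_2$. Combining \autoref{prop_Pontrjagin}~(iii) with the relations $\phi_{l^2} = \phi_l \circ \mu_2$ and $\phi_{l^2}^* \Phi_{\calp}(L^2) \simeq H^0(L^2) \otimes L^{-2}$ yields
\[
\mu_2^*(L \hat{*} L) \simeq H^0(L^2) \otimes (\mu_2^* L \otimes L^{-2}),
\]
so (since $H^0(L^2)$ is a non-zero constant factor) global generation at $X_2$ of this pullback is equivalent to global generation of $\mu_2^* L \otimes L^{-2}$ at $X_2$, which by the first paragraph is (ii).

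The main technical difficulty is carrying out the Kempf--Mumford reduction of full projective normality to the 2-torsion-twisted first-multiplication conditions, especially the étale--Galois comparison between global generation at $o$ of each twist $L \hat{*} L \otimes P$ and global generation of $\mu_2^*(L \hat{*} L)$ on the subscheme $X_2$. The appearance of $\Pic^0$-twists when $L$ is non-symmetric is inessential, since global generation at a fixed $0$-dimensional subscheme is invariant under $\Pic^0$-twists.
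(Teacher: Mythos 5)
Your proof of (ii) $\Leftrightarrow$ (iii) is the same as the paper's (projection formula plus identification of the two evaluation maps), and is fine. The problems are in (i) $\Leftrightarrow$ (ii), and they are genuine. The opening reduction --- that projective normality of $L$ is equivalent to surjectivity of $H^0(L\otimes P)\otimes H^0(L)\to H^0(L^2\otimes P)$ for \emph{every} $2$-torsion $P \in \dX$ --- is asserted rather than proved, and the nontrivial direction (projective normality implies all the twisted surjectivities) is essentially the content of the lemma itself; no Kempf--Mumford-type argument produces it. Its supporting claim is also false: by \autoref{lemma_I_o_to_M_L}, $M_L\otimes L$ being GV is equivalent to $\cali_o\langle \frac12 l\rangle$ being GV, i.e.\ to $\beta(l)\leq 1/2$, which fails e.g.\ for a general polarized surface of type $(1,3)$; \autoref{ex_line bundle2} only gives that $\cali_o\langle l\rangle$ is GV, which is the case $x=1$ and says nothing about $M_L\otimes L$. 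What \cite[Proposition 2.1]{MR1974682} actually provides --- and what the paper uses --- is that projective normality is equivalent to surjectivity of the \emph{single} untwisted map $H^0(L)\otimes H^0(L)\to H^0(L^2)$, hence, by \autoref{prop_Pontrjagin} (ii), to global generation of $L\hat{*}L$ at the one point $o$.

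Second, even granting those twisted conditions, the ``\'etale--Galois'' step conflates two different families of statements. \autoref{prop_Pontrjagin} (ii) converts surjectivity of $H^0(t_x^*L)\otimes H^0(L)\to H^0(t_x^*L\otimes L)$ for $x\in X_2$ (where the twists occurring are $P=\phi_l(x)$, ranging only over $\phi_l(X_2)$, in general a \emph{proper} subgroup of the $2$-torsion of $\dX$) into global generation of the single sheaf $L\hat{*}L$ at the points of $X_2$ --- a condition on $H^0(L\hat{*}L)$ alone. By contrast, the isotypic decomposition of $H^0(\mu_2^*(L\hat{*}L))\to \mu_2^*(L\hat{*}L)|_{X_2}$ under the $X_2$-action gives global generation of $(L\hat{*}L)\otimes P_\alpha$ at the single point $o$, one condition for each $2$-torsion $\alpha$. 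These do not coincide, and your closing assertion that global generation at a fixed $0$-dimensional subscheme is invariant under $\Pic^0$-twists is false --- that failure is precisely what Ohbuchi's criterion detects. The paper sidesteps all of this: setting $\cale=\phi_l^*\Phi_{\calp}(L^2)$ and $\calf={\mu_2}_*(L^{-2})$, it proves ${\mu_2}_*\mu_2^*\cale\simeq\calf^{\oplus h^0(L^2)}$ and $\phi_l^*{\phi_l}_*\calf\simeq\cale^{\oplus h^0(L^2)}$; since $\Char(\K)\neq 2$ makes $\cale$ a direct summand of ${\mu_2}_*\mu_2^*\cale$ and the counit $\phi_l^*{\phi_l}_*\calf\to\calf$ is surjective, global generation at $o$ passes between $L\otimes\calf$ and $L\otimes\cale=L\hat{*}L$ in both directions without ever decomposing either condition into $4^g$ pieces. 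To salvage your route you would have to prove directly that global generation of $L\hat{*}L$ at $o$ implies global generation of $(L\hat{*}L)\otimes P_\alpha$ at $o$ for every $2$-torsion $\alpha$, which is not easier than the lemma.
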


\begin{proof}
(ii) $\Leftrightarrow $ (iii): 
We note that the line bundle $\mu_2^* L \otimes L^{-2}$ is basepoint free since $\mu_2^* L \otimes L^{-2} \equiv L^2$.
Since $ L \otimes {\mu_2}_* (L^{-2}) ={\mu_2}_* (\mu_2^* L \otimes L^{-2})$ by projection formula,
the natural map
\begin{align}\label{eq_restriction}
H^0(L \otimes {\mu_2}_* (L^{-2})  ) \rightarrow L \otimes {\mu_2}_* (L^{-2}) \otimes \calo_X/\cali_o
\end{align}
can be identified with the natural map
\begin{align}\label{eq_restriction2}
H^0(\mu_2^* L \otimes L^{-2}) \rightarrow \mu_2^* L \otimes L^{-2} \otimes \calo_X/\mu_2^*\cali_o =\mu_2^* L \otimes L^{-2} \otimes \calo_X/\cali_{X_2}.
\end{align}
Since (ii) and (iii) are equivalent to the surjectivity of \ref{eq_restriction} and \ref{eq_restriction2} respectively,
we have the equivalence (ii) $\Leftrightarrow $ (iii).

\vspace{1mm}
\noindent
(i) $\Leftrightarrow $ (ii): 
By \cite[Proposition 2.1]{MR1974682},
$L$ is projectively normal if and only if the natural map $H^0(L) \otimes H^0(L) \rightarrow H^0(L^2)$ is surjective.
We note that the proof of \cite[Proposition 2.1]{MR1974682} works in any characteristic.
Hence $L$ is projectively normal if and only if $L \hat{*} L = L \otimes \phi_l^* \Phi_{\calp}(L^2)$ is globally generated at $o$ by \autoref{prop_Pontrjagin}.

For simplicity, set $\cale=\phi_l^* \Phi_{\calp}(L^2)$ and $ \calf= {\mu_2}_* (L^{-2})$.
Since $\phi_l \circ \mu_2 =\phi_{2l} $, we have
\[
{\mu_2}_* \mu_2^* \cale = {\mu_2}_* \mu_2^* \phi_l^* \Phi_{\calp}(L^2) 
= {\mu_2}_*  \phi_{2l}^* \Phi_{\calp}(L^2) =  {\mu_2}_*  (L^{-2})^{\oplus h^0(L^2)} = \calf ^{\oplus h^0(L^2)},
\]
where the third equality follows from \cite[Proposition 3.11 (1)]{MR607081}.
Since we assume $\Char(\K) \neq 2$, 
$\cale$ is a direct summand of ${\mu_2}_* \mu_2^* \cale =\calf ^{\oplus h^0(L^2)}$.
Hence $L \hat{*} L = L \otimes \cale$ is globally generated at $o$ if so is $L \otimes \calf =L \otimes {\mu_2}_* (L^{-2}) $,
which shows (ii) $\Rightarrow $ (i).

On the other hand, we have
\[
{\phi_l}^* {\phi_l}_* \calf = {\phi_l}^* {\phi_l}_* {\mu_2}_* (L^{-2}) = {\phi_l}^* {\phi_{2l}}_*  (L^{-2})  
=   {\phi_l}^* ( \Phi_{\calp}(L^2))^{\oplus h^0(L^2)} = \cale^{\oplus h^0(L^2)} ,
\]
where the third equality follows from \cite[Proposition 3.11 (2)]{MR607081}.
Since the natural homomorphism $ {\phi_l}^* {\phi_l}_* \calf \rightarrow \calf$ is surjective,
$L \otimes \calf =L \otimes {\mu_2}_* (L^{-2}) $ is globally generated at $o$ if so is $L \hat{*} L = L \otimes \cale$,
which shows (i) $\Rightarrow $ (ii).
\end{proof}

\begin{rem}
(1)
If $L=A^2$ for a symmetric $A$, we have $\mu_2^* A= A^{4}$ and hence
\[
L \otimes {\mu_2}_* (L^{-2}) =  {\mu_2}_* ( \mu_2^* L \otimes L^{-2}) ={\mu_2}_* ( \mu_2^* A^2 \otimes A^{-4}) 
= {\mu_2}_*  {\mu_2}^* A =\bigoplus_{P} A \otimes P
\]
in $\Char(\K) \neq 2$,
where we take the direct sum of all $P$ with $P^2 \simeq O_X$. 
Thus \autoref{lem_proj_normal} recovers Ohbuchi's result when $\Char(\K) \neq 2$.

\vspace{1mm}
\noindent
(2)
By \autoref{thm_JP_Caucci}, \autoref{rem_proj_normal_beta},
$L$ is projectively normal if $\beta(l) <1/2$
and is not projectively normal if $\beta(l) > 1/2$.
Hence the projective normality of $L$ is determined by $\beta(l)$ when $\beta(l) \neq 1/2$. 
It might be interesting to find examples with $\beta(l) =1/2$ to which we can apply \autoref{lem_proj_normal} to show the projective normality,
other than the case $L=A^2$.
We note that we cannot use \autoref{thm_criterion} (1) to show the globally generation of $L \otimes {\mu_2}_* (L^{-2})$ at $o$ when $\beta(l)=1/2$.
In fact,
\[
L \otimes {\mu_2}_* (L^{-2}) \left\langle  -\frac12 l \right\rangle=  {\mu_2}_* ( \mu_2^* L \otimes L^{-2})  \left\langle  -\frac12 l \right\rangle
\]
is not M-regular by \autoref{lem_pushforward}
since $  \mu_2^* L \otimes L^{-2}   \left\langle  -2 l \right\rangle =L^2     \left\langle  -2 l \right\rangle $ is not M-regular.

\vspace{1mm}
\noindent
(3)
Finally,
we summarize relations between projective normality and related notions:
\begin{itemize}
\item[(a)] $\cali_o \langle \frac12 l \rangle$ is IT(0), i.e.\ $\beta(l) < \frac12$,
\item[(b)] $\cali_o \langle \frac12 l \rangle$ is M-regular,
\item[(c)] $\cali_o \langle \frac12 l \rangle$ is GV, i.e.\ $\beta(l) \leq \frac12$,
\item[(d)] $L$ is projectively normal,
\item[(e)] $L$ is very ample.
\end{itemize}
For these five notions,
we have the following relations:

\begin{figure}[htbp]
  \begin{center}
    \includegraphics[scale=0.6]{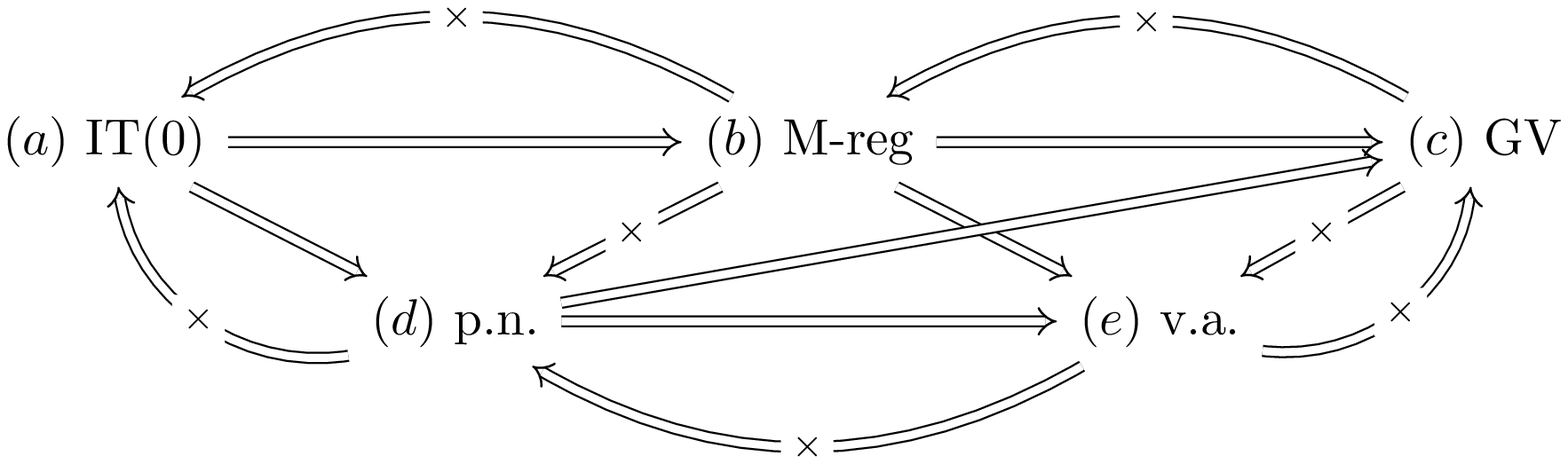}
  \end{center}
\end{figure}

In fact, 
(a) $\Rightarrow$ (b)  $\Rightarrow$ (c) and (d) $\Rightarrow$ (e) hold by definition.
It is easy to see that the converses of these implications do not hold in general by considering suitable $L=A^2$.

(a) $\Rightarrow$ (d), (b) $\Rightarrow$ (e), and (d) $\Rightarrow$ (c) follow from 
\autoref{thm_JP_Caucci}, \autoref{thm_jet-ample}, and \autoref{rem_proj_normal_beta}, respectively

On the other hand,
(b) $\Rightarrow$ (d) and (d) $\Rightarrow$ (a) do not hold in general by \autoref{ex_not_proj_normal} and \autoref{rem_beta=1/2}, respectively.
(c) $\Rightarrow$ (e) does not hold in general since  $ L=A^2$ for a principal polarization $A$ satisfies (c) but is not very ample. 
(e) $\Rightarrow$ (c) also does not hold in general 
since a general polarized abelian $4$-fold $(X,L)$ of type $(1,1,1,15) $ is very ample by \cite[Remark 26]{MR1299059} 
but $\beta(l) \geq 1/\sqrt[4]{\chi(l)} =1/\sqrt[4]{15} > 1/2 $ by \cite[Lemma 3.4]{Ito:2020aa}

The author does not know whether (d) $\Rightarrow$ (b) 
holds or not in general.
\end{rem}

\bibliographystyle{amsalpha}
\providecommand{\bysame}{\leavevmode\hbox to3em{\hrulefill}\thinspace}
\providecommand{\MR}{\relax\ifhmode\unskip\space\fi MR }
\providecommand{\MRhref}[2]{%
  \href{http://www.ams.org/mathscinet-getitem?mr=#1}{#2}
}
\providecommand{\href}[2]{#2}

\end{document}